 \patchcmd\Gread@eps{\@inputcheck#1 }{\@inputcheck"#1"\relax}{}{}
\numberwithin{equation}{section}
\newtheorem{thm}{Theorem}[section]
\newtheorem{lem}[thm]{Lemma}
\newtheorem{prop}[thm]{Proposition}
\theoremstyle{definition}
\newtheorem{dfn}[thm]{Definition}
\theoremstyle{remark}
\newtheorem*{rmk}{Remark}
\newcommand{\reDeclareMathOperator}[2]{\let#1\undefined \DeclareMathOperator{#1}{#2}}
\reDeclareMathOperator{\mod}{mod}
\DeclareMathOperator{\dom}{dom}
\reDeclareMathOperator{\supp}{supp}
\DeclareMathOperator*{\argmin}{arg\,min}
\DeclareMathOperator{\prox}{prox}
\reDeclareMathOperator{\Proj}{Proj}
\DeclareMathOperator{\gr}{gr}
\DeclareMathOperator{\Fix}{Fix}
\renewcommand{\emptyset}{\varnothing}
\newcommand{\CAT}{\rm CAT}
\newcommand{\0}{\mathbf{0}}
\newcommand{\R}{\mathbb{R}}
\newcommand{\N}{\mathbb{N}}
\renewcommand{\to}{\rightarrow}
\newcommand{\tendsto}{\longrightarrow}
\newcommand{\norm}[1]{\|#1\|}
\newcommand{\abs}[1]{\left|#1\right|}
\newcommand{\product}[2]{\langle#1,#2\rangle}
\renewcommand{\multimap}{\rightrightarrows}
\renewcommand{\angle}{\measuredangle}
\renewcommand{\tilde}{\widetilde}
\renewcommand{\vec}[1]{\overrightarrow{#1}}
\renewcommand*{\@fnsymbol}[1]{\ifcase#1\or*\else\@arabic{#1}\fi}
\title{Monotone vector fields and generation of nonexpansive semigroups in complete $\rm CAT(0)$ spaces}
\author[1]{Parin Chaipunya\thanks{Corresponding author.}$^{,}$}
\author[2]{Fumiaki Kohsaka}
\author[1]{Poom Kumam}
\affil[1]{
Department of Mathematics, Faculty of Science,\break
King Mongkut's University of Technology Thonburi, \break
126 Pracha Uthit Rd., Bang Mod, Thung Khru,\break
Bangkok 10140, Thailand.\break
Emails: parin.cha@mail.kmutt.ac.th (P. Chaipunya),\break poom.kum@kmutt.ac.th (P. Kumam).
\vspace{.25cm}
}
\affil[2]{
Department of Mathematical Science,\break
Tokai University,\break
4-1-1 Kitakaname, Hiratsuka,\break
Kanagawa 259-1292, Japan.\break
Email: f-kohsaka@tsc.u-tokai.ac.jp (F. Kohsaka).
}
\date{}
\begin{document}
\maketitle \vspace{-1.2cm}
\thispagestyle{empty}
\begin{abstract}

In this paper, we discuss about monotone vector fields, which is a typical extension to the theory of convex functions, by exploiting the tangent space structure. This new approach to monotonicity in $\CAT(0)$ spaces stands in opposed to the monotonicity defined earlier in $\CAT(0)$ spaces by Khatibzadeh and Ranjbar \cite{MR3679017} and Chaipunya and Kumam \cite{MR3691338}. In particular, this new concept extends the theory from both Hilbert spaces and Hadamard manifolds, while the known concept barely has any obvious relationship to the theory in Hadamard manifolds. We also study the corresponding resolvents and Yosida approximations of a given monotone vector field and derive many of their important properties. Finally, we prove a generation theorem by showing convergence of an exponential formula applied to resolvents of a monotone vector field. Our findings improve several known results in the literature including generation theorems of Jost \cite[Theorem 1.3.13]{MR1652278}, Mayer \cite[Theorem 1.13]{MR1651416}, Stojkovic \cite[Theorem 2.18]{MR2879568}, and Ba\v{c}\'{a}k \cite[Theorem 1.5]{MR3047087} for proper, convex, lower semicontinuous functions in the context of complete $\CAT(0)$ spaces, and also by Iwamiya and Okochi \cite[Theorem 4.1]{MR1979730} for monotone vector fields in the context of Hadamard manifolds.

\medskip

\noindent{\bf Keywords:} Monotone vector field, Nonexpansive semigroup, Resolvent, Yosida approximation, Tangent space, $\CAT(0)$ space.

\noindent{\bf 2010 MSC:} 90C33, 65K15, 49J40, 49M30, 47H05.
\end{abstract}


\section{Introduction}


Convex functions in $\CAT(0)$ spaces were first considered during the 1990s. In particular, Jost \cite{MR1360608} and Mayer \cite{MR1651416} independently studied the proximal operators of convex functions in complete $\CAT(0)$ and applied them to investigate harmonic functions and also gradient flows. The proximal operators were later used by Ba\v{c}\'ak \cite{MR3047087} to study proximal algorithms for minimizing convex functions. He also proved a semigroup generation theorem from the exponential formula applied to these proximal operators. Further explanations can also be found in \cite{MR3241330}. In 2017, Khatibzadeh and Ranjbar \cite{MR3679017} as well as Chaipunya and Kumam \cite{MR3691338} generalized the results of Ba\v{c}\'ak \cite{MR3047087} by introducing the monotone operators and examined proximal algorithms using the dual space concept introduced earlier by Ahmadi Kakavandi and Amini \cite{MR2680038} (also see \cite{MR3003694}). The dual space used in \cite{MR2680038} is known to generalize the usual dual space when the linear structure is provided. However, it is still unknown (as also posted in \cite{MR2680038}) what relationship this dual space has with the Riemannian tangent spaces when the space in question is an Hadamard manifold. Consequently, the monotone operators introduced in this aspect barely have obvious or simple relationships with monotone vector fields on Hadamard manifolds as introduced by N\'emeth \cite{MR1694468} and later enriched by Li et al. \cite{MR2506692}.

In this present paper, we adopt a new approach to generalize and unify the concept of monotonicity into complete $\CAT(0)$ spaces and ultimately prove a semigroup generation theorem. Instead of applying the dual space of \cite{MR2680038}, we make use of the tangent spaces (also called tangent cones) of Nikolaev \cite{MR1317739} which is consistent with the contexts of Hilbert spaces or Hadamard manifolds. With this nice attribute of tangent spaces, the concept of monotone vector fields introduced in this paper is a natural complement to the theory in Hilbert spaces as well as in Hadamard manifolds. In addition to the main results, we introduce and deduce several fundamental properties of resolvents and Yosida approximations which become the major machinery in proving the generation theorem and are also important in their own rights. 

Note that the convergence procedure applied in our generation theorem (Theorem \ref{thm:S(t)x}) was used by Jost \cite[Theorem 1.3.13]{MR1652278}, Mayer \cite[Theorem 1.13]{MR1651416}, Stojkovic \cite[Theorem 2.18]{MR2879568}, and Ba\v{c}\'{a}k \cite[Theorem 1.5]{MR3047087} for proper, convex, lower semicontinuous functions in the context of complete $\CAT(0)$ spaces, and also by Iwamiya and Okochi \cite[Theorem 4.1]{MR1979730} for monotone vector fields in the context of Hilbert manifolds. Our generation theorem elevates the abovementioned results of \cite{MR1652278,MR1651416,MR3047087} to monotone vector fields and also improves that of \cite{MR1979730} when reduced to an Hadamard manifold, taken into account the equivalent formulation studied by Wang et al. \cite[Corollary 3.8]{MR2720608}.

The rest of this paper is organized as follows. In Section 2, we collect background materials which are required for our main results in subsequent sections. Especially, the definitions and properties of a $\CAT(0)$ space and its tangent spaces are given here. Moreover, several useful inequalities and estimates are also derived thereof. In Section 3, we give the definition of a monotone vector field and provide fundamental observations accordingly. Moreover, we investigate a convex subdifferential as an example for a monotone vector field. In Section 4, the notion of resolvents for a monotone vector field is given and several tools are obtained. We then further construct the Yosida approximations, which will be used as a central equipment in the generation theorem. Important results for this section include the resolvent identity, the convexity of closed domains for monotone vector fields, asymptotic convergence on both ends for the resolvent operators, and the Yosida approximation estimate. The final Section 5, the generation theorem is proved and a simple convergence for the corresponding trajectories are derived.

\section{Preliminaries}

Recall that a metric space $(X,d)$ is said to be \emph{geodesic} if for each two points $x,y \in X$, there exists a mapping $c : [0,\ell] \to X$ (with $\ell \geq 0$) and a constant $K \geq 0$ such that $c(0) = x$, $c(\ell) = y$, and $d(c(t),c(t')) = K\abs{t-t'}$ for any $t,t' \in [0,\ell]$. In this way, $c$ is called a \emph{geodesic} joining $x$ and $y$, and it is said to \emph{issue from} $x$ and \emph{end} at $y$. We say that $c$ is a \emph{zero geodesic} at $x$ if $x = y$, otherwise we say that it is \emph{nonzero}. Let us adopt the zero geodesic indicator $\zeta(c) := 0$ if $c$ is a zero geodesic and $\zeta(c) := 1$ otherwise. We say that $c$ is \emph{normalized} if $\ell = 1$ and that it is of \emph{unit speed} of $K = 1$. In the latter case, we also have $K = \rho(x,y)$. On the other hand, a mapping $c : \R \to X$ is called a \emph{geodesic line} if there exists a strictly positive constant $K > 0$ such that $\rho(c(t),c(t')) = K\abs{t-t'}$ for all $t,t' \in \R$.

In the sequel where the choice of geodesics is insignificant or where the geodesic is unique, we write $\gamma_{x,x}$ to denote the zero normalized geodesic at $x$, and we write $\gamma_{x,y}$ to denote the nonzero normalized geodesic joining $x$ and $y$ with $x \neq y$. We also write $\llbracket x,y \rrbracket$ to denote the image of $\gamma_{x,y}$ over $[0,1]$.

Here and henceforth, let $(\mathbb{E}^{2},\product{\cdot}{\cdot})$ be the Euclidean plane with usual inner product $\product{u}{v} := u^{\top}v$ and the Euclidean norm $\norm{u} := \sqrt{\product{u}{u}}$, for $u,v \in \mathbb{E}^{2}$. For each points $p,q,r \in X$, the geodesic triangle $\Delta \subset X$ is defined by $\Delta(pqr) := \llbracket p,q \rrbracket \cup \llbracket q,r \rrbracket \cup \llbracket r,p \rrbracket$. The triangle defined by $\overline{\Delta}(\bar{p}\bar{q}\bar{r}) := \Delta(\bar{p}\bar{q}\bar{r})$ with $\bar{p},\bar{q},\bar{r} \in \mathbb{E}^{2}$ is said to be a \emph{Euclidean comparison triangle}, or simply a \emph{comparison triangle}, if $\norm{\bar{p} - \bar{q}} = d(p,q)$, $\norm{\bar{q} - \bar{r}} = d(q,r)$, and $\norm{\bar{r} - \bar{p}} = d(r,p)$. Note that the triangle inequality of $d$ implies the existence of such comparison triangle. Moreover, the comparison triangle of each geodesic triangle in $X$ is unique up to rigid motions. Suppose that $\Delta(pqr) \subset X$ is a geodesic triangle whose comparison triangle is $\overline{\Delta}(\bar{p}\bar{q}\bar{r})$. Given $u \in \llbracket p,q \rrbracket$, the point $\bar{u} \in \llbracket \bar{p},\bar{q} \rrbracket$ is said to be a \emph{comparison point} of $u$ if $\norm{\bar{p} - \bar{u}} = d(p,u)$. Comparison points for $u' \in \llbracket q,r \rrbracket$ and $u'' \in \llbracket r,p \rrbracket$ are defined likewise.

\begin{dfn}
A geodesic metric space $(X,d)$ is said to be a \emph{$\CAT(0)$ space} if for each geodesic triangle $\Delta \subset X$ and two points $u,v \in \Delta$, the following $\CAT(0)$ inequality holds:
\[
d(u,v) \leq \norm{\bar{u} - \bar{v}},
\]
where $\bar{u},\bar{v} \in \overline{\Delta}$ are the comparison points of $u$ and $v$, respectivly, and $\overline{\Delta} \subset \mathbb{E}^{2}$ is a comparison triangle of $\Delta$. A complete $\CAT(0)$ space is also called an \emph{Hadamard space}.
\end{dfn}

The following proposition gives useful characterizations of the $\CAT(0)$ inequality. 
\begin{prop}\label{Prop:CAT(0)Char}
Suppose that $(X,d)$ is a geodesic metric space. Then, the following conditions are equivalent:
\begin{enumerate}[label=\rm{(\roman*)}]
\item $X$ is a $\CAT(0)$ space.
\item\label{cdn:CN} For all $v \in X$ and a normalized geodesic $\gamma : [0,1] \to X$, the following \eqref{eqn:CN} inequality holds for any $t \in [0,1]$:
\[\label{eqn:CN}
d^{2}(\gamma(t),v) \leq (1-t)d^{2}(\gamma(0),v) + td^{2}(\gamma(1),v) - t(1-t)d^{2}(\gamma(0),\gamma(1)). \tag{CN}
\]
\item\label{cdn:quad-characterization-CAT(0)} For all $x,y,u,v \in X$, the following inequality holds:
\begin{equation}\label{eqn:quad-characterization-CAT(0)}
d^{2}(x,v) + d^{2}(y,u) \leq d^{2}(x,u) + d^{2}(y,v) + 2d(x,y)d(u,v).
\end{equation}
\end{enumerate}
\end{prop}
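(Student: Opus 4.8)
The plan is to prove the two substantive equivalences $(i)\Leftrightarrow(ii)$ and $(i)\Leftrightarrow(iii)$ separately, routing both through the definition of a $\CAT(0)$ space; a direct passage between \ref{cdn:CN} and \ref{cdn:quad-characterization-CAT(0)} does not appear to be any simpler, for the reason explained at the end. Throughout, $\bar{(\cdot)}$ denotes comparison points in $\mathbb{E}^{2}$, and the only external fact I rely on is that comparison triangles preserve the three side lengths.

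For $(i)\Rightarrow(ii)$ I would fix a normalized geodesic $\gamma$ and a point $v$, form the geodesic triangle on $\gamma(0),\gamma(1),v$, and take a comparison triangle in $\mathbb{E}^{2}$. The comparison point of $\gamma(t)$ is the affine combination $(1-t)\bar{\gamma}(0)+t\bar{\gamma}(1)$, so in the plane the identity
\[
\norm{\bar{\gamma}(t)-\bar v}^{2}=(1-t)\norm{\bar{\gamma}(0)-\bar v}^{2}+t\norm{\bar{\gamma}(1)-\bar v}^{2}-t(1-t)\norm{\bar{\gamma}(0)-\bar{\gamma}(1)}^{2}
\]
holds with equality. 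Combining it with the $\CAT(0)$ comparison $d(\gamma(t),v)\le\norm{\bar{\gamma}(t)-\bar v}$ and replacing each barred norm by the equal original distance yields \eqref{eqn:CN}. The converse $(ii)\Rightarrow(i)$ is the classical Bruhat--Tits observation that \eqref{eqn:CN} characterizes $\CAT(0)$: applied to the geodesic carrying one side of a triangle with $v$ the opposite vertex, its right-hand side is exactly the Euclidean quantity above, so \eqref{eqn:CN} is precisely the comparison for a vertex and a point on the opposite side. Upgrading this vertex-to-side comparison to the full comparison for two arbitrary points of the triangle is then a standard step via Alexandrov's lemma (join one point to the opposite vertex, apply the vertex-to-side estimate in the resulting subtriangle, and control the subtriangle's comparison distance by the original one).

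For the equivalence with \ref{cdn:quad-characterization-CAT(0)}, I would first rewrite \eqref{eqn:quad-characterization-CAT(0)} as $\tfrac12\big(d^{2}(x,v)+d^{2}(y,u)-d^{2}(x,u)-d^{2}(y,v)\big)\le d(x,y)\,d(u,v)$, which exhibits it as a Cauchy--Schwarz inequality for the quasilinear form: in $\mathbb{E}^{2}$ the left-hand side equals the genuine inner product $\product{\bar x-\bar y}{\bar u-\bar v}$, so the planar version is automatic. For $(i)\Rightarrow(iii)$ I would fix geodesics $\gamma_{x,y}$ and $\gamma_{u,v}$ and transfer the Euclidean Cauchy--Schwarz estimate through comparison triangles, using that Alexandrov angles in a $\CAT(0)$ space do not exceed the corresponding comparison angles; this bounds the cross term $\product{\bar x-\bar y}{\bar u-\bar v}$ by $d(x,y)\,d(u,v)$.

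The hard part is the converse $(iii)\Rightarrow(i)$, which is where essentially all the difficulty resides. The obstruction is structural: substituting the endpoints or midpoints of $\gamma_{x,y}$ and $\gamma_{u,v}$ into \eqref{eqn:quad-characterization-CAT(0)} only ever produces \emph{lower} bounds for the interior distances $d(\gamma(t),v)$, whereas \eqref{eqn:CN} demands an \emph{upper} bound; hence no finite combination of instances of \ref{cdn:quad-characterization-CAT(0)} yields \ref{cdn:CN} by direct algebra. Overcoming this requires the global quasilinearization argument of Berg and Nikolaev: one uses \eqref{eqn:quad-characterization-CAT(0)} to establish the existence and additivity of Alexandrov angles, and then reconstructs the comparison inequality for an arbitrary geodesic triangle. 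In the present preliminaries context I would carry out this reconstruction in outline but ultimately invoke it as the known quasilinearization characterization, since turning the pointwise Cauchy--Schwarz estimate back into curvature comparison is precisely the nontrivial content of that theorem.
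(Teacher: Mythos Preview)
The paper does not supply a proof of this proposition at all; it is stated in the preliminaries as a known characterization and then used without argument. So there is nothing in the paper to compare your proposal against.

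That said, your outline is sound and correctly locates the content: $(i)\Leftrightarrow(ii)$ is the classical Bruhat--Tits/CN characterization, and $(i)\Leftrightarrow(iii)$ is precisely the Berg--Nikolaev quasilinearization theorem, with the substantive difficulty sitting in $(iii)\Rightarrow(i)$ exactly as you say. One minor comment: your sketch of $(i)\Rightarrow(iii)$ via Alexandrov angles is vaguer than necessary. A cleaner route, and the one usually taken, is to go through $(ii)$: apply \eqref{eqn:CN} twice at the midpoint $m$ of $\llbracket u,v\rrbracket$ with the outer points $x$ and $y$, add, and use the triangle inequality $d(x,m)+d(m,y)\ge d(x,y)$ together with $2ab\le a^{2}+b^{2}$ to obtain \eqref{eqn:quad-characterization-CAT(0)} directly. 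This avoids any appeal to angles and keeps the forward direction entirely elementary, reserving the Berg--Nikolaev machinery for the converse where it is genuinely needed.
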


Unless otherwise specified, always assume that $(H,\rho)$ is a complete $\CAT(0)$ space. Note that $H$ is always uniquely geodesic and a subset $C \subset H$ is called \emph{convex} if $\llbracket x,y \rrbracket \subset C$ for all $x,y \in C$.

\begin{thm}[\cite{MR3241330}]\label{thm:projection}
Let $C \subset H$ be nonempty, closed, and convex, and $P_{C} : H \multimap C$ be a mapping defined by
\[
P_{C}(x) := \argmin_{y \in C} \rho(x,y) \quad (\forall x \in H).
\]
Then, the following assertions hold:
\begin{enumerate}[label=\rm{(\roman*)}]
\item $P_{C}$ is defined for all $x \in H$ and is single-valued.
\item\label{cdn:ProjIneq} If $x \in H$ and $z \in C$, then
\[
\rho^{2}(x,P_{C}(x)) + \rho^{2}(P_{C}(x),z) \leq \rho^{2}(x,z).
\]
\item\label{cdn:ProjNX} $P_{C} : H \to C$ is nonexpansive.
\end{enumerate}
\end{thm}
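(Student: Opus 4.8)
The plan is to handle the three assertions in turn, using the \eqref{eqn:CN} inequality as the principal tool for (i) and (ii) and the quadrilateral characterization \eqref{eqn:quad-characterization-CAT(0)} from Proposition \ref{Prop:CAT(0)Char} for (iii). For (i), I would write $\delta := \inf_{y \in C}\rho(x,y)$ and pick a minimizing sequence $(y_{n}) \subset C$ with $\rho(x,y_{n}) \to \delta$. To show $(y_{n})$ is Cauchy, I apply \eqref{eqn:CN} at $t = 1/2$ along the geodesic joining $y_{n}$ and $y_{m}$, whose midpoint $m_{n,m}$ lies in $C$ by convexity; this gives $\rho^{2}(m_{n,m},x) \leq \tfrac12\rho^{2}(y_{n},x) + \tfrac12\rho^{2}(y_{m},x) - \tfrac14\rho^{2}(y_{n},y_{m})$. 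Since $\rho(m_{n,m},x) \geq \delta$, I rearrange to bound $\tfrac14\rho^{2}(y_{n},y_{m})$ by a quantity tending to $0$ as $n,m \to \infty$. Completeness of $H$ then yields a limit, which lies in $C$ by closedness and attains $\delta$ by continuity of $\rho$, establishing existence. Uniqueness follows from the same midpoint estimate: if two points of $C$ both realize $\delta$, their midpoint forces $\rho^{2}(y_{1},y_{2}) \leq 0$.

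For (ii), I set $p := P_{C}(x)$ and fix $z \in C$. Along the geodesic $\gamma$ from $p$ to $z$, which stays in $C$ by convexity, I apply \eqref{eqn:CN} with $v = x$ to obtain $\rho^{2}(\gamma(t),x) \leq (1-t)\rho^{2}(p,x) + t\rho^{2}(z,x) - t(1-t)\rho^{2}(p,z)$. Because $p$ minimizes the distance to $x$ over $C$ and $\gamma(t) \in C$, the left-hand side is at least $\rho^{2}(p,x)$; cancelling and dividing by $t > 0$ leaves $\rho^{2}(p,x) \leq \rho^{2}(z,x) - (1-t)\rho^{2}(p,z)$, and letting $t \to 0^{+}$ produces $\rho^{2}(x,p) + \rho^{2}(p,z) \leq \rho^{2}(x,z)$, exactly the claimed inequality.

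For (iii), let $p_{i} := P_{C}(x_{i})$ for $i = 1,2$. Applying (ii) twice, once with $(x,z) = (x_{1},p_{2})$ and once with $(x,z) = (x_{2},p_{1})$, and adding yields $\rho^{2}(x_{1},p_{1}) + \rho^{2}(x_{2},p_{2}) + 2\rho^{2}(p_{1},p_{2}) \leq \rho^{2}(x_{1},p_{2}) + \rho^{2}(x_{2},p_{1})$. I then invoke \eqref{eqn:quad-characterization-CAT(0)} with $(x,y,u,v) = (x_{1},x_{2},p_{1},p_{2})$ to bound the right-hand side by $\rho^{2}(x_{1},p_{1}) + \rho^{2}(x_{2},p_{2}) + 2\rho(x_{1},x_{2})\rho(p_{1},p_{2})$. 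Cancelling the common terms leaves $\rho^{2}(p_{1},p_{2}) \leq \rho(x_{1},x_{2})\rho(p_{1},p_{2})$, and dividing by $\rho(p_{1},p_{2})$ (the case $p_{1} = p_{2}$ being trivial) gives nonexpansiveness.

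The only genuinely delicate point is the limit $t \to 0^{+}$ in (ii), which converts the minimality of $p$ into the quantitative inequality; everything else is a direct application of the stated characterizations. The combination in (iii) is where the two characterizations interact, but once the correct substitution into \eqref{eqn:quad-characterization-CAT(0)} is identified the cancellation is immediate, so I do not anticipate a substantive obstacle there.
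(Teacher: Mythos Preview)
Your argument is correct and is essentially the standard proof of these facts in $\CAT(0)$ spaces. Note, however, that the paper does not supply its own proof of this theorem: it is quoted from \cite{MR3241330} without proof, so there is no in-paper argument to compare against. What you have written is precisely the kind of proof one finds in that reference (the midpoint \eqref{eqn:CN} estimate for existence/uniqueness, the \eqref{eqn:CN} inequality along $\llbracket p,z\rrbracket$ followed by a limit $t\to 0^{+}$ for the variational inequality, and the combination of (ii) with \eqref{eqn:quad-characterization-CAT(0)} for nonexpansiveness), so no discrepancy arises.
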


\subsection{$\Delta$-Convergence}

If $(x^{k})$ is a bounded sequence in $H$, then the functional $H \ni x \mapsto \limsup_{k} \rho(x,x^{k})$ is finite and admits a unique minimizer \cite{MR2232680}. Such unique minimizer is called the \emph{asymptotic center} of $(x^{k})$. In this case, it is immediate that if $z = \argmin_{x \in H} \limsup_{k} \rho(x,x^{k})$, then the following \emph{Opial property} holds: $\limsup_{k} \rho(z,x^{k}) < \limsup_{k} \rho(u,x^{k})$ for all $u \in H \setminus\{z\}$. This asymptotic center is used in defining the so-called $\Delta$-convergence in the following definition.
\begin{dfn}[\cite{MR2416076}]
A bounded sequence $(x^{k})$ in $H$ is said to be \emph{$\Delta$-convergent} to a point $x \in H$ if $x$ is the (unique) asymptotic center of all subsequences of $(x^{k})$. In this case, $x$ is said to be the \emph{$\Delta$-sequential limit} of $(x^{k})$ \cite[Corollary 3.2.4]{MR3241330}.
\end{dfn}

We say a function $F : H \to \R$ is called \emph{$\Delta$-lower semicontinuous} at $\bar{x} \in H$ if
\[
F(\bar{x}) \geq \liminf_{k \tendsto \infty} F(x^{k})
\]
for any sequence $(x^{k})$ in $H$ with $\Delta$-sequential limit $\bar{x}$. In particular, the function $\rho(\cdot,x)$ is $\Delta$-lower semicountinuous on $H$ for each fixed $x \in H$.

Given any bounded sequence $(x^{k})$ in $H$, we write $\omega_{\Delta}(x^{k})$ to denote the set of all \emph{$\Delta$-subsequential limits} of $(x^{k})$, i.e., the set of all possible $\Delta$-sequential limits of subsequences of $(x^{k})$. In every $\CAT(0)$ spaces, such a set is nonempty for all bounded sequence \cite{MR3047087}. It is evident that strong convergence (i.e., convergence in metric) implies $\Delta$-convergence, but the converse implication is not true in general.
\begin{prop}\label{prop:singletonOmegaDelta}
Let $z \in H$ and $(x^{k})$ be a bounded sequence in $H$ with $\omega_{\Delta}(x^{k}) = \{z\}$. Then, $(x^{k})$ is $\Delta$-convergent to $z$.
\end{prop}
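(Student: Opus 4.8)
The plan is to prove $\Delta$-convergence straight from the definition, namely to show that $z$ is the asymptotic center of \emph{every} subsequence of $(x^{k})$. So I would fix an arbitrary subsequence $(y^{j})$ of $(x^{k})$ and let $w$ denote its asymptotic center; the whole task then reduces to establishing $w=z$.

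First I would record two elementary observations that make the extraction arguments legitimate. Since any $\Delta$-convergent subsequence of $(y^{j})$ is in particular a $\Delta$-convergent subsequence of $(x^{k})$, its $\Delta$-limit lies in $\omega_{\Delta}(x^{k})=\{z\}$; and because $\omega_{\Delta}$ is nonempty for every bounded sequence in a $\CAT(0)$ space, $(y^{j})$ admits at least one $\Delta$-convergent subsequence, necessarily with limit $z$. Moreover, if a subsequence $\Delta$-converges to $z$, then by the very definition of $\Delta$-convergence $z$ is the asymptotic center of that subsequence, so the Opial property becomes available for it.

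Next I would set $s:=\limsup_{j}\rho(z,y^{j})$ and extract a subsequence $(y^{j_{i}})$ along which $\rho(z,y^{j_{i}})\to s$; extracting once more I may assume that $(y^{j_{i}})$ is $\Delta$-convergent, hence $\Delta$-convergent to $z$, so that $z$ is its asymptotic center and $\limsup_{i}\rho(z,y^{j_{i}})=s$. Suppose now, for contradiction, that $w\neq z$. On the one hand, the Opial property applied to $(y^{j_{i}})$ gives the strict inequality $s=\limsup_{i}\rho(z,y^{j_{i}})<\limsup_{i}\rho(w,y^{j_{i}})$. On the other hand, $\limsup_{i}\rho(w,y^{j_{i}})\leq\limsup_{j}\rho(w,y^{j})$ since $(y^{j_{i}})$ is a subsequence of $(y^{j})$, while the minimality of the asymptotic center $w$ of $(y^{j})$ yields $\limsup_{j}\rho(w,y^{j})\leq\limsup_{j}\rho(z,y^{j})=s$. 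Chaining these bounds produces $s<\limsup_{i}\rho(w,y^{j_{i}})\leq s$, a contradiction. Hence $w=z$, and as $(y^{j})$ was arbitrary, $(x^{k})$ is $\Delta$-convergent to $z$.

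The only delicate point, and the step I would treat most carefully, is the bookkeeping between the limsup taken along the full subsequence $(y^{j})$ and the limsup along the extracted sub-subsequence $(y^{j_{i}})$. The argument succeeds precisely because $(y^{j_{i}})$ is chosen to \emph{realize} the value $s=\limsup_{j}\rho(z,y^{j})$, while the minimality of $w$ controls $\limsup_{j}\rho(w,y^{j})$ over the larger sequence; the strict Opial inequality then collapses the two bounds into a contradiction. Nonemptiness of $\omega_{\Delta}$ and uniqueness of asymptotic centers, both guaranteed in complete $\CAT(0)$ spaces, are exactly what validate the two successive extractions.
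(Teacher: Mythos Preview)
Your argument is correct and follows essentially the same route as the paper: extract from the given subsequence a sub-subsequence that simultaneously realizes the $\limsup$ of $\rho(z,\cdot)$ and is $\Delta$-convergent (hence to $z$), then combine the Opial inequality along the sub-subsequence with the minimality of the asymptotic center along the full subsequence to obtain a contradiction. The only cosmetic difference is that the paper opens with a global contradiction (``suppose $(x^{k})$ is not $\Delta$-convergent to $z$'') before isolating the offending subsequence, whereas you proceed directly over an arbitrary subsequence; the chain of inequalities is otherwise identical.
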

\begin{proof}
Let us suppose to the contrary that $(x^{k})$ is not $\Delta$-convergent to $z$. Hence, there exists a subsequence $(u^{k})$ of $(x^{k})$ such that the asymptotic center $u \in H$ of $(u^{k})$ is different from the point $z$.

Let $(v^{k})$ be a subsequence of $(u^{k})$ such that $\limsup_{k} \rho(z,v^{k}) = \limsup_{k} \rho(z,u^{k})$ and that $(v^{k})$ is $\Delta$-convergent to some point $v \in H$. Since $(x^{k})$ has only one $\Delta$-subsequential limit, it must be the case that $v = z$. Using the Opial property, the fact that $(v^{k}) \subset (u^{k})$, and the definition of asymptotic center, we obtain
\begin{align*}
\limsup_{k \tendsto \infty} \rho(z,u^{k}) &=  \limsup_{k \tendsto \infty} \rho(z,v^{k}) 
< \limsup_{k \tendsto \infty} \rho(u,v^{k}) \\
&\leq \limsup_{k \tendsto \infty} \rho(u,u^{k}) 
< \limsup_{k \tendsto \infty} \rho(z,u^{k}),
\end{align*}
which is a contradiction. Therefore, $(x^{k})$ must be $\Delta$-convergent to $z$.
\end{proof}

\begin{lem}[Kadec-Klee property \cite{MR2754196}]\label{lem:Kadec-Klee}
Suppose that $(x^{k})$ is $\Delta$-convergent to $x \in H$ and $\lim_{k} \rho(x^{k},p) = \rho(x,p)$ for some $p \in H$, then $(x^{k})$ is strongly convergent to $x$.
\end{lem}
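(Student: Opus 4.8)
The plan is to reduce everything to showing that the asymptotic radius of $(x^{k})$ about its own $\Delta$-limit vanishes, i.e.\ $\limsup_{k} \rho(x,x^{k}) = 0$; since distances are nonnegative, this forces $\rho(x^{k},x) \tendsto 0$, which is exactly strong convergence. The bridge between the single-point hypothesis $\rho(x^{k},p) \tendsto \rho(x,p)$ and the asymptotic-center information carried by $\Delta$-convergence will be a sharp ``strong convexity at the minimizer'' estimate, extracted from the \eqref{eqn:CN} inequality. That estimate is where the real content lies; once it is in hand, the conclusion is a one-line substitution.

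First I would introduce the functional $f : H \to \R$ defined by $f(u) := \limsup_{k} \rho^{2}(u,x^{k})$, which is finite because the $\Delta$-convergent sequence $(x^{k})$ is bounded. Since $t \mapsto t^{2}$ is increasing on $[0,\infty)$, one has $f(u) = (\limsup_{k} \rho(u,x^{k}))^{2}$, so minimizing $f$ over $H$ is the same as minimizing $u \mapsto \limsup_{k} \rho(u,x^{k})$. Consequently the hypothesis that $x$ is the $\Delta$-limit, hence the asymptotic center, of $(x^{k})$ says precisely that $x = \argmin_{u \in H} f(u)$.

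Next I would record that $f$ inherits a uniform convexity from \eqref{eqn:CN}. Applying the \eqref{eqn:CN} inequality along the geodesic $\gamma_{a,b}$ to each fixed $x^{k}$ and then taking $\limsup_{k}$, using subadditivity of $\limsup$ (which goes in the favourable direction here), gives, for all $a,b \in H$ and $t \in [0,1]$,
\[
f(\gamma_{a,b}(t)) \leq (1-t)f(a) + tf(b) - t(1-t)\rho^{2}(a,b).
\]
Specialising to $a = x$, $b = u$, using $f(\gamma_{x,u}(t)) \geq f(x)$ by minimality, dividing by $t > 0$ and letting $t \dto 0$ then yields the key estimate
\[
f(u) \geq f(x) + \rho^{2}(x,u) \qquad (u \in H).
\]

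Finally I would substitute $u = p$. The left-hand side is $f(p) = (\lim_{k}\rho(p,x^{k}))^{2} = \rho^{2}(x,p)$ by the hypothesis, while $f(x) = \limsup_{k}\rho^{2}(x,x^{k}) \geq 0$, so the key estimate collapses to $\rho^{2}(x,p) \geq \limsup_{k}\rho^{2}(x,x^{k}) + \rho^{2}(x,p)$, forcing $\limsup_{k}\rho^{2}(x,x^{k}) \leq 0$. Hence $\rho(x^{k},x) \tendsto 0$ and $(x^{k})$ converges strongly to $x$. The only delicate bookkeeping is the interchange of $\limsup$ with the convex combination and the passage $t \dto 0$; the main conceptual obstacle is realising that a single comparison point $p$ suffices, and this is exactly what the estimate $f(u) \geq f(x) + \rho^{2}(x,u)$ supplies — it is the $\CAT(0)$ surrogate for the Hilbert-space identity $\norm{x^{k}-p}^{2} = \norm{x^{k}-x}^{2} + 2\product{x^{k}-x}{x-p} + \norm{x-p}^{2}$, with the troublesome cross term replaced by the nonnegative gap $\rho^{2}(x,u)$.
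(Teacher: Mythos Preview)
The paper does not prove this lemma; it is stated with a citation to \cite{MR2754196} and used as a black box. Your argument is correct: the \eqref{eqn:CN} inequality pushed through the $\limsup$ gives the strong convexity estimate $f(u) \geq f(x) + \rho^{2}(x,u)$ at the asymptotic center $x$, and evaluating at $u = p$ (where the hypothesis pins down $f(p) = \rho^{2}(x,p)$) immediately forces $f(x) = \limsup_{k}\rho^{2}(x,x^{k}) = 0$. Two small remarks: (i) the limit $t \dto 0$ is not really needed, since already at $t = 1/2$ one obtains $f(u) \geq f(x) + \tfrac{1}{2}\rho^{2}(x,u)$, which is enough; (ii) the identity $\limsup_{k}\rho^{2}(u,x^{k}) = (\limsup_{k}\rho(u,x^{k}))^{2}$ holds because $t \mapsto t^{2}$ is continuous and increasing on $[0,\infty)$, not merely increasing.
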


Combining the $\Delta$-convergence with the strong convergence implies the following Demiclosedness Principle, which is an important result in fixed point theory.
\begin{thm}[\cite{MR2416076}]\label{thm:demiclosed}
Suppose that $C \subset H$ is nonempty, closed, and convex, and $T : C \to C$ is nonexpansive. If $(x^{k})$ is a sequence in $H$ that is $\Delta$-convergent to $x$ and that $\lim_{k} \rho(x^{k},Tx^{k}) = 0$, then $Tx = x$.
\end{thm}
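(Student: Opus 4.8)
The plan is to exploit the characterization of the $\Delta$-limit as the asymptotic center, together with the Opial property recorded just before the definition of $\Delta$-convergence. Writing $r(y) := \limsup_{k \tendsto \infty} \rho(y,x^{k})$ for the asymptotic-radius functional, the hypothesis that $(x^{k})$ is $\Delta$-convergent to $x$ means exactly that $x$ is the unique asymptotic center of $(x^{k})$, so the Opial property yields the strict inequality $r(x) < r(u)$ for every $u \in H \setminus \{x\}$. My strategy is therefore to prove that the candidate fixed point $Tx$ satisfies $r(Tx) \leq r(x)$; this is incompatible with the strict Opial inequality unless $Tx = x$, which is what we want.

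The key estimate would come from the triangle inequality followed by the nonexpansiveness of $T$:
\[
\rho(Tx,x^{k}) \leq \rho(Tx,Tx^{k}) + \rho(Tx^{k},x^{k}) \leq \rho(x,x^{k}) + \rho(Tx^{k},x^{k}).
\]
Passing to $\limsup_{k \tendsto \infty}$ and using subadditivity of $\limsup$ together with the hypothesis $\lim_{k} \rho(x^{k},Tx^{k}) = 0$ gives $r(Tx) \leq r(x) + 0 = r(x)$. Combined with the Opial inequality $r(x) < r(Tx)$ that would hold if $Tx \neq x$, this produces a contradiction, forcing $Tx = x$; this is the whole argument.

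Before running it, I would tidy up the bookkeeping on domains, which I expect to be the only delicate point: the iterates $Tx^{k}$ and the value $Tx$ only make sense once the relevant points lie in $C$, so I would take $(x^{k}) \subset C$ and verify that the $\Delta$-limit $x$ again belongs to $C$. This membership $x \in C$ is not purely formal, but it follows from tools already available, namely the projection inequality in Theorem \ref{thm:projection}\ref{cdn:ProjIneq} applied with $z = x^{k}$ and the Opial property: setting $p := P_{C}(x)$, one obtains $\rho^{2}(x,p) + r(p)^{2} \leq r(x)^{2}$ after taking limits, and since $r(x) \leq r(p)$ by minimality of the asymptotic center, this forces $\rho(x,p) = 0$, i.e. $x = p \in C$. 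With $x \in C$ secured, the displayed estimate is legitimate and the proof closes. The hard part, then, is not any analytic estimate but simply certifying that $\Delta$-limits of sequences in a closed convex set stay inside it; everything else is the interplay of nonexpansiveness with $\limsup$ and the uniqueness of the asymptotic center.
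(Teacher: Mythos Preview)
The paper does not give its own proof of this statement: Theorem~\ref{thm:demiclosed} is stated with a citation to \cite{MR2416076} and is used as a black box. Your argument is correct and is in fact the standard proof one finds in the literature (including the cited source): compare $r(Tx)$ with $r(x)$ via the triangle inequality and nonexpansiveness, then invoke uniqueness of the asymptotic center. Your handling of the domain issue---showing that the $\Delta$-limit $x$ lies in $C$ via the projection inequality of Theorem~\ref{thm:projection}\ref{cdn:ProjIneq}---is also correct and is exactly the usual way to verify that closed convex sets are $\Delta$-closed. Nothing is missing.
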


The following concept of convergence and its properties are very essential in our works.
\begin{dfn}
A sequence $(x^{k})$ (resp. net $(x_{t})_{t \geq 0}$) in $H$ is said to be \emph{Fej\'er monotone} with respect to a nonempty set $V \subset H$ if for each $x \in V$, we have $\rho(x^{k+1},x) \leq \rho(x^{k},x)$ for all $k \in \N$ (resp. $\rho(x_{t},x) \leq \rho(x_{s},x)$ for all $0\leq s \leq t$).
\end{dfn}

\begin{prop}[\cite{MR3241330}]\label{prop:Fejer}
Suppose that $(x^{k}) \subset H$ is Fej\'er monotone with respect to a nonempty set $V \subset H$. Then, the following are true:
\begin{enumerate}[label=\rm{(\roman*)}]
\item $(x^{k})$ is bounded.
\item $(\rho(x,x^{k}))$ converges for any $x \in V$.
\item If every $\Delta$-accumulation point lies within $V$, then $(x^{k})$ is $\Delta$-convergent to an element in $V$.
\end{enumerate}
\end{prop}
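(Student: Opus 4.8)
The plan is to dispatch the first two assertions directly from the monotone-sequence structure imposed by Fej\'er monotonicity, and then reduce the third to the singleton criterion of Proposition \ref{prop:singletonOmegaDelta}.

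For (i) and (ii), since $V \neq \emptyset$ I would fix any $x \in V$. Fej\'er monotonicity gives $\rho(x^{k+1},x) \leq \rho(x^{k},x)$ for every $k$, so the real sequence $(\rho(x,x^{k}))$ is nonincreasing and bounded below by $0$; hence it converges, which is (ii). In particular $\rho(x,x^{k}) \leq \rho(x,x^{0})$ for all $k$, so $(x^{k})$ lies in a closed ball about $x$ and is therefore bounded, which is (i).

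For (iii), part (i) ensures $(x^{k})$ is bounded, so $\omega_{\Delta}(x^{k}) \neq \emptyset$. The strategy is to show $\omega_{\Delta}(x^{k})$ is a singleton; then Proposition \ref{prop:singletonOmegaDelta} forces $(x^{k})$ to $\Delta$-converge to that single point, which lies in $V$ by hypothesis. To prove uniqueness I would argue by contradiction: suppose $z_{1},z_{2} \in \omega_{\Delta}(x^{k})$ with $z_{1} \neq z_{2}$. Both belong to $V$ by assumption, so part (ii) yields the convergent limits $r_{i} := \lim_{k} \rho(z_{i},x^{k})$ for $i=1,2$. Choosing a subsequence $(x^{k_{j}})$ that $\Delta$-converges to $z_{1}$ makes $z_{1}$ its asymptotic center, and the Opial property then gives the strict inequality $\limsup_{j} \rho(z_{1},x^{k_{j}}) < \limsup_{j} \rho(z_{2},x^{k_{j}})$; since the full sequences already converge, both subsequential $\limsup$'s equal $r_{1}$ and $r_{2}$ respectively, so this reads $r_{1} < r_{2}$. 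Repeating the argument with a subsequence $\Delta$-converging to $z_{2}$ gives $r_{2} < r_{1}$, a contradiction. Hence $z_{1} = z_{2}$, completing the reduction.

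The main obstacle is precisely this uniqueness step in (iii): the payoff of part (ii) is that the distance sequences converge along the \emph{entire} index set, which is what upgrades the one-sided strict Opial inequality (valid only along a single $\Delta$-convergent subsequence) into the symmetric contradiction $r_{1} < r_{2} < r_{1}$. The hypothesis that every $\Delta$-accumulation point lies in $V$ is used exactly to guarantee that part (ii) applies to \emph{both} candidate limits $z_{1}$ and $z_{2}$; without it, neither $r_{1}$ nor $r_{2}$ need exist and the contradiction collapses.
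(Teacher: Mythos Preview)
Your argument is correct and is precisely the standard proof of this result. Note, however, that the paper does not supply its own proof of Proposition~\ref{prop:Fejer}: it quotes the statement from \cite{MR3241330} without argument, so there is nothing in the paper to compare against beyond observing that your proof is the expected one from that reference.
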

\begin{rmk}
We can replace the sequence $(x^{k})$ in the above proposition also with a net $(x_{t})_{t \geq 0}$ and still obtain similar results.
\end{rmk}

\subsection{Tangent Spaces}

Tangent spaces (also called tangent cones) to a given $\CAT(0)$ space were introduced earlier in \cite{MR1317739} (see also \cite{MR1744486,MR1835418}). However, we make a slight modification on their representations in this paper for the future technical convenience in our studies.

In order to introduce the tangent space and related notions subsequently, we first recall the notion of comparison angle with respect to $\mathbb{E}^{2}$.
\begin{dfn}
Suppose that $p,q,r \in H$. The \emph{comparison angle} between $q$ and $r$ at $p$, denoted with $\overline{\angle}_{p}(q,r)$, is given as follows: If $q,r \in H \setminus \{p\}$, we set
\[
\cos \overline{\angle}_{p}(q,r) := \frac{\product{\bar{q}-\bar{p}}{\bar{r}-\bar{p}}}{\norm{\bar{q}-\bar{p}}\norm{\bar{r}-\bar{p}}},
\]
where $\overline{\Delta}(\bar{p},\bar{q},\bar{r}) \subset \mathbb{E}^{2}$ is the comparison triangle of the geodesic triangle $\Delta(p,q,r) \subset H$. On the other hand, we set $\overline{\angle}_{p}(p,p) := 0$, and  $\overline{\angle}_{p}(p,r) = \overline{\angle}_{p}(r,p) := \frac{\pi}{2}$ for $r \in X\setminus\{p\}$.
\end{dfn}

Given two geodesics $\gamma_{1},\gamma_{2}$ on $H$ issuing from a common point $p \in H$. The \emph{Alexandrov angle} between the two geodesics is then defined by
\[
\alpha_{p} (\gamma_{1},\gamma_{2}) := \lim_{s,t \tendsto 0^{+}} \overline{\angle}_{p} (\gamma_{1}(s),\gamma_{2}(t)).
\]
To effectively compute the Alexandrov angle, the \emph{First Variation Formula} is available in the following form:
\begin{lem}[First Variation Formula]\label{thm:FVF}
Suppose that $p \in H$, $u \in H \setminus \{p\}$, and $\gamma$ is a nonzero unit-speed geodesic issuing from $p$. Then the following identity holds:
\[
\lim_{s \tendsto 0^{+}} \frac{\rho(u,p) - \rho(u,\gamma(s))}{s} = \cos \alpha_{p}(\gamma_{p,u},\gamma).
\]
\end{lem}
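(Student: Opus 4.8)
The plan is to set $L := \rho(p,u) > 0$, $f(s) := \rho(u,\gamma(s))$, and $\alpha := \alpha_p(\gamma_{p,u},\gamma)$, and to establish the identity by squeezing, i.e. by proving $\limsup_{s\to0^+}\frac{L-f(s)}{s}\le\cos\alpha$ and $\liminf_{s\to0^+}\frac{L-f(s)}{s}\ge\cos\alpha$ separately. The two tools I would lean on are the Euclidean law of cosines read off in comparison triangles (where the side lengths issuing from the apex $p$ are reproduced exactly and the angle at the apex is precisely the comparison angle $\overline{\angle}_p$), together with the standard monotonicity of comparison angles in $\CAT(0)$ spaces: for geodesics issuing from $p$, the map $(r,s)\mapsto\overline{\angle}_p(\gamma_{p,u}(r),\gamma(s))$ is nondecreasing in each variable, so this quantity is bounded below by its infimum, the defining limit of $\alpha$ exists, and that infimum is $\alpha$.

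For the upper bound I would work in the comparison triangle of $\Delta(p,u,\gamma(s))$. Since $\rho(p,u)=L$ and $\rho(p,\gamma(s))=s$ (as $\gamma$ is unit-speed), the law of cosines reads exactly $f(s)^2 = L^2 + s^2 - 2Ls\cos\overline{\angle}_p(u,\gamma(s))$. Monotonicity gives $\overline{\angle}_p(u,\gamma(s))\ge\alpha$, hence $\cos\overline{\angle}_p(u,\gamma(s))\le\cos\alpha$, and therefore $f(s)^2\ge L^2+s^2-2Ls\cos\alpha$. Taking nonnegative square roots and rationalizing, $\frac{L-f(s)}{s}\le\frac{L-\sqrt{L^2+s^2-2Ls\cos\alpha}}{s}=\frac{2L\cos\alpha-s}{L+\sqrt{L^2+s^2-2Ls\cos\alpha}}$, whose limit as $s\to0^+$ is $\cos\alpha$. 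This settles the upper bound.

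For the lower bound I would route the estimate through an interior point of the segment to $u$. Fixing $r\in(0,1]$ and writing $a:=rL=\rho(p,\gamma_{p,u}(r))$, the triangle inequality gives $f(s)\le\rho(u,\gamma_{p,u}(r))+\rho(\gamma_{p,u}(r),\gamma(s))=(L-a)+\rho(\gamma_{p,u}(r),\gamma(s))$, while the law of cosines in the comparison triangle of $\Delta(p,\gamma_{p,u}(r),\gamma(s))$ gives $\rho(\gamma_{p,u}(r),\gamma(s))^2 = a^2+s^2-2as\cos\overline{\angle}_p(\gamma_{p,u}(r),\gamma(s))$. Combining and rationalizing as before, $\frac{L-f(s)}{s}\ge\frac{a-\sqrt{a^2+s^2-2as\cos\overline{\angle}_p(\gamma_{p,u}(r),\gamma(s))}}{s}=\frac{2a\cos\overline{\angle}_p(\gamma_{p,u}(r),\gamma(s))-s}{a+\sqrt{a^2+s^2-2as\cos\overline{\angle}_p(\gamma_{p,u}(r),\gamma(s))}}$. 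Now I would let $s\to0^+$ with $r$ held fixed: by monotonicity, $\overline{\angle}_p(\gamma_{p,u}(r),\gamma(s))$ decreases to $\phi(r):=\lim_{s\to0^+}\overline{\angle}_p(\gamma_{p,u}(r),\gamma(s))$, so the right-hand side tends to $\cos\phi(r)$, yielding $\liminf_{s\to0^+}\frac{L-f(s)}{s}\ge\cos\phi(r)$ for every $r\in(0,1]$.

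Finally I would release $r\to0^+$. Since $\phi$ is nondecreasing in $r$ (monotonicity in the first variable) and its infimum over $r>0$ recovers the double limit defining $\alpha$, we have $\phi(r)\downarrow\alpha$ and hence $\cos\phi(r)\to\cos\alpha$; as the left-hand side does not depend on $r$, this gives $\liminf_{s\to0^+}\frac{L-f(s)}{s}\ge\cos\alpha$. Together with the upper bound, the limit exists and equals $\cos\alpha$. I expect the only delicate point to be this iterated-limit step in the lower bound: one must first freeze $r$ to pass $s\to0^+$ (so that the comparison angle stabilizes at $\phi(r)$) and only afterwards send $r\to0^+$, and it is exactly the monotonicity of comparison angles in $\CAT(0)$ spaces that legitimizes both passages and identifies the iterated limit with the Alexandrov angle $\alpha$.
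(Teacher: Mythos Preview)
The paper does not supply its own proof of this lemma; it is quoted as a standard fact from $\CAT(0)$ geometry (cf.\ Bridson--Haefliger, \emph{Metric Spaces of Non-Positive Curvature}, Corollary~II.3.6). Your argument is correct and is essentially the classical one: the upper bound via the comparison triangle $\Delta(p,u,\gamma(s))$ and the monotonicity $\overline{\angle}_{p}(u,\gamma(s))\geq\alpha$, and the lower bound via an intermediate point $\gamma_{p,u}(r)$ followed by the iterated limit $s\to0^{+}$ then $r\to0^{+}$, justified throughout by the monotonicity of comparison angles in $\CAT(0)$ spaces.
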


Recall that the \emph{metric identification} of a pseudometric space $(\tilde{M},\tilde{d})$ is a metric space $(M,d)$, where $M$ consists of equivalence classes $[x] := \{y \in X \,|\, \tilde{d}(x,y) = 0\}$ of $x \in X$ and $d([x],[y]) := \tilde{d}(x,y)$ for all $[x],[y] \in M$.

Denoted by $\tilde{S}_{p}$ the set of all normalized geodesics issuing from $p \in H$. Then $\tilde\angle_{p} := \alpha_{p}$ defines a pseudometric on $\tilde{S}_{p}$. The metric identification of $(\tilde{S}_{p},\tilde\angle_{p})$, denoted by $(S_{p},\angle_{p})$, is called the \emph{space of directions at $p$}. In the sequel, we write $\gamma \equiv [\gamma]$ for elements of $S_{p}$. Suppose that $\sim$ is an equivalence relation on $[0,\infty) \times S_{p}$ such that $(t_{1},\gamma_{1}) \sim (t_{2},\gamma_{2})$ if and only if one of the following conditions is satisfied:
\begin{enumerate}[label=(T\arabic*)]
\item\label{cdn:T1} $t_{1}\zeta(\gamma_{1}) = t_{2}\zeta(\gamma_{2}) = 0$ or
\item\label{cdn:T2} $t_{1}\zeta(\gamma_{1}) = t_{2}\zeta(\gamma_{2}) > 0$ and $\gamma_{1} = \gamma_{2}$.
\end{enumerate}
Put $T_{p}H := \left([0,\infty) \times S_{p}\right) / \sim$ and whenever there is no ambiguity, let us write $t\gamma \equiv [(t,\gamma)]_{\sim}$ for elements of $T_{p}H$ to simplify the notions.

Next, we endow $T_{p}H$ with a metric $d_{p}$ defined for each $t_{1}\gamma_{1},t_{2}\gamma_{2} \in T_{p}H$ by
\[
d_{p}(t_{1}\gamma_{1},t_{2}\gamma_{2}) := \sqrt{t_{1}^{2}\zeta(\gamma_{1}) + t_{2}^{2}\zeta(\gamma_{2}) - 2t_{1}t_{2}\zeta(\gamma_{1})\zeta(\gamma_{2})\cos \angle_{p}(\gamma_{1},\gamma_{2})}.
\]
To see the metric properties of $d_{p}$, we first note that the inequalities
\begin{equation}\label{eqn:aux_trig}
\abs{t_{1}\zeta(\gamma_{1}) - t_{2}\zeta\gamma_{2})} \leq d_{p}(t_{1}\gamma_{1},t_{2}\gamma_{2}) \leq t_{1}\zeta(\gamma_{1}) + t_{2}\zeta(\gamma_{2})
\end{equation}
hold for any $t_{1}\gamma_{1},t_{2}\gamma_{2} \in T_{p}H$. Let $\mathbb{T}_{p}H := \left([0,\infty) \times S'_{p}\right)/\approx$, where $S'_{p} := \{\gamma \in S_{p} \,|\, \zeta(\gamma) = 1\}$ and $\approx \,:=\, \sim \upharpoonright _{\mathbb{T}_{p}H}$ is the restricted equivalence relation. One can see easily that $[(t,\gamma)]_{\approx} = [(t,\gamma)]_{\sim}$ whenever $t > 0$ and $\gamma \in S'_{p}$. Let us put
\[
\left\{\begin{array}{l}
X_{+} := \{[(t,\gamma)]_{\sim} \,|\, t > 0, \zeta(\gamma) = 1\}; \\[.5em]
X_{0} := \{[(0,\gamma)]_{\sim}\} = \big\{\{(t,\gamma) \,|\, t = 0 \vee \zeta(\gamma) = 0\}\big\}; \\[.5em]
X_{0}' := \{[(0,\gamma)]_{\approx}\} = \big\{\{(t,\gamma) \,|\, t = 0, \zeta(\gamma) = 1\}\big\}.
\end{array}
\right.
\]
We can then write $T_{p}H$ and $\mathbb{T}_{p}H$ with the following representations:
\begin{equation}\label{eqn:representation}
\text{$T_{p}H = X_{+} \cup X_{0}$ \,\, and \,\, $\mathbb{T}_{p}H = X_{+} \cup X_{0}'.$}
\end{equation}

According to \cite{MR1744486} and \cite{MR1835418}, $\mathbb{T}_{p}H$ is a metric space with respect to a metric $D_{p}$ given by
\[
D_{p}([(t_{1},\gamma_{1})]_{\approx},[(t_{2},\gamma_{2})]_{\approx}) := \sqrt{t_{1}^{2} + t_{2}^{2} - 2t_{1}t_{2}\cos \angle_{p}(\gamma_{1},\gamma_{2})}, \quad \forall [(t_{1},\gamma_{1})]_{\approx},[(t_{2},\gamma_{2})]_{\approx} \in \mathbb{T}_{p}H.
\]
We will show now that $(T_{p}H,d_{p})$ is a metric space and it is isometry to $(\mathbb{T}_{p}H,D_{p})$.

\begin{prop}
$d_{p}$ is a metric on $T_{p}H$.
\end{prop}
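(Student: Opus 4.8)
The plan is to reduce everything to the already-known metric space $(\mathbb{T}_{p}H, D_{p})$, whose metric axioms are granted by \cite{MR1744486,MR1835418}, via an explicit isometric bijection. By the representations \eqref{eqn:representation}, the sets $T_{p}H = X_{+}\cup X_{0}$ and $\mathbb{T}_{p}H = X_{+}\cup X_{0}'$ share the common part $X_{+}$ and differ only in their single apex points $X_{0}$ and $X_{0}'$. I would therefore define $\Phi\colon T_{p}H\to\mathbb{T}_{p}H$ to be the identity on $X_{+}$ and to send $X_{0}$ to $X_{0}'$, which is plainly a bijection, and then show $d_{p}(x,y)=D_{p}(\Phi x,\Phi y)$ for all $x,y$; the metric axioms for $d_{p}$ follow at once by pulling them back along $\Phi$.

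Before comparing, I would first settle that $d_{p}$ is real-valued and well defined on equivalence classes. The key observation is that $\zeta$ takes values in $\{0,1\}$, so $\zeta(\gamma_{i})^{2}=\zeta(\gamma_{i})$ and the radicand equals $a^{2}+b^{2}-2ab\cos\angle_{p}(\gamma_{1},\gamma_{2})$ with the \emph{effective lengths} $a:=t_{1}\zeta(\gamma_{1})$ and $b:=t_{2}\zeta(\gamma_{2})$. This is the ordinary law of cosines, hence nonnegative (indeed it yields the bounds \eqref{eqn:aux_trig}), so the square root makes sense. For well-definedness, note that $d_{p}$ depends on the first slot only through $a$ and through $\zeta(\gamma_{1})\cos\angle_{p}(\gamma_{1},\gamma_{2})$: when the class is the apex $X_{0}$ one has $a=0$ and the cross term vanishes, so $d_{p}$ reduces to $b=t_{2}\zeta(\gamma_{2})$ independently of the chosen representative $(t_{1},\gamma_{1})$ with $t_{1}\zeta(\gamma_{1})=0$; and on $X_{+}$ the representative with $\zeta(\gamma_{1})=1$ is unique. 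The symmetric statement handles the second slot.

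The isometry is then a short case check. If $x=t_{1}\gamma_{1}$ and $y=t_{2}\gamma_{2}$ lie in $X_{+}$, then $\zeta(\gamma_{1})=\zeta(\gamma_{2})=1$ and the defining formula for $d_{p}$ collapses to exactly $D_{p}$, while $\Phi$ acts as the identity, so $d_{p}(x,y)=D_{p}(\Phi x,\Phi y)$. If $x=X_{0}$ is the apex and $y=t_{2}\gamma_{2}\in X_{+}$, then by the previous paragraph $d_{p}(x,y)=t_{2}$, whereas $\Phi x = X_{0}' = [(0,\gamma)]_{\approx}$ gives $D_{p}(\Phi x,\Phi y)=\sqrt{t_{2}^{2}}=t_{2}$; the remaining case $x=y=X_{0}$ yields $0$ on both sides. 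Thus $\Phi$ is a bijective isometry, and nonnegativity, symmetry, the identity of indiscernibles, and the triangle inequality for $d_{p}$ are inherited verbatim from those of $D_{p}$.

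I expect the only genuinely delicate point to be the bookkeeping at the apex: one must verify that $d_{p}$ is insensitive to the choice of representative of $X_{0}$ (where either $t_{1}=0$ or $\zeta(\gamma_{1})=0$ may occur) and that $\Phi$ is compatible with the distinct equivalence relations $\sim$ and $\approx$ on the two cones. Everything away from the apex is a formality, since the two distance formulas literally agree on $X_{+}$, so the crux is confirming that collapsing all of $X_{0}$ to the single apex $X_{0}'$ neither creates nor destroys distances.
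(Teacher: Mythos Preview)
Your argument is correct but proceeds by a different, more global route than the paper's. The paper verifies the metric axioms for $d_{p}$ directly: symmetry from that of $\angle_{p}$, identity of indiscernibles from the bounds \eqref{eqn:aux_trig}, and the triangle inequality by a case split according to whether each of the three points lies in $X_{+}$ or at the apex, using \eqref{eqn:aux_trig} for the apex cases and the known metric $D_{p}$ only for the all-in-$X_{+}$ case. You instead construct the bijection $\Phi$ up front, check it is distance-preserving, and pull back every axiom at once from $(\mathbb{T}_{p}H,D_{p})$; this is cleaner and has the bonus of already establishing the isometry $T_{p}H\cong\mathbb{T}_{p}H$, which the paper proves separately as its next proposition. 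The trade-off is that your proof relies on $D_{p}$ being a metric on all of $\mathbb{T}_{p}H$ (including its apex), whereas the paper's case analysis needs only the restriction of $D_{p}$ to $X_{+}$ together with the elementary estimate \eqref{eqn:aux_trig}; both inputs are equally available from the cited references, so this is a matter of taste rather than strength.
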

\begin{proof}
Let $t_{1}\gamma_{1},t_{2}\gamma_{2} \in T_{p}H$. By \eqref{eqn:aux_trig}, we can see that
\begin{align*}
d_{p}(t_{1}\gamma_{1},t_{2}\gamma_{2}) = 0 &\iff \text{$t_{1}\zeta(\gamma_{1}) = t_{2}\zeta(\gamma_{2})$ and $t_{1}^{2}(1 - \cos \angle_{p} (\gamma_{1},\gamma_{2})) = 0$} \\
&\iff \text{either \ref{cdn:T1} or \ref{cdn:T2} holds} \\
&\iff t_{1}\gamma_{1} = t_{2}\gamma_{2}.
\end{align*}
Next, since $\angle_{p}$ is a metric on $S_{p}$, we obtain $d_{p}(t_{1}\gamma_{1},t_{2}\gamma_{2}) = d_{p}(t_{2}\gamma_{2},t_{1}\gamma_{1})$ immediately. It remains to show that the triangle inequality
\[
d_{p}(t_{1}\gamma_{1},t_{3}\gamma_{3}) \leq d_{p}(t_{1}\gamma_{1},t_{2}\gamma_{2}) + d_{p}(t_{2}\gamma_{2},t_{3}\gamma_{3})
\]
holds for any $t_{i}\gamma_{i} \in T_{p}H$, with $i = 1,2,3$. If $t_{i} > 0$ and $\zeta(\gamma_{i}) = 1$ for all $i = 1,2,3$, we can use the fact that $d_{p}$ is a metric on $X_{+}$ to obtain the triangle inequality. If $t_{1} = 0$ or $\zeta(\gamma_{1}) = 0$, then it follows from \eqref{eqn:aux_trig} that
\begin{align*}
d_{p} (t_{1}\gamma_{1},t_{3}\gamma_{3}) &= t_{3}\zeta(\gamma_{3}) \\
&\leq t_{2}\zeta(\gamma_{2}) + \abs{t_{2}\zeta(\gamma_{2}) - t_{3}\zeta(\gamma_{3}) } \\
&\leq t_{2}\zeta(\gamma_{2}) + d_{p}(t_{2}\gamma_{2},t_{3}\gamma_{3}) \\
&= d_{p}(t_{1}\gamma_{1},t_{2}\gamma_{2}) + d_{p}(t_{2}\gamma_{2},t_{3}\gamma_{3}).
\end{align*}
Similar procedure also works when $t_{3} = 0$ or $\zeta(\gamma_{3}) = 0$. Finally, if $t_{2} = 0$ or $\zeta(\gamma_{2}) = 0$, then \eqref{eqn:aux_trig} implies
\[
d_{p}(t_{1}\gamma_{1},t_{3}\gamma_{3}) \leq t_{1}\zeta(\gamma_{1}) + t_{3}\zeta(\gamma_{3}) = d_{p}(t_{1}\gamma_{1},t_{2}\gamma_{2}) + d_{p}(t_{2}\gamma_{2},t_{3}\gamma_{3}).
\]
Hence, we now conclude that $d_{p}$ is a metric on $T_{p}H$.
\end{proof}

\begin{prop}
There is a bijection which preserves distances between $T_{p}H$ and $\mathbb{T}_{p}H$.
\end{prop}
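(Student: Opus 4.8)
The plan is to exhibit an explicit distance-preserving bijection $\Phi : T_{p}H \to \mathbb{T}_{p}H$ built directly from the representations \eqref{eqn:representation}. Both spaces share the common piece $X_{+}$, and on this piece the two equivalence relations $\sim$ and $\approx$ coincide, as already observed (namely $[(t,\gamma)]_{\approx} = [(t,\gamma)]_{\sim}$ whenever $t>0$ and $\gamma \in S'_{p}$). The remaining parts are the two singletons $X_{0}$ and $X_{0}'$, each consisting of a single ``origin'' of the respective cone. Accordingly I would define $\Phi$ to act as the identity on $X_{+}$, sending $[(t,\gamma)]_{\sim}$ to $[(t,\gamma)]_{\approx}$, and to send the unique point of $X_{0}$ to the unique point of $X_{0}'$.

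Well-definedness and bijectivity are then immediate. Since $T_{p}H = X_{+} \cup X_{0}$ and $\mathbb{T}_{p}H = X_{+} \cup X_{0}'$ are disjoint unions in which $X_{0}$ and $X_{0}'$ are singletons, $\Phi$ restricts to a bijection on each piece and is therefore a bijection overall. That $\Phi$ is well defined on $X_{0}$ despite the many representatives $(0,\gamma)$ and $(t,\gamma)$ with $\zeta(\gamma)=0$ is automatic once one recalls that $X_{0}$ is a single equivalence class.

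The substance of the proof is the verification that $\Phi$ preserves distances, i.e. $d_{p}(a,b) = D_{p}(\Phi(a),\Phi(b))$ for all $a,b \in T_{p}H$, which I would carry out by cases. If both $a = t_{1}\gamma_{1}$ and $b = t_{2}\gamma_{2}$ lie in $X_{+}$, then $\zeta(\gamma_{1})=\zeta(\gamma_{2})=1$, so the defining formula for $d_{p}$ collapses term by term to that of $D_{p}$ and the two agree. If exactly one point, say $b$, is the origin, then in $T_{p}H$ I may choose a representative with $t_{2}\zeta(\gamma_{2})=0$, which forces all terms carrying the factors $t_{2}$ or $\zeta(\gamma_{2})$ to vanish, so $d_{p}(a,b) = \sqrt{t_{1}^{2}\zeta(\gamma_{1})} = t_{1}$; on the other side the origin of $\mathbb{T}_{p}H$ is $[(0,\gamma)]_{\approx}$ and $D_{p}(\Phi(a),\Phi(b)) = \sqrt{t_{1}^{2} + 0 - 0} = t_{1}$, so the two coincide. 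If both $a$ and $b$ are the origin, both distances vanish trivially.

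Since every case reduces one square-root expression to the other using only $\zeta(\gamma)=1$ on $X_{+}$ and $t\zeta(\gamma)=0$ at the origin, the argument is essentially bookkeeping. The one point requiring care is that $d_{p}$ returns the same value regardless of which representative of the origin is chosen, but this is exactly what the inequalities \eqref{eqn:aux_trig} and the preceding proposition (that $d_{p}$ is a genuine metric, hence well defined on equivalence classes) already guarantee. I therefore do not expect any real obstacle beyond keeping track of the vanishing of the $\zeta$-weighted terms in the degenerate cases.
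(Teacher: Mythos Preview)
Your proposal is correct and follows essentially the same approach as the paper: the paper defines the map $\varphi_{p}:\mathbb{T}_{p}H\to T_{p}H$ (the inverse of your $\Phi$) as the identity on $X_{+}$ and sending $X_{0}'$ to $X_{0}$, then declares the distance-preservation ``trivial'' without writing out the cases. Your case analysis is exactly the verification the paper omits, and apart from the direction of the map the two arguments are identical.
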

\begin{proof}
We may see from \eqref{eqn:representation} that a mapping $\varphi_{p} : \mathbb{T}_{p}H \to T_{p}H$ given by
\[
\varphi_{p} \left([(t,\gamma)]_{\approx}\right) := \left\{\begin{array}{ll}
[(t,\gamma)]_{\sim}	 	&\text{if $[(t,\gamma)]_{\approx} \in X_{+}$;} \\[.5em]
[(0,\gamma)]_{\sim}		&\text{if $[(t,\gamma)]_{\approx} \in X_{0}'$}
\end{array}
\right.
\]
is a bijection. That is, $\varphi_{p}$ is identity on $X_{+}$ and maps $X_{0}'$ onto $X_{0}$. The fact that $\varphi_{p}$ preserves distances is trivial. 
\end{proof}

The metric space $(T_{p}H,d_{p})$ is henceforth called the \emph{tangent space} of $H$ at $p$. The \emph{tangent bundle} of $H$ is then defined by $TH := \bigcup_{p \in H} T_{p}H$. The isometry result above ensures that it is consistent with the classical notion of tangent spaces of a complete $\CAT(0)$ space as was given by \cite{MR1317739}. Moreover, we also have further implication in cases of $H$ being a Hilbert space or an Hadamard manifold. For instance, if $\mathcal{H}$ is a Hilbert space and $p \in \mathcal{H}$, then $T_{p}\mathcal{H}$ is isometric with $\mathcal{H}-\{p\}$ (and hence to $\mathcal{H}$) by the canonical map $t\gamma \mapsto \gamma(t) - p$. On the other hand, let $M$ be an Hadamard manifold with Riemannian tangent space at $p \in M$ denoted by $\mathds{T}_{p}M$. We know, in this case, that the exponential map $\exp_{p} : \mathds{T}_{p}M \to M$ is well-defined on the whole tangent space and is a diffeomorphism. Then $T_{p}M$ is isometric with $\mathds{T}_{p}M$ by the canonical map $t\gamma \mapsto \exp_{p}^{-1} t\dot\gamma(0)$, where we use the convention $\dot\gamma_{p,p}(0) = 0_{\mathds{T}_{p}M}$. See also \cite{MR1744486,MR1835418} and references therein for further information.

On each tangent space $T_{p}H$ we write $0_{p} := 0\gamma = t\gamma_{p,p}$ (here, $\gamma \in S_{p}$ and $t > 0$) and $\norm{t\gamma}_{p} := d_{p}(0_{p},t\gamma) = t\zeta(\gamma)$. For convenience, we invoke the notion $\0 := \{0_{p} \,|\, p \in H\}$ the \emph{zero section} of $TH$. Moreover, we adopt the product
\[
g_{p}(t_{1}\gamma_{1},t_{2}\gamma_{2}) := \frac{1}{2} \left[ \norm{t_{1}\gamma_{1}}_{p}^{2} + \norm{t_{2}\gamma_{2}}_{p}^{2} - d_{p}^{2}(t_{1}\gamma_{1},t_{2}\gamma_{2})\right]
\]
for any $t_{1}\gamma_{1},t_{2}\gamma_{2} \in T_{p}H$. By a direct calculation, we can deduce that
\[
g_{p}(t_{1}\gamma_{1},t_{2}\gamma_{2}) = t_{1}t_{2}\zeta(\gamma_{1})\zeta(\gamma_{2})\cos \angle_{p}(\gamma_{1},\gamma_{2}) \leq \norm{t_{1}\gamma_{1}}_{p}\norm{t_{2}\gamma_{2}}_{p},
\]
 which is an analogue of the Cauchy-Schwarz inequality. It is easy to see that 
\[
g_{p}(t_{1}\gamma_{1},t_{1}\gamma_{1}) = \norm{t_{1}\gamma_{1}}_{p}^{2}, \quad g_{p}(t_{1}\gamma_{1},t_{2}\gamma_{2}) = g_{p}(t_{2}\gamma_{2},t_{1}\gamma_{1}), \quad \text{and \,\,} g_{p}(t_{1}\gamma_{1},t_{2}\gamma_{2}) = t_{1}g_{p}(\gamma_{1},t_{2}\gamma_{2}).
\]

The following inequality is of fundamental importance in this present paper. Here, we adopt the notation
\[
\product{t\vec{px}}{s\vec{py}} := \frac{ts}{2} \left[\rho^{2}(p,x) + \rho^{2}(p,y) - \rho^{2}(x,y)\right]
\]
for $p,x,y \in H$ and $t,s \geq 0$.

\begin{prop}\label{Prop:gp-ql}
For each $s,t \geq 0$ and $p,x,y \in H$, the following inequality holds:
\[
g_{p}\left(t\rho(p,x)\gamma_{p,x},s\rho(p,y)\gamma_{p,y}\right) \geq \product{t\vec{px}}{s\vec{py}}.
\]
\end{prop}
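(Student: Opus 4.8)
The plan is to unwind both sides of the inequality using the explicit formula already computed for $g_{p}$ together with the Euclidean law of cosines, thereby reducing the whole statement to a single comparison between the Alexandrov angle and the comparison angle at $p$.

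First I would dispose of the degenerate cases. If $t=0$, $s=0$, $x=p$, or $y=p$, then one of the coefficients $t\rho(p,x)$, $s\rho(p,y)$ vanishes or one of the indicators $\zeta(\gamma_{p,x})$, $\zeta(\gamma_{p,y})$ vanishes, so the left-hand side reduces to $g_{p}(0_{p},\cdot)=0$. In these same cases the right-hand side also vanishes: when $x=p$ one has $\rho(x,y)=\rho(p,y)$, hence $\rho^{2}(p,x)+\rho^{2}(p,y)-\rho^{2}(x,y)=0$, and symmetrically when $y=p$, while $t=0$ or $s=0$ kills the factor $ts/2$. Thus in every degenerate case both sides equal $0$ and the inequality holds with equality.

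Now assume $t,s>0$ and $x,y\neq p$, so that $\zeta(\gamma_{p,x})=\zeta(\gamma_{p,y})=1$. Applying the identity $g_{p}(t_{1}\gamma_{1},t_{2}\gamma_{2})=t_{1}t_{2}\zeta(\gamma_{1})\zeta(\gamma_{2})\cos\angle_{p}(\gamma_{1},\gamma_{2})$ with $t_{1}=t\rho(p,x)$ and $t_{2}=s\rho(p,y)$, the left-hand side becomes $ts\,\rho(p,x)\rho(p,y)\cos\angle_{p}(\gamma_{p,x},\gamma_{p,y})$. On the other hand, the Euclidean law of cosines in the comparison triangle $\overline{\Delta}(\bar{p}\bar{x}\bar{y})$ yields
\[
\cos\overline{\angle}_{p}(x,y)=\frac{\rho^{2}(p,x)+\rho^{2}(p,y)-\rho^{2}(x,y)}{2\rho(p,x)\rho(p,y)},
\]
so that the right-hand side $\product{t\vec{px}}{s\vec{py}}$ is precisely $ts\,\rho(p,x)\rho(p,y)\cos\overline{\angle}_{p}(x,y)$. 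Since the common positive factor $ts\,\rho(p,x)\rho(p,y)$ may be cancelled, the proposition is equivalent to $\cos\angle_{p}(\gamma_{p,x},\gamma_{p,y})\geq\cos\overline{\angle}_{p}(x,y)$, that is, to $\angle_{p}(\gamma_{p,x},\gamma_{p,y})\leq\overline{\angle}_{p}(x,y)$.

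The crux is this final angle inequality, which is where I expect the real content to lie. It follows from the monotonicity of comparison angles in $\CAT(0)$ spaces: the $\CAT(0)$ inequality forces $(s,t)\mapsto\overline{\angle}_{p}(\gamma_{p,x}(s),\gamma_{p,y}(t))$ to be nondecreasing in each variable, so its limit as $s,t\tendsto 0^{+}$, which by definition is the Alexandrov angle $\alpha_{p}(\gamma_{p,x},\gamma_{p,y})=\angle_{p}(\gamma_{p,x},\gamma_{p,y})$, is bounded above by its value at $s=t=1$, namely $\overline{\angle}_{p}(\gamma_{p,x}(1),\gamma_{p,y}(1))=\overline{\angle}_{p}(x,y)$. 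Since the cosine is decreasing on $[0,\pi]$ and all angles in play lie in $[0,\pi]$, the desired inequality between cosines follows, and the proof is complete.
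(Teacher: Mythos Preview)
Your proof is correct and follows essentially the same approach as the paper: both reduce the inequality to $\angle_{p}(\gamma_{p,x},\gamma_{p,y})\leq\overline{\angle}_{p}(x,y)$ after unwinding the definitions of $g_{p}$ and $\product{t\vec{px}}{s\vec{py}}$. Your treatment is in fact slightly more detailed, as you explicitly verify the degenerate cases and supply the monotonicity-of-comparison-angles justification for the key angle inequality, which the paper simply invokes as a known fact.
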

\begin{proof}
The case where one of $\gamma_{p,x}$ and $\gamma_{p,y}$ is a zero geodesic is obvious. Suppose that both of them are nonzero geodesics. Since $\angle_{p}(\gamma_{p,x},\gamma_{p,y}) \leq \overline{\angle}_{p}(\gamma_{p,x},\gamma_{p,y})$, we have
\begin{align*}
\lefteqn{g_{p}\left(t\rho(p,x)\gamma_{p,x},s\rho(p,y)\gamma_{p,y}\right)} \\
&= st\rho(p,x)\rho(p,y) \cos \angle_{p}(\gamma_{p,x},\gamma_{p,y}) \\
&\geq st\rho(p,x)\rho(p,y) \cos \overline\angle_{p}(x,y) \\
&= st\rho(p,x)\rho(p,y) \left[ \frac{1}{2\rho(p,x)\rho(p,y)} \left( \rho^{2}(p,x) + \rho^{2}(p,y) - \rho^{2}(x,y) \right) \right] \\
&= \product{t\vec{px}}{s\vec{py}}. \tag*\qedhere
\end{align*}
\end{proof}

\section{Monotone vector fields}

In this section, we give a systematic study of the class of monotone vector fields on a $\CAT(0)$ space $H$ together with the two main properties, the maximality and the surjectivity condition. To emphasize the practical use of this class of vector fields, we also dedicate an especial observation of the subdifferential for a proper convex lower semicontinuous function as a monotone vector field.

By a \emph{(set-valued) vector field on $H$}, we mean a mapping $A : H \multimap TH$ satisfying $Ap \subset T_{p}H$ for all $p \in H$.

\begin{dfn}
A vector field $A : H \multimap TH$ is said to be \emph{monotone} if
\[
g_{p}(\eta,\gamma_{p,q}) \leq -g_{q}(\nu,\gamma_{q,p})
\]
holds for every $(p,\eta),(q,\nu) \in \gr(A)$, where $\gr(A) := \{(x,u) \in H \times TH \,|\, u \in Ax\}$ denotes the \emph{graph} of $A$. In addition, if $\gr(A)$ is not properly contained in the graph of any other monotone vector fields, then $A$ is said to be \emph{maximally monotone}.
\end{dfn}

\begin{prop}
Let $A$ be a monotone vector field on $H$ and $(p,\eta),(q,\nu) \in \gr(A)$. Suppose that the representations $t\rho(p,x)\gamma_{p,x} = \eta$ and $s\rho(q,y)\gamma_{q,y} = \nu$ are satisfied for some $s,t \geq 0$, and $x,y \in H$. Then the following inequality holds:
\[
\product{t\vec{px}}{\vec{pq}} + \product{s\vec{qy}}{\vec{qp}} \leq 0
\]
\end{prop}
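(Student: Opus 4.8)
The plan is to combine the defining monotonicity inequality for $A$ with the Cauchy--Schwarz-type estimate of Proposition \ref{Prop:gp-ql}, after rewriting the unit directions $\gamma_{p,q}$ and $\gamma_{q,p}$ in the scaled form demanded by that proposition. First I would dispose of the degenerate case $p = q$: here $\rho(p,q) = 0$, and substituting directly into the definition of $\product{\cdot}{\cdot}$ gives $\product{t\vec{px}}{\vec{pq}} = \frac{t}{2}\bigl[\rho^{2}(p,x) + \rho^{2}(p,q) - \rho^{2}(x,q)\bigr] = 0$, and likewise $\product{s\vec{qy}}{\vec{qp}} = 0$, so the asserted inequality reads $0 \leq 0$ and holds trivially. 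Hence I may assume $p \neq q$, so that $\rho(p,q) > 0$ and $\gamma_{p,q}, \gamma_{q,p}$ are genuine (nonzero) normalized geodesics.

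Next I would establish two lower bounds. Using the symmetry of $g_{p}$ together with its homogeneity $g_{p}(t_{1}\gamma_{1},t_{2}\gamma_{2}) = t_{1}g_{p}(\gamma_{1},t_{2}\gamma_{2})$, I rewrite the unit direction as a scaled vector,
\[
g_{p}(\eta, \gamma_{p,q}) = \frac{1}{\rho(p,q)}\, g_{p}\bigl(\eta, \rho(p,q)\gamma_{p,q}\bigr),
\]
and then apply Proposition \ref{Prop:gp-ql} to the representation $\eta = t\rho(p,x)\gamma_{p,x}$ (taking $s = 1$ and $y = q$ there) to obtain
\[
g_{p}(\eta, \gamma_{p,q}) \geq \frac{1}{\rho(p,q)}\product{t\vec{px}}{\vec{pq}}.
\]
An entirely symmetric computation at the point $q$, using $\nu = s\rho(q,y)\gamma_{q,y}$ and $\rho(q,p) = \rho(p,q)$, yields
\[
g_{q}(\nu, \gamma_{q,p}) \geq \frac{1}{\rho(p,q)}\product{s\vec{qy}}{\vec{qp}}.
\]

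Finally I would invoke the monotonicity of $A$, which at the pairs $(p,\eta),(q,\nu) \in \gr(A)$ states $g_{p}(\eta,\gamma_{p,q}) \leq -g_{q}(\nu,\gamma_{q,p})$, i.e. $g_{p}(\eta,\gamma_{p,q}) + g_{q}(\nu,\gamma_{q,p}) \leq 0$. Adding the two lower bounds and using this gives
\[
\frac{1}{\rho(p,q)}\Bigl[\product{t\vec{px}}{\vec{pq}} + \product{s\vec{qy}}{\vec{qp}}\Bigr] \leq g_{p}(\eta,\gamma_{p,q}) + g_{q}(\nu,\gamma_{q,p}) \leq 0,
\]
and multiplying through by $\rho(p,q) > 0$ delivers the claim. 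The one step I would treat most carefully is the bookkeeping that reconciles the unit tangent vector $\gamma_{p,q}$ appearing in the definition of monotonicity with the weighted vector $\rho(p,q)\gamma_{p,q}$ needed to apply Proposition \ref{Prop:gp-ql}; this is precisely where the normalization factor $1/\rho(p,q)$ enters, and it is also why the case $p = q$ (where $\gamma_{p,q}$ degenerates into a zero geodesic and $\rho(p,q)$ vanishes) must be separated out at the outset. Everything else is a direct chaining of the two displayed estimates with the monotonicity hypothesis.
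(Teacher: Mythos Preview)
Your proof is correct and follows essentially the same approach as the paper: separate the degenerate case $p=q$, then combine the monotonicity inequality with Proposition~\ref{Prop:gp-ql} after pulling out the factor $1/\rho(p,q)$, and finally multiply through. Your write-up is in fact more explicit than the paper's about the homogeneity step $g_{p}(\eta,\gamma_{p,q}) = \rho(p,q)^{-1}g_{p}(\eta,\rho(p,q)\gamma_{p,q})$ and about why the $p=q$ case is trivial, which is a nice touch.
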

\begin{proof}
The inequality is obvious if $p = q$. Thus we assume that $p \neq q$. It yields immediately from the monotonicity of $A$ and Proposition \ref{Prop:gp-ql} that
\begin{align*}
0 &\geq g_{p}(\eta,\gamma_{p,q}) + g_{q}(\nu,\gamma_{q,p}) \\
&= g_{p}(t\rho(p,x)\gamma_{p,x},\gamma_{p,q}) + g_{q}(s\rho(q,y)\gamma_{q,y},\gamma_{q,p}) \\
&\geq \frac{1}{\rho(p,q)}\left[ \product{t\vec{px}}{\vec{pq}} + \product{s\vec{qy}}{\vec{qp}} \right].
\end{align*}
The desired inequality then follows by rearrangements.
\end{proof}

The following definition is central in the studies in the rest of this paper.
\begin{dfn}
A vector field $A : H \multimap TH$ is said to satisfy the \emph{surjectivity condition} if for any $t > 0$ and $x \in H$, there exists a point $p \in H$ such that $t\rho(p,x)\gamma_{p,x} \in Ap$.
\end{dfn}

The monotonicity and surjectivity conditions are the two main ingredients for our theory developed henceforth in this paper.

\begin{prop}\label{prop:uniqep}
If $A$ is a monotone vector field on $H$ with surjectivity condition, then for any $t > 0$ and $x \in H$, there exists a unique $p \in H$ such that $t\rho(p,x)\gamma_{p,x} \in Ap$.
\end{prop}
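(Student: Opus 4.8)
The existence of such a point $p$ is precisely the content of the surjectivity condition, so the only thing requiring proof is uniqueness. The plan is to assume two such points coexist and force their distance to vanish by invoking the preceding proposition in the special case $y = x$ and $s = t$.

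First I would suppose that both $p,q \in H$ satisfy $t\rho(p,x)\gamma_{p,x} \in Ap$ and $t\rho(q,x)\gamma_{q,x} \in Aq$; the goal is to show $p = q$. Writing $\eta := t\rho(p,x)\gamma_{p,x}$ and $\nu := t\rho(q,x)\gamma_{q,x}$, we have $(p,\eta),(q,\nu) \in \gr(A)$, and these are exactly of the form treated in the preceding proposition upon making the choices $y = x$ and $s = t$. Applying that proposition therefore yields
\[
\product{t\vec{px}}{\vec{pq}} + \product{t\vec{qx}}{\vec{qp}} \leq 0.
\]

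Next I would expand both brackets using the defining identity $\product{t\vec{px}}{s\vec{py}} = \frac{ts}{2}\left[\rho^{2}(p,x) + \rho^{2}(p,y) - \rho^{2}(x,y)\right]$. With $s = 1$ and the appropriate endpoints, the first bracket equals $\frac{t}{2}\left[\rho^{2}(p,x) + \rho^{2}(p,q) - \rho^{2}(x,q)\right]$ and the second equals $\frac{t}{2}\left[\rho^{2}(q,x) + \rho^{2}(q,p) - \rho^{2}(x,p)\right]$. The terms involving $x$ cancel in pairs, leaving the clean sum $t\rho^{2}(p,q)$. Thus $t\rho^{2}(p,q) \leq 0$, and since $t > 0$ this forces $\rho(p,q) = 0$, that is, $p = q$.

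The argument is almost entirely mechanical once the correct specialization is chosen; the only point demanding care is to verify that the cross terms telescope exactly to $t\rho^{2}(p,q)$ rather than leaving a residual term, which is what makes the monotonicity inequality collapse to a genuine metric bound. There is no real analytic obstacle here, since existence is handed to us by the surjectivity hypothesis and uniqueness reduces to this short algebraic cancellation.
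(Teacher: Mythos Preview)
Your proof is correct and follows essentially the same route as the paper: both reduce uniqueness to the inequality $\product{t\vec{px}}{\vec{pq}} + \product{t\vec{qx}}{\vec{qp}} \leq 0$ and then expand the brackets to obtain $t\rho^{2}(p,q) \leq 0$. The only cosmetic difference is that you invoke the preceding proposition to obtain this inequality, whereas the paper re-derives it directly from monotonicity and Proposition~\ref{Prop:gp-ql}; the telescoping computation is identical.
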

\begin{proof}
Let $t > 0$, $x \in H$, and $p,p' \in H$ be points where $t\rho(p,x)\gamma_{p,x} \in Ap$ and $t\rho(p',x)\gamma_{p',x} \in Ap'$. By the monotonicity of $A$ and Proposition \ref{Prop:gp-ql}, we have
\begin{align*}
0 &\geq \rho(p,p'
)\left[g_{p}(t\rho(p,x)\gamma_{p,x},\gamma_{p,p'}) + g_{p'}(t\rho(p',x)\gamma_{p',x},\gamma_{p',p})\right] \\
&= g_{p}(t\rho(p,x)\gamma_{p,x},\rho(p,p')\gamma_{p,p'}) + g_{p'}(t\rho(p',x)\gamma_{p',x},\rho(p,p')\gamma_{p',p}) \\
&\geq t \left[ \product{\vec{px\vphantom{p'}}}{\vec{pp'}} + \product{\vec{p'x}}{\vec{p'p}} \right] \\
&= t \left[ \product{\vec{px\vphantom{p'}}}{\vec{pp'}} + \product{\vec{xp'}}{\vec{pp'}} \right] \\
&= t\product{\vec{pp'}}{\vec{pp'}} \\
&= t\rho^{2}(p,p').
\end{align*}
Hence $p = p'$ and the statement is proved.
\end{proof}

\begin{prop}\label{prop:surj-mm}
If $A$ is a monotone vector field over $H$ with surjectivity condition, then it is maximally monotone.
\end{prop}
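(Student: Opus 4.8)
The plan is to verify maximality through its standard single-point reformulation: it suffices to show that whenever a pair $(q,\nu)$ with $\nu \in T_q H$ is monotonically related to the whole graph of $A$, in the sense that $g_p(\eta,\gamma_{p,q}) \le -g_q(\nu,\gamma_{q,p})$ for every $(p,\eta)\in\gr(A)$, then already $(q,\nu)\in\gr(A)$. Indeed, if this implication holds and $B$ were a monotone vector field with $\gr(A)\subsetneq\gr(B)$, then any $(q,\nu)\in\gr(B)\setminus\gr(A)$ would be monotonically related to all of $\gr(A)$ and hence belong to $\gr(A)$, a contradiction; so $A$ would admit no proper monotone extension and thus be maximally monotone.

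First I would fix such a pair $(q,\nu)$ and rewrite $\nu$ in the form demanded by the surjectivity condition. If $\nu = 0_q$, set $x := q$ and let $t > 0$ be arbitrary. Otherwise $\nu$ has a nonzero direction, represented by a genuine normalized geodesic issuing from $q$, and I would take $x$ to be a point on that geodesic at distance $\norm{\nu}_p/t$ from $q$; choosing $t$ large enough guarantees that this distance lies within the available segment, so that $\nu = t\rho(q,x)\gamma_{q,x}$. With this $t$ and $x$ now fixed, the surjectivity condition furnishes a point $p\in H$ (unique by Proposition \ref{prop:uniqep}) with $\eta := t\rho(p,x)\gamma_{p,x}\in Ap$, that is, $(p,\eta)\in\gr(A)$.

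The heart of the argument then reproduces the computation of Proposition \ref{prop:uniqep}, now comparing $(p,\eta)$ with the candidate $(q,\nu)$ instead of two graph elements. Assuming $p\ne q$ so that $\gamma_{p,q}$ and $\gamma_{q,p}$ are defined, I would multiply the monotone relation by $\rho(p,q)>0$, use the homogeneity $g_p(\cdot,r\gamma)=r\,g_p(\cdot,\gamma)$ to absorb the factor into the second slot, and then invoke Proposition \ref{Prop:gp-ql} on each summand to obtain
\[
0 \ge \product{t\vec{px}}{\vec{pq}} + \product{t\vec{qx}}{\vec{qp}} = t\rho^2(p,q),
\]
the last equality being the elementary identity $\product{\vec{px}}{\vec{pq}}+\product{\vec{qx}}{\vec{qp}}=\rho^2(p,q)$ obtained by expanding the products and cancelling symmetric terms. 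Since $t>0$, this forces $\rho(p,q)=0$, contradicting $p\ne q$; hence $p=q$. Consequently $\nu = t\rho(q,x)\gamma_{q,x} = t\rho(p,x)\gamma_{p,x} = \eta \in Aq$, so $(q,\nu)\in\gr(A)$, which is exactly what the reformulation requires.

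The only genuinely delicate step is the representation $\nu = t\rho(q,x)\gamma_{q,x}$: a prescribed tangent vector need not be realizable at a fixed scale because geodesics in a general $\CAT(0)$ space may fail to extend, and this is precisely where I would exploit the freedom in $t$ granted by the surjectivity condition (which is assumed for \emph{all} $t>0$), shrinking the required distance $\norm{\nu}_p/t$ until it fits inside an available geodesic segment representing the direction of $\nu$. Once this representation is secured, everything that follows is a verbatim rerun of the uniqueness computation already established, so no further obstacle arises.
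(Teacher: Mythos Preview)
Your proof is correct and follows essentially the same approach as the paper: reduce maximality to the single-point criterion, represent the candidate tangent vector $\nu$ as $t\rho(q,x)\gamma_{q,x}$, invoke surjectivity at the same $(t,x)$ to produce a comparison point in $\gr(A)$, and rerun the uniqueness computation (Proposition~\ref{Prop:gp-ql} together with the identity $\product{\vec{px}}{\vec{pq}}+\product{\vec{qx}}{\vec{qp}}=\rho^{2}(p,q)$) to force $p=q$. Your extra care about the representation step is unnecessary, since any nonzero $\nu\in T_qH$ is by construction of the form $s\gamma_{q,w}$ for some $s>0$ and $w\neq q$, so one may simply take $x=w$ and $t=s/\rho(q,w)$ without exploiting the freedom in $t$.
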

\begin{proof}
Suppose that $p \in H$ and $p^{\ast} \in T_{p}H$ satisfy the inequality
\begin{equation}\label{eqn:premax}
g_{p}(p^{\ast},\gamma_{p,q}) + g_{q}(q^{\ast},\gamma_{q,p}) \leq 0,
\end{equation}
for any choice of $(q,q^{\ast}) \in \gr(A)$. Suppose that $p^{\ast}$ has the representation $p^{\ast} = t\rho(p,u)\gamma_{p,u}$ for some $t > 0$ and $u \in H$. By the surjectivity condition of $A$, there is a unique point $z \in H$ such that $t\rho(z,u)\gamma_{z,u} \in Az$. In view of \eqref{eqn:premax} and Proposition \ref{Prop:gp-ql}, we have
\begin{align*}
0 &\geq \rho(p,z)\left[ g_{p}(t\rho(p,u)\gamma_{p,u},\gamma_{p,z}) + g_{z}(t\rho(z,u)\gamma_{z,u},\gamma_{z,p}) \right] \\
&= g_{p}(t\rho(p,u)\gamma_{p,u},\rho(p,z)\gamma_{p,z}) + g_{z}(t\rho(z,u)\gamma_{z,u},\rho(p,z)\gamma_{z,p}) \\
&\geq t \left[ \product{\vec{pu}}{\vec{pz}} + \product{\vec{zu}}{\vec{zp}} \right] \\
&= t \left[ \product{\vec{pu}}{\vec{pz}} + \product{\vec{uz}}{\vec{pz}} \right] \\
&= t \product{\vec{pz}}{\vec{pz}} \\
&= t\rho^{2}(p,z).
\end{align*}
Since $t > 0$, the above inequalities imply $p = z$ so that $(p,p^{\ast}) \in \gr(A)$. Hence the maximality is obtained.
\end{proof}

\subsection{Subdifferential of a convex function}

In this subsection, we study a particular example of a monotone vector field with surjectivity condition, namely the subdifferential of a proper, convex, lower semicontinuous function. For simplicity, we write $\Gamma_{0}$ to denote the class of proper, convex, lower semicontinuous functions $F : H \to (-\infty,+\infty]$.

\begin{dfn}
Let $F \in \Gamma_{0}$ be given. At each $p \in H$, a tangent vector $p^{\ast} \in T_{p}H$ is called a \emph{subgradient} of $F$ at $p$ if 
\[
F(x) \geq F(p) + \rho(p,x)g_{p}(p^{\ast},\gamma_{p,x})
\]
for every $x \in H$. The \emph{subdifferential} of $F$ is the vector field $\partial F : H \multimap TH$, where $\partial F (p)$ is the set of all subgradients of $F$ at $p$ for each $p \in H$.
\end{dfn}

\begin{prop}
$\partial F$ is monotone for each $F \in \Gamma_{0}$.
\end{prop}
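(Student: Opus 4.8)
The plan is to mimic the classical argument that the subdifferential of a convex function is monotone, which in the linear setting rests on summing the two defining subgradient inequalities. First I would fix $(p,\eta),(q,\nu)\in\gr(\partial F)$ and observe that, since $F$ is proper, both $p$ and $q$ necessarily lie in $\dom F$: if, say, $F(p)=+\infty$, then the subgradient inequality defining $\eta\in\partial F(p)$ would force $F(x)=+\infty$ for every $x\in H$, contradicting properness of $F$. Hence $F(p)$ and $F(q)$ are finite, and the manipulations below involve no indeterminate expression $\infty-\infty$.

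Next I would write down the subgradient inequality twice, evaluating each instance at the \emph{other} base point. Applying the definition of $\eta\in\partial F(p)$ with $x=q$ gives
\[
F(q)\geq F(p)+\rho(p,q)\,g_{p}(\eta,\gamma_{p,q}),
\]
while applying the definition of $\nu\in\partial F(q)$ with $x=p$ gives
\[
F(p)\geq F(q)+\rho(q,p)\,g_{q}(\nu,\gamma_{q,p}).
\]
Adding these two inequalities, cancelling the finite quantity $F(p)+F(q)$ from both sides, and using the symmetry $\rho(p,q)=\rho(q,p)$, I would obtain
\[
0\geq \rho(p,q)\bigl[g_{p}(\eta,\gamma_{p,q})+g_{q}(\nu,\gamma_{q,p})\bigr].
\]

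To finish I would split into two cases according to whether $p=q$. If $p\neq q$, then $\rho(p,q)>0$, and dividing through yields exactly $g_{p}(\eta,\gamma_{p,q})\leq -g_{q}(\nu,\gamma_{q,p})$, which is the defining inequality of monotonicity. If $p=q$, then $\gamma_{p,q}=\gamma_{p,p}$ is the zero geodesic, so $\zeta(\gamma_{p,q})=0$; from the explicit expression $g_{p}(t_{1}\gamma_{1},t_{2}\gamma_{2})=t_{1}t_{2}\zeta(\gamma_{1})\zeta(\gamma_{2})\cos\angle_{p}(\gamma_{1},\gamma_{2})$ both pairings $g_{p}(\eta,\gamma_{p,q})$ and $g_{q}(\nu,\gamma_{q,p})$ vanish, and the monotonicity inequality holds trivially with both sides equal to zero.

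I do not expect any serious obstacle: the argument is essentially the $\CAT(0)$ transcription of the Hilbert-space proof. The only points demanding care are the two bookkeeping matters isolated above, namely justifying that $F(p)$ and $F(q)$ are finite so that the two inequalities may legitimately be summed, and treating the degenerate case $p=q$, where $\gamma_{p,q}$ is a zero geodesic and the pairing $g_{p}(\cdot,\gamma_{p,q})$ collapses to zero. Everything else is a direct application of the defining inequality of the subgradient together with the symmetry of $\rho$.
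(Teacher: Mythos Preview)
Your proposal is correct and takes essentially the same approach as the paper: write down the two subgradient inequalities evaluated at the opposite base points and combine them to obtain $\rho(p,q)\bigl[g_{p}(\eta,\gamma_{p,q})+g_{q}(\nu,\gamma_{q,p})\bigr]\leq 0$. The paper's version is in fact terser than yours---it chains the two inequalities through $F(q)-F(p)$ rather than adding them, and it does not pause over the finiteness of $F(p),F(q)$ or the degenerate case $p=q$---so your extra bookkeeping is harmless and arguably cleaner.
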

\begin{proof}
Suppose that $F \in \Gamma_{0}$ and $(p,p^{\ast}),(q,q^{\ast}) \in \gr(\partial F)$. We thus have
\[
F(q) \geq F(p) + \rho(p,q)g_{p}(p^{\ast},\gamma_{p,q})
\]
and also
\[
F(p) \geq F(q) + \rho(p,q)g_{q}(q^{\ast},\gamma_{q,p}).
\]
Rearraging yields
\[
-\rho(p,q)g_{q}(q^{\ast},\gamma_{q,p}) \geq F(q) - F(p) \geq \rho(p,q)g_{p}(p^{\ast},\gamma_{p,q}),
\]
and so the monotonicity is obtained.
\end{proof}

The following result characterizes elements in $\partial F$ and can also be regarded as a generalized Fermat rule.
\begin{prop}\label{prop:genFermat}
Let $F \in \Gamma_{0}$, $p,\bar{p} \in H$ and $\lambda > 0$ be given. Then
\[
\lambda^{-1}\rho(\bar{p},p)\gamma_{\bar{p},p} \in \partial F(\bar{p}) \iff \bar{p} = \prox_{\lambda} (p) = \argmin_{y \in H} \left[ F(y) + \frac{1}{2\lambda}\rho^{2}(y,p) \right].
\]
\end{prop}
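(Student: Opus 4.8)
The plan is to treat the two implications separately, in both cases comparing the subgradient inequality for $F$ at $\bar{p}$ against the minimality of the proximal functional $\Phi(y) := F(y) + \frac{1}{2\lambda}\rho^{2}(y,p)$. Throughout, write $p^{\ast} := \lambda^{-1}\rho(\bar{p},p)\gamma_{\bar{p},p}$. The single computation used on both sides is that, by the scaling property of $g_{\bar{p}}$ together with Proposition \ref{Prop:gp-ql},
\[
\rho(\bar{p},x)\,g_{\bar{p}}(p^{\ast},\gamma_{\bar{p},x}) = g_{\bar{p}}\big(p^{\ast},\rho(\bar{p},x)\gamma_{\bar{p},x}\big) \geq \frac{1}{2\lambda}\big[\rho^{2}(\bar{p},p) + \rho^{2}(\bar{p},x) - \rho^{2}(p,x)\big]
\]
for every $x \in H$.

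For the implication $(\Rightarrow)$, I would assume $p^{\ast} \in \partial F(\bar{p})$ and aim to show that $\bar{p}$ minimizes $\Phi$. Starting from the subgradient inequality $F(x) \geq F(\bar{p}) + \rho(\bar{p},x)g_{\bar{p}}(p^{\ast},\gamma_{\bar{p},x})$, I would add $\frac{1}{2\lambda}\rho^{2}(x,p)$ to both sides and insert the displayed lower bound; the terms $\mp\frac{1}{2\lambda}\rho^{2}(p,x)$ cancel and one is left with $\Phi(x) \geq F(\bar{p}) + \frac{1}{2\lambda}[\rho^{2}(\bar{p},p) + \rho^{2}(\bar{p},x)] \geq \Phi(\bar{p})$. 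Since $\Phi$ admits a unique minimizer (its quadratic part being strongly convex by the \eqref{eqn:CN} inequality), this forces $\bar{p} = \prox_{\lambda}(p)$.

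For the converse $(\Leftarrow)$, I would assume $\bar{p} = \prox_{\lambda}(p)$ minimizes $\Phi$ and recover the subgradient inequality by a first-order variation. Fixing $x \in H$ and writing $x_{t} := \gamma_{\bar{p},x}(t)$ for $t \in (0,1]$, minimality gives $F(x_{t}) - F(\bar{p}) \geq \frac{1}{2\lambda}[\rho^{2}(\bar{p},p) - \rho^{2}(x_{t},p)]$, while convexity of $F$ along $\gamma_{\bar{p},x}$ gives $F(x_{t}) - F(\bar{p}) \leq t[F(x) - F(\bar{p})]$. Dividing by $t$ yields
\[
F(x) - F(\bar{p}) \geq \frac{1}{2\lambda t}\big[\rho^{2}(\bar{p},p) - \rho^{2}(x_{t},p)\big],
\]
and letting $t \to 0^{+}$ reduces the problem to evaluating the limit of the right-hand side.

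The main obstacle is precisely this limit. I would factor $\rho^{2}(\bar{p},p) - \rho^{2}(x_{t},p) = [\rho(\bar{p},p) - \rho(x_{t},p)][\rho(\bar{p},p) + \rho(x_{t},p)]$; the second factor tends to $2\rho(\bar{p},p)$ as $x_{t} \to \bar{p}$, and for the first factor I would reparametrize $\gamma_{\bar{p},x}$ to unit speed and apply the First Variation Formula (Lemma \ref{thm:FVF}) to obtain $\lim_{t\to 0^{+}} t^{-1}[\rho(\bar{p},p) - \rho(x_{t},p)] = \rho(\bar{p},x)\cos\angle_{\bar{p}}(\gamma_{\bar{p},p},\gamma_{\bar{p},x})$. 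Multiplying the factors, the limit equals $\lambda^{-1}\rho(\bar{p},p)\rho(\bar{p},x)\cos\angle_{\bar{p}}(\gamma_{\bar{p},p},\gamma_{\bar{p},x}) = \rho(\bar{p},x)g_{\bar{p}}(p^{\ast},\gamma_{\bar{p},x})$, which is exactly the subgradient inequality, so $p^{\ast} \in \partial F(\bar{p})$. The degenerate cases $x = \bar{p}$ (trivial) and $\bar{p} = p$ (where $p^{\ast} = 0_{\bar{p}}$ and the First Variation Formula does not apply, but the limit is computed directly from $\rho^{2}(x_{t},p) = t^{2}\rho^{2}(\bar{p},x)$) would be dispatched separately.
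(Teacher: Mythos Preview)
Your proposal is correct and follows essentially the same route as the paper's proof: both directions use Proposition~\ref{Prop:gp-ql} for $(\Rightarrow)$, and the convexity-plus-First-Variation-Formula argument along $\gamma_{\bar{p},x}$ for $(\Leftarrow)$, with the same factoring of $\rho^{2}(\bar{p},p)-\rho^{2}(x_{t},p)$ and the same separation of the degenerate cases $x=\bar{p}$ and $\bar{p}=p$. The only cosmetic difference is that for $\bar{p}=p$ the paper invokes an external result to conclude $\bar{p}$ minimizes $F$, whereas you compute the limit directly from $\rho^{2}(x_{t},p)=t^{2}\rho^{2}(\bar{p},x)$; both lead to $0_{\bar{p}}\in\partial F(\bar{p})$.
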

\begin{proof}
Let us first show the `only if' part. Assume that $\lambda^{-1}\rho(\bar{p},p)\gamma_{\bar{p},p} \in \partial F(\bar{p})$. For any $y \in H$, we get
\begin{align*}
\lambda F(y) &\geq \lambda F(\bar{p}) + \rho(\bar{p},p)\rho(\bar{p},y)g_{\bar{p}}(\gamma_{\bar{p},p},\gamma_{\bar{p},y}) \\
&\geq \lambda F(\bar{p}) + \product{\vec{\bar{p}p}}{\vec{\bar{p}y}} \\
&= \lambda F(\bar{p}) + \frac{1}{2} \left[ \rho^{2}(\bar{p},p) + \rho^{2}(\bar{p},y) - \rho^{2}(p,y) \right] \\
&\geq \lambda F(\bar{p}) + \frac{1}{2} \left[ \rho^{2}(\bar{p},p) - \rho^{2}(p,y) \right].
\end{align*}
Therefore, we have $\bar{p} = \prox_{\lambda} (p)$.

Next, we show the `if' part. Suppose that $\bar{p} = \prox_{\lambda}(p)$, whose definition gives
\[
F(y) - F(\bar{p}) \geq \frac{1}{2\lambda} \left[ \rho^{2}(\bar{p},p) - \rho^{2}(y,p) \right],
\]
for all $y \in H$. If $p = \bar{p}$, then $\bar{p}$ is a minimizer of $F$ (see \cite[Proposition 6.5]{MR3206460}). It follows from the definition of $\partial F$ that $\lambda^{-1}\rho(\bar{p},p)\gamma_{\bar{p},p} = 0_{\bar{p}} \in \partial F(\bar{p})$. Next, suppose that $p \neq \bar{p}$. Let us fix any $y \in H\setminus\{\bar{p}\}$ and for each $t \in [0,1]$, put $y_{t} := (1-t)\bar{p}\oplus ty$. Further, if we set $Y(s) := y_{s/\rho(\bar{p},y)}$ for each $s \in [0,\rho(\bar{p},y)]$, then $Y$ is a unit-speed geodesic.

For $t \in (0,1)$, putting $y = y_{t}$ in the above inequality and applying the convexity of $F$ yield
\begin{align*}
F(y) - F(\bar{p}) &\geq \frac{1}{2\lambda t} \left[ \rho^{2}(\bar{p},p) - \rho^{2}(y_{t},p) \right] \\
&= \frac{1}{2\lambda}[\rho(\bar{p},p) + \rho(y_{t},p)]\left[\frac{\rho(\bar{p},p) - \rho(y_{t},p)}{t}\right] \\
&= \frac{\rho(\bar{p},y)}{2\lambda}[\rho(\bar{p},p) + \rho(y_{t},p)]\left[\frac{\rho(\bar{p},p) - \rho(Y(t\rho(\bar{p},y)),p)}{t\rho(\bar{p},y)}\right].
\end{align*}
Letting $t \tendsto 0^{+}$ and taking into account the First Variation Formula (Theorem \ref{thm:FVF}), we have
\[
F(y) - F(\bar{p}) \geq \frac{\rho(\bar{p},p)\rho(\bar{p},y)}{\lambda} \cos \angle_{p}(\gamma_{\bar{p},p},\gamma_{\bar{p},y}) = \rho(\bar{p},y) g_{p}\left(\lambda^{-1}\rho(\bar{p},p)\gamma_{\bar{p},p},\gamma_{\bar{p},y}\right).
\]
It is trivial that the above inequality holds for $y = \bar{p}$. Therefore, we may conclude that $\lambda^{-1}\rho(\bar{p},p)\gamma_{\bar{p},p} \in \partial F(\bar{p})$.
\end{proof}

\begin{thm}
$\partial F$ is maximally monotone for $F \in \Gamma_{0}$.
\end{thm}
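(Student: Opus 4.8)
The plan is to deduce maximal monotonicity of $\partial F$ from the surjectivity criterion already established. Since we have just shown that $\partial F$ is monotone, Proposition \ref{prop:surj-mm} reduces the task to verifying that $\partial F$ satisfies the surjectivity condition; that is, for every $t > 0$ and $x \in H$ we must produce a point $p \in H$ with $t\rho(p,x)\gamma_{p,x} \in \partial F(p)$. Thus I would not argue directly with the definition of maximality at all, but instead route everything through the surjectivity-implies-maximality principle.

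The key observation is that the generalized Fermat rule (Proposition \ref{prop:genFermat}) turns this requirement into a pure existence statement about proximal points. Fix $t > 0$ and $x \in H$, and set $\lambda := t^{-1}$. Reading Proposition \ref{prop:genFermat} with $p$ as the base point of the tangent vector and $x$ as the target, we obtain, for every $p \in H$,
\[
t\rho(p,x)\gamma_{p,x} = \lambda^{-1}\rho(p,x)\gamma_{p,x} \in \partial F(p) \iff p = \prox_{\lambda}(x).
\]
Consequently, the surjectivity condition holds at $(t,x)$ exactly when the proximal point $\prox_{\lambda}(x)$ exists, in which case the sought point is simply $p := \prox_{\lambda}(x)$.

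It therefore remains to guarantee the existence of $\prox_{\lambda}(x) = \argmin_{y\in H}\big[F(y) + (2\lambda)^{-1}\rho^{2}(y,x)\big]$ for every $\lambda > 0$ and $x \in H$; this is the only substantive step, and the place where completeness of $H$ enters. The objective $G := F + (2\lambda)^{-1}\rho^{2}(\cdot,x)$ is proper, and being the sum of a convex lower semicontinuous function and a strongly convex continuous function it is convex, $\Delta$-lower semicontinuous, and coercive, since the quadratic penalty $(2\lambda)^{-1}\rho^{2}(\cdot,x)$ dominates the at most linear lower decay of $F$. Existence of a minimizer then follows by a standard argument: take a minimizing sequence, note that coercivity forces it to be bounded, extract a $\Delta$-convergent subsequence (possible since every bounded sequence in $H$ admits $\Delta$-subsequential limits), and invoke the $\Delta$-lower semicontinuity of $G$ to conclude that the $\Delta$-limit attains the infimum; uniqueness, which would come from strong convexity, is not even needed for surjectivity. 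This existence fact is classical in the Hadamard setting (see \cite{MR3206460}).

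The main obstacle is thus isolated entirely in the well-posedness of the minimization defining $\prox_{\lambda}$. Once that is in hand, the conclusion is an immediate combination of the displayed equivalence with Proposition \ref{prop:surj-mm}: surjectivity of $\partial F$ is verified, and maximal monotonicity follows with no further estimates. I expect no difficulty beyond the existence step, precisely because the monotonicity of $\partial F$ and the surjectivity-to-maximality implication have already been done.
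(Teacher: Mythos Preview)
Your proposal is correct and follows exactly the paper's route: verify the surjectivity condition via Proposition~\ref{prop:genFermat} together with the well-definedness of $\prox_{\lambda}$, then invoke Proposition~\ref{prop:surj-mm}. The paper's proof is terser (it simply cites ``the property of the operator $\prox$'' for existence), but your expanded justification of why $\prox_{\lambda}(x)$ exists is the only place you add detail, and it is accurate.
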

\begin{proof}
It is clear by the property of the operator $\prox$ and the previous theorem that $\partial F$ satisfies the surjectivity condition. The conclusion follows in view of Proposition \ref{prop:surj-mm}.
\end{proof}

Finally, we show the density of the domain of $\partial F$ in that of $F$. In other words, a proper, convex, lower semicontinuous function is subdifferentiable almost everywhere in reasonable measures.

\begin{thm}
$\overline{\dom \partial F} = \overline{\dom F}$.
\end{thm}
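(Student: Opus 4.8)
The plan is to prove the two inclusions separately. The inclusion $\overline{\dom\partial F}\subseteq\overline{\dom F}$ is the routine one: if $p^{\ast}\in\partial F(p)$, then the subgradient inequality $F(x)\geq F(p)+\rho(p,x)g_{p}(p^{\ast},\gamma_{p,x})$ forces $F(p)<+\infty$, since otherwise choosing $x\in\dom F$ (which exists as $F$ is proper) would give a contradiction. Hence $\dom\partial F\subseteq\dom F$, and the inclusion of closures follows. The substance of the theorem is the reverse inclusion $\overline{\dom F}\subseteq\overline{\dom\partial F}$, for which it suffices to show that every $p\in\dom F$ can be approximated in metric by points of $\dom\partial F$.

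First I would approximate such a $p$ by its proximal points. For $\lambda>0$ set $\bar p_{\lambda}:=\prox_{\lambda}(p)$. By Proposition \ref{prop:genFermat} we have $\lambda^{-1}\rho(\bar p_{\lambda},p)\gamma_{\bar p_{\lambda},p}\in\partial F(\bar p_{\lambda})$, so $\bar p_{\lambda}\in\dom\partial F$ for every $\lambda>0$. The minimizing property of $\prox_{\lambda}$, tested against $y=p$, gives the basic descent estimate
\[
F(\bar p_{\lambda})+\frac{1}{2\lambda}\rho^{2}(\bar p_{\lambda},p)\leq F(p),
\]
so the whole argument reduces to controlling $F(\bar p_{\lambda})$ from below. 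To this end I would fix one point $q_{0}\in\dom\partial F$ (nonempty, e.g. any proximal point of $p$) together with $q_{0}^{\ast}\in\partial F(q_{0})$; the subgradient inequality together with the lower Cauchy--Schwarz bound $g_{q_{0}}(q_{0}^{\ast},\gamma_{q_{0},x})\geq-\norm{q_{0}^{\ast}}_{q_{0}}$ (immediate from $g_{q_{0}}=t_{1}t_{2}\zeta\zeta\cos\angle_{q_{0}}$ and $\cos\geq-1$) yields the affine-type minorant
\[
F(x)\geq F(q_{0})-c\,\rho(q_{0},x),\qquad c:=\norm{q_{0}^{\ast}}_{q_{0}},
\]
valid for all $x\in H$.

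Combining the minorant (applied at $x=\bar p_{\lambda}$) with the descent estimate and the triangle inequality $\rho(q_{0},\bar p_{\lambda})\leq\rho(q_{0},p)+\rho(\bar p_{\lambda},p)$, I obtain, writing $r_{\lambda}:=\rho(\bar p_{\lambda},p)$,
\[
\frac{1}{2\lambda}r_{\lambda}^{2}\leq C+c\,r_{\lambda},\qquad C:=F(p)-F(q_{0})+c\,\rho(q_{0},p)\geq0,
\]
where the sign $C\geq0$ is exactly the minorant at $x=p$. Solving this quadratic inequality gives $r_{\lambda}\leq\lambda c+\sqrt{\lambda^{2}c^{2}+2\lambda C}\to0$ as $\lambda\to0^{+}$, so $\bar p_{\lambda}\to p$ in metric. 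Since each $\bar p_{\lambda}\in\dom\partial F$, this places $p$ in $\overline{\dom\partial F}$, proving $\dom F\subseteq\overline{\dom\partial F}$ and hence $\overline{\dom F}\subseteq\overline{\dom\partial F}$.

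The only delicate points are those securing a lower bound on $F$: one must know that $\dom\partial F\neq\emptyset$ so that an affine-type minorant exists, which I would deduce from the well-definedness of $\prox_{\lambda}$ via Proposition \ref{prop:genFermat}, and one must check the sign $C\geq0$, which the same minorant provides. I expect the quadratic estimate forcing $r_{\lambda}\to0$ to be the technical heart, while the remainder is a direct assembly of the subgradient inequality, the Cauchy--Schwarz bound for $g_{p}$, and the minimizing property of the resolvent.
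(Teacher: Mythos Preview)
Your proof is correct and follows the same overall strategy as the paper: establish $\dom\partial F\subset\dom F$ directly, then approximate points of $\dom F$ by their proximal points $\prox_{\lambda}(p)$, which lie in $\dom\partial F$ by Proposition~\ref{prop:genFermat}. The only difference is in justifying $\prox_{\lambda}(p)\to p$ as $\lambda\to 0^{+}$. The paper simply invokes an external result (\cite[Proposition~2.2.26]{MR3241330}), which in fact gives this convergence for all $p\in\overline{\dom F}$ at once. You instead supply a self-contained argument on $\dom F$: the descent estimate $F(\bar p_{\lambda})+\tfrac{1}{2\lambda}r_{\lambda}^{2}\le F(p)$ combined with an affine-type minorant produced from any subgradient yields a quadratic bound forcing $r_{\lambda}\to 0$, after which closure handles $\overline{\dom F}$. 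Your route is slightly longer but has the merit of being independent of the cited reference; the paper's route is shorter but relies on a black box.
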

\begin{proof}
The inclusion $\overline{\dom \partial F} \subset \overline{\dom F}$ is immediately implied from the definition. Hence we only need to show the inclusion $\overline{\dom F} \subset \overline{\dom \partial F}$. Suppose that $x \in \overline{\dom F}$. According to \cite[Proposition 2.2.26]{MR3241330}, we know that $x = \lim_{\lambda \tendsto 0^{+}} \prox_{\lambda} (x)$. By Proposition \ref{prop:genFermat}, we may see that 
\[
\lambda^{-1}\rho(\prox_{\lambda}(x),x)\gamma_{\prox_{\lambda}(x),x} \in \partial F (\prox_{\lambda} (x)),
\]
for any $\lambda > 0$. This shows that $(\prox_{\lambda}(x))_{\lambda > 0}$ is a net in $\dom \partial F$. Therefore, $x$ as a limit point of this net must lies within the closure $\overline{\dom \partial F}$.
\end{proof}

\section{Resolvents and Yosida approximations}

Now, we shall define the resolvent for a given vector field $A : H \multimap TH$ and derive some of its fundamental properties. Results in this section are considered to be the main auxiliary tools used in the final section.

\begin{dfn}
Given $\lambda > 0$, the \emph{$\lambda$-resolvent} of $A$ is the mapping $J_{\lambda} : H \multimap H$ defined by
\[
J_{\lambda}x := \{z \in H \,|\, \lambda^{-1}\rho(z,x)\gamma_{z,x} \in Az\} \quad (\forall x \in H).
\]
Moreover, we define $J_{0}$ to be the identity mapping.
\end{dfn}

\begin{dfn}
A mapping $T : C \to C$ is called \emph{firmly nonexpansive} if for any $x,y \in C$, the function
\[
\varphi_{x,y}(t) := \rho(\gamma_{x,Tx}(t),\gamma_{y,Ty}(t))
\]
is nonincreasing on $t \in [0,1]$.
\end{dfn}

\begin{prop}\label{prop:properties-J}
Suppose that $A$ is a monotone vector field on $H$ satisfying the surjectivity condition. Then the following facts hold true:
\begin{enumerate}[label=\rm{(\roman*)}]
\item\label{cdn:singlevalued} $J_{\lambda}$ is well-defined on $H$ and is single-valued.
\item\label{cdn:J-NX} $J_{\lambda}$ is nonexpansive.
\item\label{cdn:Fixed=Stationary} $\Fix(J_{\lambda}) = A^{-1}(\0)$.
\item\label{cdn:J-ResolventIden} If $0 < \mu \leq \lambda$, then $J_{\lambda}(x) = J_{\mu} (u)$ with $u := \left(1 - \frac{\mu}{\lambda}\right)J_{\lambda}(x) \oplus \frac{\mu}{\lambda} x$, for each $x \in H$.
\item\label{cdn:J-FirmNX} $J_{\lambda}$ is firmly nonexpansive.
\end{enumerate}
\end{prop}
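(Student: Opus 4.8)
The plan is to establish the five assertions in the listed order, since each of the later ones draws on the earlier. For \ref{cdn:singlevalued} I would simply marry the two standing hypotheses: with $t=\lambda^{-1}>0$ the surjectivity condition produces at least one $z$ with $\lambda^{-1}\rho(z,x)\gamma_{z,x}\in Az$, so $J_\lambda x\neq\emptyset$, while Proposition \ref{prop:uniqep} (again with $t=\lambda^{-1}$) delivers uniqueness; the case $\lambda=0$ is the identity by fiat. Assertion \ref{cdn:Fixed=Stationary} is then immediate from unwinding the definition: $J_\lambda z=z$ says $\lambda^{-1}\rho(z,z)\gamma_{z,z}\in Az$, and because $\rho(z,z)=0$ this tangent vector is exactly $0_z$, so $z\in\Fix(J_\lambda)$ iff $0_z\in Az$ iff $z\in A^{-1}(\0)$.

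For the nonexpansiveness \ref{cdn:J-NX}, put $p=J_\lambda x$ and $q=J_\lambda y$, so that $\lambda^{-1}\rho(p,x)\gamma_{p,x}\in Ap$ and $\lambda^{-1}\rho(q,y)\gamma_{q,y}\in Aq$. Feeding these into the monotonicity inequality, multiplying through by $\rho(p,q)$, absorbing the factor into the second slot via the homogeneity $g_p(\cdot,t\gamma)=t\,g_p(\cdot,\gamma)$, and then invoking Proposition \ref{Prop:gp-ql} exactly as in the proof of Proposition \ref{prop:uniqep}, I reach the scalar inequality $\product{\vec{px}}{\vec{pq}}+\product{\vec{qy}}{\vec{qp}}\le 0$. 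The point is that expanding the two brackets by their definition and collecting terms rewrites this as $2\rho^{2}(p,q)\le \rho^{2}(x,q)+\rho^{2}(y,p)-\rho^{2}(p,x)-\rho^{2}(q,y)$, whose right-hand side is at most $2\rho(x,y)\rho(p,q)$ by the quadrilateral characterization of $\CAT(0)$ spaces, Proposition \ref{Prop:CAT(0)Char}\ref{cdn:quad-characterization-CAT(0)} applied to $x,y,p,q$. Dividing by $\rho(p,q)$ (the case $p=q$ being trivial) gives $\rho(p,q)\le\rho(x,y)$.

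The resolvent identity \ref{cdn:J-ResolventIden} I would establish by a direct verification rather than any manipulation. Set $p:=J_\lambda x$, so $\lambda^{-1}\rho(p,x)\gamma_{p,x}\in Ap$, and observe that $u=(1-\tfrac\mu\lambda)p\oplus\tfrac\mu\lambda x$ sits on the segment $\llbracket p,x\rrbracket$, at distance $\tfrac\mu\lambda\rho(p,x)$ from $p$ in the direction $\gamma_{p,x}$; hence $\gamma_{p,u}=\gamma_{p,x}$ and $\rho(p,u)=\tfrac\mu\lambda\rho(p,x)$. Substituting, $\mu^{-1}\rho(p,u)\gamma_{p,u}=\mu^{-1}\cdot\tfrac\mu\lambda\rho(p,x)\gamma_{p,x}=\lambda^{-1}\rho(p,x)\gamma_{p,x}\in Ap$, so $p\in J_\mu u$; by the single-valuedness from \ref{cdn:singlevalued} this forces $J_\mu u=p=J_\lambda x$.

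Finally, for firm nonexpansiveness \ref{cdn:J-FirmNX} the strategy is to reduce the full monotonicity of $\varphi_{x,y}$ to a single endpoint bound by means of convexity. Fix $x,y$, write $p=J_\lambda x$, $q=J_\lambda y$, and note $\gamma_{x,J_\lambda x}(t)=\gamma_{p,x}(1-t)$, similarly for $y$. First, $\varphi_{x,y}(t)=\rho(\gamma_{x,p}(t),\gamma_{y,q}(t))$ is convex in $t$ on $[0,1]$, by the standard convexity of the metric along pairs of geodesics in a $\CAT(0)$ space. Second, for each $t\in[0,1)$ I would apply the resolvent identity \ref{cdn:J-ResolventIden} with $\mu=(1-t)\lambda$ to recognise that the interpolant $\gamma_{x,p}(t)$ is precisely the point $u$ produced there, whence $J_{(1-t)\lambda}(\gamma_{x,p}(t))=J_\lambda x=p$ and likewise $J_{(1-t)\lambda}(\gamma_{y,q}(t))=q$; nonexpansiveness \ref{cdn:J-NX} of $J_{(1-t)\lambda}$ then yields $\varphi_{x,y}(1)=\rho(p,q)\le\rho(\gamma_{x,p}(t),\gamma_{y,q}(t))=\varphi_{x,y}(t)$. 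Thus $\varphi_{x,y}$ is convex on $[0,1]$ and attains its minimum at the right endpoint, and such a function is automatically nonincreasing, which is exactly the claim. I expect this last assertion to be the main obstacle: its crux is the identification $\gamma_{x,J_\lambda x}(t)=u$, which lets the resolvent identity promote the endpoint estimate to every $t$, combined with the convexity argument that converts \emph{minimum at $t=1$} into monotonicity of the whole profile.
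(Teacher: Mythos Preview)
Your proof is correct and follows essentially the same route as the paper: surjectivity plus Proposition~\ref{prop:uniqep} for \ref{cdn:singlevalued}, monotonicity plus Proposition~\ref{Prop:gp-ql} plus the quadrilateral inequality \eqref{eqn:quad-characterization-CAT(0)} for \ref{cdn:J-NX}, definition-chasing for \ref{cdn:Fixed=Stationary}, recognising $u$ on $\llbracket J_\lambda x,x\rrbracket$ for \ref{cdn:J-ResolventIden}, and convexity of $\varphi_{x,y}$ combined with \ref{cdn:J-ResolventIden} and \ref{cdn:J-NX} for \ref{cdn:J-FirmNX}. The only cosmetic difference is that the paper handles the degenerate case $x=J_\lambda x$ in \ref{cdn:J-ResolventIden} by an explicit case split via \ref{cdn:Fixed=Stationary}, whereas you absorb it into the main computation.
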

\begin{proof}
\ref{cdn:singlevalued} The well-definition as a single-valued mapping follows from the surjectivity condition and Proposition \ref{prop:uniqep}.

\ref{cdn:J-NX} Suppose that $x,y \in H$. By the monotonicity of $A$, we have
\begin{align*}
0 &\geq \rho(J_{\lambda}x,J_{\lambda}y) \left[ g_{J_{\lambda}x}(\lambda^{-1}\rho(J_{\lambda}x,x)\gamma_{J_{\lambda}x,x},\gamma_{J_{\lambda}x,J_{\lambda}y}) + g_{J_{\lambda}y}(\lambda^{-1}\rho(J_{\lambda}y,y)\gamma_{J_{\lambda}y,y},\gamma_{J_{\lambda}y,J_{\lambda}x})\right] \\
&\geq \product{\lambda^{-1}\gamma_{J_{\lambda}x,x}}{\gamma_{J_{\lambda}x,J_{\lambda}y}} + \product{\lambda^{-1}\gamma_{J_{\lambda}y,y}}{\gamma_{J_{\lambda}y,J_{\lambda}x}} \\
&= \frac{1}{2\lambda} \left( \rho^{2}(J_{\lambda}x,x) + \rho^{2}(J_{\lambda}x,J_{\lambda}y) - \rho^{2}(x,J_{\lambda}y) \right) \\
	&\qquad + \frac{1}{2\lambda}\left(\rho^{2}(J_{\lambda}y,y) + \rho^{2}(J_{\lambda}y,J_{\lambda}x) - \rho^{2}(y,J_{\lambda}x) \right).
\end{align*}
Rearranging and applying \eqref{eqn:quad-characterization-CAT(0)}, we get
\begin{align}
2\rho^{2}(J_{\lambda}x,J_{\lambda}y) &\leq \rho^{2}(x,J_{\lambda}y) + \rho^{2}(y,J_{\lambda}x) - \rho^{2}(J_{\lambda}x,x) - \rho^{2}(J_{\lambda}y,y) \label{eqn:firmlynonspreading}\\
&\leq 2\rho(x,y)\rho(J_{\lambda}x,J_{\lambda}y), \nonumber
\end{align}
which implies the nonexpansivity of $J_{\lambda}$.

\ref{cdn:Fixed=Stationary} The result is simply obtained from
\[
x \in \Fix(J_{\lambda}) \iff x = J_{\lambda}x \iff 0_{x} = \lambda^{-1}\gamma_{x,x} \in Ax \iff x \in A^{-1}(\0).	
\]

\ref{cdn:J-ResolventIden} Let $x \in H$ be arbitrary and $0 < \mu \leq \lambda$. Set $u := \left(1-\frac{\mu}{\lambda}\right)J_{\lambda}x \oplus \frac{\mu}{\lambda}x$.  If $x = J_{\lambda}x$, then $u = x \in A^{-1}\0$. It follows from \ref{cdn:Fixed=Stationary} that $J_{\lambda} x = x = u = J_{\mu} u$. Thus we suppose that $x \neq J_{\lambda}x$. In this case, we have $\angle_{J_{\lambda}x}(\gamma_{J_{\lambda}x,x},\gamma_{J_{\lambda}x,u}) = 0$ and $\zeta(\gamma_{J_{\lambda}x,x}) = \zeta(\gamma_{J_{\lambda}x,u}) = 1$. Also note that
\[
\lambda^{-1}\rho(J_{\lambda}x,x) = \mu^{-1}(\mu\lambda^{-1})\rho(J_{\lambda}x,x) = \mu^{-1}\rho(J_{\lambda}x,u).
\]
By the definition of a tangent space, we get
\[
\mu^{-1}\rho(J_{\lambda}x,u)\gamma_{J_{\lambda}x,u} = \lambda^{-1}\rho(J_{\lambda}x,x)\gamma_{J_{\lambda}x,x} \in AJ_{\lambda}x,
\]
which leads to the conclusion that $J_{\lambda}x = J_{\mu}u$.

\ref{cdn:J-FirmNX} Let $x,y \in H$. Since $t \mapsto \varphi_{x,y}(t)$ is  convex on $[0,1]$ (see \cite[Proposition 1.1.5]{MR3241330}), it is sufficient to show that $\varphi_{x,y}(1) \leq \varphi_{x,y}(t)$ for all $t \in (0,1)$. Indeed, for $t \in (0,1)$, we obtain from \ref{cdn:J-ResolventIden} and \ref{cdn:J-NX} the following:
\begin{align*}
\varphi_{x,y}(1) &= \rho(J_{\lambda}x,J_{\lambda}y) = \rho\left(J_{(1-t)\lambda}(tJ_{\lambda}x \oplus (1-t)x),J_{(1-t)\lambda}(tJ_{\lambda}y \oplus (1-t)y)\right) \\
&\leq \rho(tJ_{\lambda}x \oplus (1-t)x,tJ_{\lambda}y \oplus (1-t)y) = \varphi_{x,y}(t).
\end{align*}
Therefore, $J_{\lambda}$ is firmly nonexpansive.
\end{proof}

Henceforth in this paper, we need to assume the geodesic extension property on $H$ in order to define a \emph{negative geodesic}. Recall that a $\CAT(0)$ space $H$ has the \emph{geodesic extension property} if each unit speed geodesic $\gamma : [0,L] \to H$, there exists an isometry $\eta : \R \to H$ such that $\eta(t) = \gamma(t)$ whenever $t \in [0,L]$. It is clear that $H$ has the geodesic extension property if and only if for each $p,q \in H$, there exists $\eta : \R \to H$ such that $\rho(\eta(s),\eta(t)) = \rho(p,q)\abs{s-t}$ for all $s,t \in \R$ with $\eta(0) = p$ and $\eta(1) = q$. (See \cite{MR1744486} for a detailed description).

\begin{dfn}
Suppose that $H$ has the geodesic extension property, and that $p,x \in H$. By the geodesic extension property, there is a point $y \in H$ such that $\gamma_{x,y}(\frac{1}{2}) = p$. The \emph{negative geodesic} of $\gamma_{p,x}$ is then defined by $-\gamma_{p,x} := \gamma_{p,y}$.
\end{dfn}

\begin{rmk}
Notice that the point $y \in H$ appeared in the above definition is not necessarily unique. In this case, we fix one of such points $y$ for any given $x,z \in H$.
\end{rmk}

\begin{prop}\label{prop:negativeineq}
Suppose that $H$ has the geodesic extension property. Then
\[
g_{p}(-\gamma_{p,x},\gamma_{p,q}) \leq - g_{p}(\gamma_{p,x},\gamma_{p,q})
\]
for all $p,q,x \in H$.
\end{prop}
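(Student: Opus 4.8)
The plan is to reduce the inequality to a statement purely about Alexandrov angles and then exploit the triangle inequality for $\angle_p$ together with the monotonicity of cosine on $[0,\pi]$. First I would dispose of the degenerate cases. If $x = p$, then the condition $\gamma_{x,y}(\tfrac12) = p$ defining the negative geodesic forces $y = p$, so $-\gamma_{p,x} = \gamma_{p,p}$ is a zero geodesic; likewise if $q = p$ then $\gamma_{p,q}$ is a zero geodesic. In either case $\zeta$ vanishes on one of the arguments of $g_p$, so both sides of the claimed inequality equal $0$ and it holds with equality. Hence I may assume $x \neq p$ and $q \neq p$, in which case $y \neq p$ as well (otherwise $p$ would be the midpoint of $x$ and $p$, forcing $x = p$), and every geodesic in sight is nonzero.

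Next I would record that, in this nondegenerate situation, the formula $g_p(t_1\gamma_1,t_2\gamma_2) = t_1 t_2 \zeta(\gamma_1)\zeta(\gamma_2)\cos\angle_p(\gamma_1,\gamma_2)$ together with $\norm{\gamma_{p,x}}_p = \norm{\gamma_{p,y}}_p = \norm{\gamma_{p,q}}_p = 1$ gives $g_p(-\gamma_{p,x},\gamma_{p,q}) = \cos\angle_p(\gamma_{p,y},\gamma_{p,q})$ and $g_p(\gamma_{p,x},\gamma_{p,q}) = \cos\angle_p(\gamma_{p,x},\gamma_{p,q})$. Thus the assertion becomes the angular inequality $\cos\angle_p(\gamma_{p,y},\gamma_{p,q}) \leq -\cos\angle_p(\gamma_{p,x},\gamma_{p,q})$.

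The heart of the argument is the identity $\angle_p(\gamma_{p,x},\gamma_{p,y}) = \pi$. To establish it I would use that $p$ is the midpoint of $x$ and $y$, so the concatenation $x \to p \to y$ is a reparametrization of the single geodesic $\gamma_{x,y}$; consequently, for the points $x_s := \gamma_{p,x}(s)$ and $y_t := \gamma_{p,y}(t)$ with small $s,t > 0$, the point $p$ lies between $x_s$ and $y_t$ on this geodesic and therefore $\rho(x_s,y_t) = \rho(p,x_s) + \rho(p,y_t)$. Substituting this betweenness relation into the comparison-angle formula gives $\cos\overline{\angle}_p(x_s,y_t) = -1$ identically, so letting $s,t \to 0^+$ yields $\angle_p(\gamma_{p,x},\gamma_{p,y}) = \pi$. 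I expect this to be the main (if modest) obstacle, since it is precisely where the geodesic extension property is genuinely used and where one must justify the betweenness relation $\rho(x_s,y_t) = \rho(p,x_s) + \rho(p,y_t)$.

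Finally, invoking that $\angle_p$ is a metric on the space of directions $S_p$, the triangle inequality gives $\pi = \angle_p(\gamma_{p,x},\gamma_{p,y}) \leq \angle_p(\gamma_{p,x},\gamma_{p,q}) + \angle_p(\gamma_{p,q},\gamma_{p,y})$, whence $\angle_p(\gamma_{p,q},\gamma_{p,y}) \geq \pi - \angle_p(\gamma_{p,x},\gamma_{p,q})$. Since both quantities lie in $[0,\pi]$, applying the decreasing function $\cos$ on $[0,\pi]$ yields $\cos\angle_p(\gamma_{p,q},\gamma_{p,y}) \leq \cos\bigl(\pi - \angle_p(\gamma_{p,x},\gamma_{p,q})\bigr) = -\cos\angle_p(\gamma_{p,x},\gamma_{p,q})$, which is exactly the desired inequality once translated back into the $g_p$ notation.
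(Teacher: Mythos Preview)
Your proposal is correct and follows essentially the same route as the paper: dispose of the degenerate cases, reduce the inequality to $\cos\angle_p(\gamma_{p,y},\gamma_{p,q}) \leq -\cos\angle_p(\gamma_{p,x},\gamma_{p,q})$, invoke $\angle_p(\gamma_{p,x},\gamma_{p,y}) = \pi$, apply the triangle inequality for $\angle_p$, and use the monotonicity of cosine on $[0,\pi]$. Your version is slightly more explicit than the paper's in that you justify the identity $\angle_p(\gamma_{p,x},\gamma_{p,y}) = \pi$ via the betweenness relation, whereas the paper simply asserts it.
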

\begin{proof}
Suppose that $p \in H$ and $q,x \in H \setminus\{p\}$, otherwise there is nothing to be proved. Suppose that $y \in H$ satifies $\gamma_{x,y}(1/2) = p$. By the triangle inequality, it follows that
\[
\pi = \angle_{p}(\gamma_{p,x},\gamma_{p,y}) \leq \angle_{p}(\gamma_{p,x},\gamma_{p,q}) + \angle_{p}(\gamma_{p,y},\gamma_{p,q}).
\]
Rearranging the inequality yields $\angle_{p}(\gamma_{p,x},\gamma_{p,q}) \geq \pi - \angle_{p}(\gamma_{p,y},\gamma_{p,q})$, and we further have
\[
\cos \angle_{p}(\gamma_{p,x},\gamma_{p,q}) \leq \cos \left(\pi - \angle_{p}(\gamma_{p,y},\gamma_{p,q})\right) = - \cos \angle_{p}(\gamma_{p,y},\gamma_{p,q}).
\]
Using this fact, we obtain
\begin{align*}
g_{p}(-\gamma_{p,x},\gamma_{p,q}) &= g_{p}(\gamma_{p,y},\gamma_{p,q}) \\
&= \cos \angle_{p}(\gamma_{p,y},\gamma_{p,q}) \\
&\leq - \cos \angle_{p}(\gamma_{p,x},\gamma_{p,q}) \\
&= -g_{p}(\gamma_{p,x},\gamma_{p,q}). \tag*{\qedhere}
\end{align*}
\end{proof}

Now that we have defined the negative geodesic, we use it in the definition of the so-called complemenatary vector field.
\begin{dfn}
Given a mapping $T : H \to H$. The \emph{complementary vector field} of $T$, denoted by $A_{T} : H \to TH$ is defined by
\[
A_{T}x := \rho(x,Tx)(-\gamma_{x,Tx}) \quad (\forall x \in H).
\]
\end{dfn}

It is immediate to observe that $\Fix(T) = A_{T}^{-1}\0$. The next proposition shows that a complementary vector field is monotone if applied to a nonexpansive mapping.
\begin{prop}
Suppose that $H$ has the geodesic extension property. If $T : H \to H$ is nonexpansive, then $A_{T}$ is monotone.
\end{prop}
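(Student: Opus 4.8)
The plan is to verify the defining inequality of monotonicity for $A_T$ directly, namely that
\[
g_p(A_T p,\gamma_{p,q}) + g_q(A_T q,\gamma_{q,p}) \le 0
\]
for all $p,q \in H$. Fix such $p,q$. If $p=q$, then $\gamma_{p,q}$ and $\gamma_{q,p}$ are zero geodesics, so both summands vanish and nothing is to be proved; thus I assume $p \ne q$. Likewise, if $Tp=p$ then $A_T p = 0_p$ and the first summand is zero (similarly for $q$), so I may assume that all the geodesics occurring below are nonzero. The core of the argument is to bound each summand by a scalar expression in the mutual distances of $p,q,Tp,Tq$ and then to invoke the nonexpansivity of $T$.

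Writing $A_T p = \rho(p,Tp)(-\gamma_{p,Tp})$, I would first use the homogeneity $g_p(t\gamma_1,\gamma_2) = t\,g_p(\gamma_1,\gamma_2)$ to extract the factor $\rho(p,Tp)$, and then apply Proposition \ref{prop:negativeineq} to pass from the negative geodesic to $\gamma_{p,Tp}$, obtaining
\[
g_p(A_T p,\gamma_{p,q}) \le -\rho(p,Tp)\,g_p(\gamma_{p,Tp},\gamma_{p,q}).
\]
Multiplying by $\rho(p,q)>0$, recognizing the right-hand side as $-g_p(\rho(p,Tp)\gamma_{p,Tp},\rho(p,q)\gamma_{p,q})$, and applying the Cauchy--Schwarz-type bound of Proposition \ref{Prop:gp-ql} then gives
\[
\rho(p,q)\,g_p(A_T p,\gamma_{p,q}) \le -\tfrac12\big[\rho^2(p,Tp)+\rho^2(p,q)-\rho^2(Tp,q)\big].
\]
Adding this to the analogous estimate with $p$ and $q$ interchanged reduces the entire claim to the single scalar inequality
\begin{equation*}
\rho^2(p,Tp)+\rho^2(q,Tq)+2\rho^2(p,q) \ge \rho^2(Tp,q)+\rho^2(Tq,p). \tag{$\star$}
\end{equation*}

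The main obstacle is establishing $(\star)$, which is the only place where the hypotheses on $T$ and $H$ genuinely enter. The plan is to apply the quadrilateral characterization \eqref{eqn:quad-characterization-CAT(0)} of the $\CAT(0)$ inequality with the vertex assignment $x=Tp$, $y=Tq$, $u=p$, $v=q$, which yields
\[
\rho^2(Tp,q)+\rho^2(Tq,p) \le \rho^2(Tp,p)+\rho^2(Tq,q)+2\rho(Tp,Tq)\rho(p,q),
\]
and then to absorb the cross term via the nonexpansivity $\rho(Tp,Tq)\le\rho(p,q)$, which bounds $2\rho(Tp,Tq)\rho(p,q)\le 2\rho^2(p,q)$; this is precisely $(\star)$. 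The delicate point is choosing the correct pairing of the four points $p,q,Tp,Tq$ in \eqref{eqn:quad-characterization-CAT(0)}, since a different assignment places the $\CAT(0)$ cross term on the opposite side of the inequality and produces no usable bound. Once $(\star)$ is in hand, the summed estimate has nonpositive right-hand side, and dividing through by $\rho(p,q)>0$ delivers the monotonicity inequality, completing the proof.
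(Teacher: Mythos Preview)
Your proof is correct and follows essentially the same route as the paper: both use Proposition~\ref{prop:negativeineq} to pass from $-\gamma_{p,Tp}$ to $\gamma_{p,Tp}$, then Proposition~\ref{Prop:gp-ql} to compare $g_p$ with the quasi-inner product $\product{\cdot}{\cdot}$, and finally combine the quadrilateral inequality~\eqref{eqn:quad-characterization-CAT(0)} with the nonexpansivity of $T$ to close the estimate. The only difference is presentational---you isolate the scalar inequality $(\star)$ explicitly and treat the two summands separately, whereas the paper runs the computation on the sum in a single chain---but the ingredients and logic are identical.
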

\begin{proof}
Let $x,y \in H$. In view of \eqref{eqn:quad-characterization-CAT(0)}, the nonexpansivity of $T$, Propositions \ref{Prop:gp-ql} and \ref{prop:negativeineq}, we have
\begin{align*}
\lefteqn{2\rho(x,y)[g_{x}(A_{T}x,\gamma_{x,y}) + g_{y}(A_{T}y,\gamma_{y,x})]} \\
&\leq -2\rho(x,y)[g_{x}(\rho(x,Tx)\gamma_{x,Tx},\gamma_{x,y}) + g_{y}(\rho(y,Ty)\gamma_{y,Ty}),\gamma_{y,x})] \\
&\leq -2[g_{x}(\rho(x,Tx)\gamma_{x,Tx},\rho(x,y)\gamma_{x,y}) + g_{y}(\rho(y,Ty)\gamma_{y,Ty}),\rho(x,y)\gamma_{y,x})] \\
&\leq -2[\product{\vec{xTx}}{\vec{xy}} + \product{\vec{yTy}}{\vec{yx}}] \\
&= \rho^{2}(y,Tx) + \rho^{2}(x,Ty) - \rho^{2}(x,Tx) - \rho^{2}(y,Ty) -2 \rho^{2}(x,y) \\
&\leq 2\rho(x,y)\rho(Tx,Ty) - 2\rho^{2}(x,y) \leq 0.
\end{align*}
Therefore, $A_{T}$ is monotone.
\end{proof}

By using the complementary vector field defined above, we can construct an important device called the Yosida approximation.
\begin{dfn}
Suppose that $H$ has the geodesic extension property, $A : H \multimap TH$ is a monotone vector field with surjectivity condition. Let $\lambda > 0$. The \emph{$\lambda$-Yosida approximation} of $A$, denoted by $A_{\lambda}$, is defined by
\[
A_{\lambda}x := \lambda^{-1}A_{J_{\lambda}}x, \quad \forall x \in H.
\]
\end{dfn}
One may see in the following proposition that Yosida approximations can produce a useful estimate for each $x \in \dom A$.
\begin{prop}\label{prop:HeartWarmingIneq}
Suppose that $H$ has the geodesic extension property and $A$ is a monotone vector field with the surjectivity condition. Then, the inequality
\[
\norm{A_{\lambda}x} \leq \abs{Ax} := \inf_{x^{\ast} \in Ax} \norm{x^{\ast}}
\]
holds for every $\lambda > 0$ and $x \in H$, where the convention $\inf \emptyset = +\infty$.
\end{prop}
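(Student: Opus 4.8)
The plan is to test the defining vector of the resolvent against an arbitrary element of $Ax$ by means of the monotonicity of $A$. If $Ax = \emptyset$, then $\abs{Ax} = +\infty$ and there is nothing to prove, so I would fix an arbitrary $x^{\ast} \in Ax$ and reduce the claim to showing $\norm{A_{\lambda}x} \leq \norm{x^{\ast}}$; passing to the infimum over all such $x^{\ast}$ then finishes.

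Writing $p := J_{\lambda}x$, the definition of the resolvent gives $(p,\lambda^{-1}\rho(p,x)\gamma_{p,x}) \in \gr(A)$, while $(x,x^{\ast}) \in \gr(A)$ by hypothesis. Feeding these two pairs into the monotonicity inequality $g_{p}(\eta,\gamma_{p,q}) \leq -g_{q}(\nu,\gamma_{q,p})$ yields
\[
g_{x}(x^{\ast},\gamma_{x,p}) \leq -g_{p}\!\left(\lambda^{-1}\rho(p,x)\gamma_{p,x},\gamma_{p,x}\right).
\]
The crucial observation is that on the right both tangent vectors lie along the single geodesic $\gamma_{p,x}$, so the enclosed angle vanishes and the $g_{p}$-term collapses to $\lambda^{-1}\rho(p,x)$. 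Matching this against the definition $A_{\lambda}x = \lambda^{-1}\rho(x,J_{\lambda}x)(-\gamma_{x,J_{\lambda}x})$---whose norm is $\lambda^{-1}\rho(x,p)$ because the negative geodesic is again unit-speed---the displayed inequality becomes precisely $\norm{A_{\lambda}x} \leq -g_{x}(x^{\ast},\gamma_{x,p})$.

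To close, I would invoke the Cauchy--Schwarz analogue $g_{x}(t_{1}\gamma_{1},t_{2}\gamma_{2}) \leq \norm{t_{1}\gamma_{1}}_{x}\norm{t_{2}\gamma_{2}}_{x}$ (together with $\abs{\cos} \leq 1$ to control the sign) so as to bound $-g_{x}(x^{\ast},\gamma_{x,p}) \leq \norm{x^{\ast}}\norm{\gamma_{x,p}} = \norm{x^{\ast}}$, the last factor being a unit vector. This gives $\norm{A_{\lambda}x} \leq \norm{x^{\ast}}$ for every $x^{\ast} \in Ax$, and the infimum delivers the estimate. The degenerate case $x = J_{\lambda}x$ is harmless: there $\gamma_{x,p}$ is a zero geodesic, $\norm{A_{\lambda}x} = 0$, and the bound is trivial. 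I do not anticipate a genuine obstacle; the only delicate bookkeeping is confirming that $\norm{A_{\lambda}x}$ and the collapsed $g_{p}$-term coincide with the single quantity $\lambda^{-1}\rho(x,J_{\lambda}x)$, which is exactly what makes the monotonicity inequality produce the sign needed for the bound.
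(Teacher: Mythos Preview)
Your argument is correct and takes a genuinely different route from the paper. The paper proceeds by invoking the geodesic extension property: given $u^{\ast}\in Ax$ with representation $t\rho(x,u)\gamma_{x,u}$, it extends the geodesic through $x$ and $u$ to a point $v:=\gamma(\lambda t)$, checks that $\lambda^{-1}\rho(x,v)\gamma_{x,v}=u^{\ast}\in Ax$ so that $x=J_{\lambda}v$, and then applies the nonexpansivity of $J_{\lambda}$ to get $\lambda^{-1}\rho(x,J_{\lambda}x)=\lambda^{-1}\rho(J_{\lambda}v,J_{\lambda}x)\leq\lambda^{-1}\rho(v,x)=\norm{u^{\ast}}$. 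Your approach instead feeds the two graph elements $(x,x^{\ast})$ and $(J_{\lambda}x,\lambda^{-1}\rho(J_{\lambda}x,x)\gamma_{J_{\lambda}x,x})$ directly into the monotonicity inequality and closes with the two-sided Cauchy--Schwarz bound $\abs{g_{x}(\cdot,\cdot)}\leq\norm{\cdot}\norm{\cdot}$. This is shorter and, notably, the inequality $\lambda^{-1}\rho(x,J_{\lambda}x)\leq\norm{x^{\ast}}$ you derive makes no use of the geodesic extension property; the extension enters only through the \emph{definition} of $A_{\lambda}$ via a negative geodesic, while $\norm{A_{\lambda}x}=\lambda^{-1}\rho(x,J_{\lambda}x)$ is independent of that choice. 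In light of the paper's open question~Q2, your argument is a step toward removing the extension hypothesis from this proposition.
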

\begin{proof}
If $x \not\in \dom A$, the inequality always holds. Suppose now that $x \in \dom A$. If $x \in A^{-1}\0$, then $\norm{A_{\lambda}x} = \norm{0_{x}} = 0 = \abs{Ax}$. On the other hand, if $x \not\in A^{-1}\0$ we pick $u^{\ast} \in Ax$ arbitrarily. For some $t > 0$ and $u \in H\setminus\{x\}$, we have $u^{\ast} = t\rho(x,u)\gamma_{x,u}$ and thus $\norm{u^{\ast}} = t\rho(x,u)$.

Since $H$ has the geodesic extension property, there exists a map $\gamma : \R \to H$ such that $\rho(\gamma(s),\gamma(t)) = \rho(x,u)\abs{s-t}$ for all $s,t \in \R$ with $\gamma(0) = x$ and $\gamma(1) = u$. Let $\lambda > 0$. Put $v := \gamma(\lambda t)$ and $v^{\ast} := \lambda^{-1}\rho(x,v)\gamma_{x,v}$. Observe that $\angle_{x}(\gamma_{x,u},\gamma_{x,v}) = 0$ and that
\[
\norm{v^{\ast}} = \lambda^{-1}\rho(x,v) = \lambda^{-1}\lambda t \rho(x,u) = t\rho(x,u) = \norm{u^{\ast}}.
\]
By the definition of a tangent space, we have $\lambda^{-1}\rho(x,v)\gamma_{x,v} = v^{\ast} = u^{\ast} \in Ax$. This means $x = J_{\lambda}v$. As a consequence, we obtain
\[
\norm{A_{\lambda}x} = \lambda^{-1}\rho(x,J_{\lambda}x) = \lambda^{-1}\rho(J_{\lambda}v,J_{\lambda}x) \leq \lambda^{-1}\rho(v,x) = \norm{v^{\ast}} = \norm{u^{\ast}}.
\]
Since $u^{\ast} \in Ax$ is chosen arbitrarily, we finally have $\norm{A_{\lambda}x} \leq \abs{Ax}$.
\end{proof}

\begin{prop}\label{prop:convexity}
If $H$ satisfies the geodesic extension property and $A : H \multimap TH$ is a monotone vector field with the surjectivity condition, then $\overline{\dom A}$ is convex.
\end{prop}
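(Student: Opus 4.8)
The plan is to reduce the convexity of $\overline{\dom A}$ to a single strong-convergence fact: for every $x \in \overline{\dom A}$ one has $J_{\lambda}x \tendsto x$ as $\lambda \tendsto 0^{+}$. Granting this, fix $x_{0},x_{1} \in \overline{\dom A}$ and a point $z := (1-s)x_{0} \oplus s x_{1}$ on the geodesic joining them, with $s \in (0,1)$. By the very definition of the resolvent, $\lambda^{-1}\rho(J_{\lambda}z,z)\gamma_{J_{\lambda}z,z} \in A(J_{\lambda}z)$, so each $J_{\lambda}z$ already lies in $\dom A$; hence it suffices to prove that $J_{\lambda}z \tendsto z$, for then $z$ is a strong limit of points of $\dom A$ and therefore belongs to $\overline{\dom A}$.

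First I would establish the convergence lemma. For $x \in \dom A$, the Yosida estimate (Proposition \ref{prop:HeartWarmingIneq}) gives $\rho(x,J_{\lambda}x) = \lambda\norm{A_{\lambda}x} \leq \lambda\abs{Ax}$, and since $\abs{Ax} < \infty$ this forces $\rho(x,J_{\lambda}x) \tendsto 0$. To pass to the closure, fix $x \in \overline{\dom A}$ and $\epsilon > 0$, choose $x' \in \dom A$ with $\rho(x,x') < \epsilon$, and invoke the nonexpansivity of $J_{\lambda}$ (Proposition \ref{prop:properties-J}\ref{cdn:J-NX}):
\[
\rho(x,J_{\lambda}x) \leq \rho(x,x') + \rho(x',J_{\lambda}x') + \rho(J_{\lambda}x',J_{\lambda}x) \leq 2\epsilon + \lambda\abs{Ax'};
\]
letting $\lambda \tendsto 0^{+}$ and then $\epsilon \tendsto 0^{+}$ yields $J_{\lambda}x \tendsto x$.

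For the main step I would combine nonexpansivity with the \eqref{eqn:CN} inequality. Writing $L := \rho(x_{0},x_{1})$, nonexpansivity gives $\rho(J_{\lambda}z,J_{\lambda}x_{i}) \leq \rho(z,x_{i})$ for $i=0,1$, so together with the convergence lemma applied at $x_{0}$ and $x_{1}$,
\[
\rho(J_{\lambda}z,x_{i}) \leq \rho(J_{\lambda}z,J_{\lambda}x_{i}) + \rho(J_{\lambda}x_{i},x_{i}) \leq \rho(z,x_{i}) + \delta_{\lambda}^{(i)},
\]
where $\delta_{\lambda}^{(i)} := \rho(J_{\lambda}x_{i},x_{i}) \tendsto 0$. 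Applying \eqref{eqn:CN} to the normalized geodesic from $x_{0}$ to $x_{1}$ at parameter $s$, with the point $v := J_{\lambda}z$, gives
\[
\rho^{2}(z,J_{\lambda}z) \leq (1-s)\rho^{2}(x_{0},J_{\lambda}z) + s\,\rho^{2}(x_{1},J_{\lambda}z) - s(1-s)L^{2}.
\]
Since $\rho(z,x_{0}) = sL$ and $\rho(z,x_{1}) = (1-s)L$, inserting the bounds above and letting $\lambda \tendsto 0^{+}$ drives the right-hand side to $(1-s)(sL)^{2} + s((1-s)L)^{2} - s(1-s)L^{2} = 0$. Hence $\rho(z,J_{\lambda}z) \tendsto 0$, which is what we needed.

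The triangle-inequality estimates are routine; the one point deserving care is the exact cancellation of the leading terms in the final display, which is precisely the mechanism that pins $J_{\lambda}z$ down to $z$. Geometrically, \eqref{eqn:CN} is an equality-at-the-geodesic statement, so a point whose distances to $x_{0}$ and $x_{1}$ approach the minimal values $sL$ and $(1-s)L$ must, in a $\CAT(0)$ space, approach the unique geodesic point $z$ realizing them. I expect no essential obstacle beyond checking that this squeeze is sharp enough to conclude, which the computation above confirms.
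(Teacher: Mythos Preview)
Your proof is correct, and in fact cleaner than the paper's. Both arguments begin identically: establish $J_{\lambda}x \to x$ for every $x \in \overline{\dom A}$ via the Yosida bound on $\dom A$ plus a density-and-nonexpansivity extension, and then note that $J_{\lambda}z \in \dom A$ for every $\lambda>0$, so the convexity reduces to proving $J_{\lambda}z \to z$ for the midpoint $z$. The difference is in how this last convergence is obtained. You apply \eqref{eqn:CN} \emph{pointwise in $\lambda$} with $v = J_{\lambda}z$, bounding $\rho^{2}(z,J_{\lambda}z)$ above by an expression that tends to zero, which gives strong convergence directly. The paper instead extracts a $\Delta$-convergent subsequence of $(J_{\lambda_{k}}z)$, applies \eqref{eqn:CN} together with the $\Delta$-lower semicontinuity of $\rho(\cdot,x_{i})$ to identify the $\Delta$-limit as $z$, argues that the full sequence $\Delta$-converges, and then upgrades to strong convergence via the Kadec--Klee property. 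Your route bypasses the entire $\Delta$-convergence apparatus (subsequences, $\Delta$-lower semicontinuity, Proposition~\ref{prop:singletonOmegaDelta}, Lemma~\ref{lem:Kadec-Klee}) and is strictly more elementary; the paper's detour gains nothing here.
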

\begin{proof}
First, we make a claim that $\lim_{\lambda \tendsto 0^{+}} J_{\lambda}x = x$ for any $x \in \overline{\dom A}$. For the moment, consider $x \in \dom A$. For each $\lambda > 0$, we have
\[
\rho(x,J_{\lambda}x) = \lambda\norm{A_{\lambda}x} \leq \lambda \abs{Ax} < +\infty.
\]
Letting $\lambda \tendsto 0^{+}$, we get $\lim_{\lambda \tendsto 0^{+}} J_{\lambda}x = x$. Next, suppose that $x \in \overline{\dom A}$ and let $(x^{k})$ be a sequence in $\dom A$ which converges to $x$. Since $J_{\lambda}$ is nonexpansive, we have
\begin{align*}
\rho(x,J_{\lambda}x) &\leq \rho(x,x^{k}) + \rho(x^{k},J_{\lambda}x^{k}) + \rho(J_{\lambda}x^{k},J_{\lambda}x) \\
&\leq 2\rho(x,x^{k}) + \rho(x^{k},J_{\lambda}x^{k})
\end{align*}
and hence
\[
\limsup_{\lambda \tendsto 0^{+}} \rho(x,J_{\lambda}x)  \leq \limsup_{\lambda \tendsto 0^{+}} [2\rho(x,x^{k}) + \rho(x^{k},J_{\lambda}x^{k})] = 2\rho(x,x^{k}).
\]
Letting $k \tendsto \infty$ on the right hand side then proves the claim.

Put $Q := \{x \in H \,|\, \lim_{\lambda \tendsto 0^{+}} J_{\lambda}x = x\}$. Sicne $J_{\lambda} x \in \dom A$ for all $\lambda > 0$ and $x \in Q$, its limit $x$ is included in $\overline{\dom A}$. Hence $Q \subset \overline{\dom A}$. On the other hand, we have $\overline{\dom A} \subset Q$. We thus have $Q = \overline{\dom A}$.

Now, take $x,y \in Q$ and $t \in (0,1)$. Put $z := (1-t)x \oplus ty$. For any $\lambda > 0$, we obtain
\begin{equation}\label{eqn:est-clD(A)}
\rho(J_{\lambda}z,x) \leq \rho(J_{\lambda}z,J_{\lambda}x) + \rho(J_{\lambda}x,x) \leq \rho(x,z) + \rho(x,J_{\lambda}x) = t\rho(x,y) + \rho(x,J_{\lambda}x),
\end{equation}
and similarly have
\begin{equation}\label{eqn:est-clD(A)2}
\rho(J_{\lambda}z,y) \leq \rho(J_{\lambda}z,J_{\lambda}y) + \rho(J_{\lambda}y,y) \leq \rho(y,z) + \rho(y,J_{\lambda}y) = (1-t)\rho(x,y) + \rho(y,J_{\lambda}y).
\end{equation}
Take any sequence $(\lambda_{k})$ in $(0,\infty)$ such that $\lambda_{k} \tendsto 0$. Then it follows from \eqref{eqn:est-clD(A)} and $J_{\lambda_{k}}x \tendsto x$ that the sequence $(J_{\lambda_{k}}z)$ is bounded. So, $(J_{\lambda_{k}}z)$ contains a $\Delta$-convergent subsequence $(J_{\lambda_{k_{i}}}z)$ with $\Delta$-limit $p \in H$. Putting $\lambda = \lambda_{k_{i}}$ in \eqref{eqn:est-clD(A)} and \eqref{eqn:est-clD(A)2} and letting $i \tendsto \infty$, we obtain
\begin{equation}\label{eqn:liminf1}
\liminf_{i \tendsto \infty} \rho(J_{\lambda_{k_{i}}}z,x) \leq t\rho(x,y)
\end{equation}
and also
\begin{equation}\label{eqn:liminf2}
\liminf_{i \tendsto \infty} \rho(J_{\lambda_{k_{i}}}z,y) \leq (1-t)\rho(x,y).
\end{equation}
By \eqref{eqn:CN}, the $\Delta$-lower semicontinuity of $\rho(x,\cdot)$, and ineqalities \eqref{eqn:liminf1} as well as \eqref{eqn:liminf2}, we have
\begin{align*}
\rho^{2}(p,z) &\leq (1-t)\rho^{2}(x,p) + t\rho^{2}(y,p) - t(1-t)\rho^{2}(x,y) \\
& \leq (1-t)\left[\liminf_{i \tendsto \infty} \rho(x,J_{\lambda_{k_{i}}}z)\right]^{2} + t\left[\liminf_{i \tendsto \infty} \rho(y,J_{\lambda_{k_{i}}}z)\right]^{2} - t(1-t)\rho^{2}(x,y) \\
& \leq (1-t)t^{2}\rho^{2}(x,y) + t(1-t)^{2}\rho^{2}(x,y) - t(1-t)\rho^{2}(x,y) \\
& = 0.
\end{align*}
Hence $(J_{\lambda_{k_{i}}}z)$ is $\Delta$-convergent to $z$, which implies that $\omega_{\Delta}(J_{\lambda_{k}}z) = \{z\}$. By Proposition \ref{prop:singletonOmegaDelta} we conclude that $(J_{\lambda_{k}}z)$ is $\Delta$-convergent to $z$.

Next, observe that
\[
\rho(J_{\lambda_{k}}z,x) \leq \rho(J_{\lambda_{k}}z,J_{\lambda_{k}}x) + \rho(J_{\lambda_{k}}x,x) \leq \rho(z,x) + \rho(J_{\lambda_{k}}x,x).
\]
This inequality together with the $\Delta$-lower semicontinuity of $\rho(\cdot,x)$, we obtain
\[
\rho(z,x) \leq \liminf_{k \tendsto \infty} \rho(J_{\lambda_{k}}z,x) \leq \limsup_{k \tendsto \infty} \rho(J_{\lambda_{k}}z,x) \leq \rho(z,x),
\]
and so $\lim_{k} \rho(J_{\lambda_{k}}z,x) = \rho(z,x)$. By the Kadec-Klee property (Lemma \ref{lem:Kadec-Klee}), we get $\lim_{k} J_{\lambda_{k}}z = z$. Since $(\lambda_{k})$ is an arbitrary sequence of positive numbers with $\lambda_{k} \tendsto 0$, we can conclude that $\lim_{\lambda \tendsto 0^{+}} J_{\lambda}z = z$ and so $z \in Q$. This shows the convexity of $Q = \overline{\dom A}$.
\end{proof}

\begin{thm}\label{thm:Asymp0}
Suppose that $H$ satisfies the geodesic extension property, and $A : H \multimap TH$ is a monotone vector field with the surjectivity condition. Then, 
\[
\lim_{\lambda \tendsto 0^{+}} J_{\lambda}x = P_{\overline{\dom A}} x,
\]
for every $x \in H$.
\end{thm}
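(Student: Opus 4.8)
The plan is to obtain the conclusion---in fact \emph{strong} convergence---directly from the firm nonexpansivity of the resolvent, without routing through $\Delta$-convergence. Write $C := \overline{\dom A}$ and $u := P_{C}x$. The surjectivity condition guarantees $\dom A \neq \emptyset$, and Proposition~\ref{prop:convexity} tells us that $C$ is closed and convex, so Theorem~\ref{thm:projection} makes $u$ well defined and single valued. The only ingredient I would import from the proof of Proposition~\ref{prop:convexity} is the identity $\overline{\dom A} = \{w \in H \mid \lim_{\lambda \tendsto 0^{+}} J_{\lambda}w = w\}$; applied to $w = u \in C$ it gives $J_{\lambda}u \tendsto u$ as $\lambda \tendsto 0^{+}$.

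Next I would feed the second slot $y := u$ into the firm-nonexpansivity estimate \eqref{eqn:firmlynonspreading}, which holds for arbitrary points of $H$, to get
\[
2\rho^{2}(J_{\lambda}x,J_{\lambda}u) \leq \rho^{2}(x,J_{\lambda}u) + \rho^{2}(u,J_{\lambda}x) - \rho^{2}(J_{\lambda}x,x) - \rho^{2}(J_{\lambda}u,u).
\]
Nonexpansivity of $J_{\lambda}$ bounds $\rho(J_{\lambda}x,u) \leq \rho(x,u) + \rho(J_{\lambda}u,u)$ and yields $\abs{\rho(J_{\lambda}x,J_{\lambda}u) - \rho(J_{\lambda}x,u)} \leq \rho(J_{\lambda}u,u) \tendsto 0$, whence $\rho^{2}(J_{\lambda}x,J_{\lambda}u) = \rho^{2}(J_{\lambda}x,u) + o(1)$. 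Letting $\lambda \tendsto 0^{+}$ and using $J_{\lambda}u \tendsto u$ (so $\rho(x,J_{\lambda}u) \tendsto \rho(x,u)$ and $\rho(J_{\lambda}u,u) \tendsto 0$), the displayed line rearranges into
\[
\rho^{2}(J_{\lambda}x,u) + \rho^{2}(J_{\lambda}x,x) \leq \rho^{2}(x,u) + o(1) \qquad (\lambda \tendsto 0^{+}).
\]

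To finish, I would use that $J_{\lambda}x \in \dom A \subset C$ while $u = P_{C}x$ is the nearest point of $C$ to $x$, so $\rho(J_{\lambda}x,x) \geq \rho(x,u)$. Substituting this lower bound into the previous inequality cancels $\rho^{2}(x,u)$ and leaves $\rho^{2}(J_{\lambda}x,u) \leq o(1)$, that is, $\lim_{\lambda \tendsto 0^{+}}\rho(J_{\lambda}x,u) = 0$, which is exactly $J_{\lambda}x \tendsto P_{C}x$ in the metric.

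The only delicate point is the limit bookkeeping in the middle step: the replacement $\rho^{2}(J_{\lambda}x,J_{\lambda}u) = \rho^{2}(J_{\lambda}x,u) + o(1)$ must be justified, and this is precisely where the boundedness of $\rho(J_{\lambda}x,u)$ (supplied by nonexpansivity) is needed; I expect this to be the main obstacle, albeit a mild one. Should one prefer an argument parallel to Proposition~\ref{prop:convexity}, an alternative is to show $(J_{\lambda_{k}}x)$ is bounded for $\lambda_{k}\tendsto 0$, extract a $\Delta$-subsequential limit $p$, verify the projection inequality $\rho^{2}(x,p) + \rho^{2}(p,z) \leq \rho^{2}(x,z)$ for all $z \in \dom A$ to identify $p = P_{C}x$, conclude $\Delta$-convergence via Proposition~\ref{prop:singletonOmegaDelta}, and upgrade to strong convergence through the Kadec--Klee property (Lemma~\ref{lem:Kadec-Klee}); there the genuinely harder step would be establishing that variational inequality for $p$, so I favor the firm-nonexpansivity route above.
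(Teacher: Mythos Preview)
Your proof is correct and follows essentially the same route as the paper: both feed $y = Px$ into the firm-nonexpansivity inequality \eqref{eqn:firmlynonspreading} (the paper rederives it from monotonicity rather than citing it), invoke $J_{\lambda}Px \to Px$ from the proof of Proposition~\ref{prop:convexity}, and conclude strong convergence directly without any detour through $\Delta$-convergence. The only difference is that the paper uses the Pythagorean projection inequality $\rho^{2}(x,J_{\lambda}x) \geq \rho^{2}(x,Px) + \rho^{2}(Px,J_{\lambda}x)$ from Theorem~\ref{thm:projection}\ref{cdn:ProjIneq}, which cancels the $\rho^{2}(Px,J_{\lambda}x)$ term on the spot and yields $\rho(J_{\lambda}x,J_{\lambda}Px)\to 0$ without the $o(1)$ bookkeeping you carry; your version trades that sharper projection estimate for the weaker nearest-point bound $\rho(J_{\lambda}x,x)\geq\rho(x,Px)$ at the cost of the asymptotic replacement step, which you justify correctly.
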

\begin{proof}
Put $P := P_{\overline{\dom A}}$. Let $x \in H$ and $\lambda > 0$. By the monotonicity of $A$, we have
\begin{align}
0 &\geq 2\lambda\rho(J_{\lambda}x,J_{\lambda}Px) \left[ g_{J_{\lambda}x}(\lambda^{-1}\rho(J_{\lambda}x,x)\gamma_{J_{\lambda}x,x},\gamma_{J_{\lambda}x,J_{\lambda}Px}) \right. \nonumber\\
&\qquad\qquad \qquad \qquad\qquad  + \left. g_{J_{\lambda}Px}(\lambda^{-1}\rho(J_{\lambda}Px,Px)\gamma_{J_{\lambda}Px,Px},\gamma_{J_{\lambda}Px,J_{\lambda}x}) \right] \nonumber\\
&= 2 \left[ g_{J_{\lambda}x}(\rho(J_{\lambda}x,x)\gamma_{J_{\lambda}x,x},\rho(J_{\lambda}x,J_{\lambda}Px)\gamma_{J_{\lambda}x,J_{\lambda}Px}) \right. \nonumber\\
&\qquad\qquad\qquad + \left. g_{J_{\lambda}Px}(\rho(J_{\lambda}Px,Px)\gamma_{J_{\lambda}Px,Px},\rho(J_{\lambda}x,J_{\lambda}Px) \gamma_{J_{\lambda}Px,J_{\lambda}x}) \right] \nonumber\\
&\geq 2 \left[\product{\gamma_{J_{\lambda}x,x}}{\gamma_{J_{\lambda}x,J_{\lambda}Px}} + \product{\gamma_{J_{\lambda}Px,Px}}{\gamma_{J_{\lambda}Px,J_{\lambda}x}}\right] \nonumber\\
& = \rho^{2}(x,J_{\lambda}x) + 2\rho^{2}(J_{\lambda}x,J_{\lambda}Px) - \rho^{2}(x,J_{\lambda}Px) + \rho^{2}(Px,J_{\lambda}Px) - \rho^{2}(Px,J_{\lambda}x). \label{eqn:est-zero-1}
\end{align}
We recall from \ref{cdn:ProjIneq} in Theorem \ref{thm:projection} that
\[
\rho^{2}(x,J_{\lambda}x) \geq \rho^{2}(x,Px) + \rho^{2}(Px,J_{\lambda}x).
\]
Substitute this inequality in \eqref{eqn:est-zero-1}, we get
\begin{align*}
0 &\geq \left(\rho^{2}(x,Px) + \rho^{2}(Px,J_{\lambda}x)\right) + 2\rho^{2}(J_{\lambda}x,J_{\lambda}Px) - \rho^{2}(x,J_{\lambda}Px) \\
& \qquad + \rho^{2}(Px,J_{\lambda}Px) - \rho^{2}(Px,J_{\lambda}x) \\
& = \rho^{2}(x,Px) + 2\rho^{2}(J_{\lambda}x,J_{\lambda}Px) - \rho^{2}(x,J_{\lambda}Px) + \rho^{2}(Px,J_{\lambda}Px).
\end{align*}
From the proof of the previous proposition and the fact that $Px \in \overline{\dom A}$, letting $\lambda \tendsto 0^{+}$ in the above inequality yields
\begin{align*}
0 & \geq \rho^{2}(x,Px) + 2\limsup_{\lambda \tendsto 0^{+}} \rho^{2}(J_{\lambda}x,J_{\lambda}Px) - \lim_{\lambda \tendsto 0^{+}} \rho^{2}(x,J_{\lambda}Px) + \lim_{\lambda \tendsto 0^{+}} \rho^{2}(Px,J_{\lambda}Px) \\
& = \rho^{2}(x,Px) + 2\limsup_{\lambda \tendsto 0^{+}} \rho^{2}(J_{\lambda}x,J_{\lambda}Px) - \rho^{2}(x,Px) + \rho^{2}(Px,Px) \\
& = 2\limsup_{\lambda \tendsto 0^{+}} \rho^{2}(J_{\lambda}x,J_{\lambda}Px) \geq 2\liminf_{\lambda \tendsto 0^{+}} \rho^{2}(J_{\lambda}x,J_{\lambda}Px) \geq 0,
\end{align*}
and so $\lim_{\lambda \tendsto 0^{+}} \rho^{2}(J_{\lambda}x,J_{\lambda}Px) = 0$. This equality and the triangle inequality further imply
\begin{align*}
0 &= \lim_{\lambda \tendsto 0^{+}} \rho(J_{\lambda}x,J_{\lambda}Px) \geq \limsup_{\lambda \tendsto 0^{+}} \rho(J_{\lambda}x,Px) - \lim_{\lambda \tendsto 0^{+}} \rho(Px,J_{\lambda}Px) \\
&\geq \limsup_{\lambda \tendsto 0^{+}} \rho(J_{\lambda}x,Px) \geq \liminf_{\lambda \tendsto 0^{+}} \rho(J_{\lambda}x,Px) \geq 0.
\end{align*}
We finally conclude that $\lim_{\lambda \tendsto 0^{+}}J_{\lambda}x = Px$.
\end{proof}

\begin{prop}
If $H$ satisfies the geodesic extension property, then the mapping $\lambda \mapsto J_{\lambda} x$ is continuous on $(0,+\infty)$ for every $x \in H$. If $x \in \overline{\dom A}$, then it is continuous on $[0,+\infty)$.
\end{prop}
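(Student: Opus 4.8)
The plan is to split the assertion into two parts: continuity on the open half-line $(0,+\infty)$, which holds for every $x \in H$, and continuity at the endpoint $\lambda = 0$, which is claimed only for $x \in \overline{\dom A}$. The endpoint is the cheap part. Since $J_{0}$ is the identity we have $J_{0}x = x$, while Theorem \ref{thm:Asymp0} gives $\lim_{\lambda \to 0^{+}} J_{\lambda}x = P_{\overline{\dom A}}x$. When $x \in \overline{\dom A}$, which is closed and convex by Proposition \ref{prop:convexity}, the projection fixes $x$, i.e. $P_{\overline{\dom A}}x = x$, whence $\lim_{\lambda \to 0^{+}} J_{\lambda}x = x = J_{0}x$. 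Thus right-continuity at $0$ reduces to the open-interval claim, which I treat next as the substantive case.

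For the main assertion I would fix $x \in H$ and $\lambda_{0} > 0$ and establish the two one-sided limits separately, using only the resolvent identity (Proposition \ref{prop:properties-J}\ref{cdn:J-ResolventIden}) and the nonexpansiveness of each $J_{\mu}$ (Proposition \ref{prop:properties-J}\ref{cdn:J-NX}). For the left limit, take $0 < \mu \le \lambda_{0}$ and write $J_{\lambda_{0}}x = J_{\mu}u$ with $u = (1-\tfrac{\mu}{\lambda_{0}})J_{\lambda_{0}}x \oplus \tfrac{\mu}{\lambda_{0}}x$. Since $u$ lies on the geodesic from $J_{\lambda_{0}}x$ to $x$ at parameter $\mu/\lambda_{0}$, one has $\rho(u,x) = (1 - \tfrac{\mu}{\lambda_{0}})\rho(J_{\lambda_{0}}x,x)$, so nonexpansiveness of $J_{\mu}$ yields
\[
\rho(J_{\lambda_{0}}x, J_{\mu}x) = \rho(J_{\mu}u, J_{\mu}x) \le \rho(u,x) = \Big(1 - \tfrac{\mu}{\lambda_{0}}\Big)\rho(J_{\lambda_{0}}x,x) \longrightarrow 0
\]
as $\mu \to \lambda_{0}^{-}$, because $\rho(J_{\lambda_{0}}x,x)$ is a fixed finite number.

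The right limit is where the genuine difficulty sits. For $\mu \ge \lambda_{0}$ I would orient the identity the other way, writing $J_{\mu}x = J_{\lambda_{0}}w$ with $w = (1-\tfrac{\lambda_{0}}{\mu})J_{\mu}x \oplus \tfrac{\lambda_{0}}{\mu}x$, which gives $\rho(J_{\mu}x, J_{\lambda_{0}}x) \le \rho(w,x) = (1 - \tfrac{\lambda_{0}}{\mu})\rho(J_{\mu}x,x)$. The obstacle is that the right-hand side now carries $\rho(J_{\mu}x,x)$, which is not a priori controlled as $\mu$ ranges over a right-neighbourhood of $\lambda_{0}$, and I would rather avoid proving monotonicity of $\mu \mapsto \rho(x,J_{\mu}x)$. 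The resolution is a self-absorbing estimate: bound $\rho(J_{\mu}x,x) \le \rho(J_{\mu}x, J_{\lambda_{0}}x) + \rho(J_{\lambda_{0}}x, x)$, substitute, and solve for $\rho(J_{\mu}x, J_{\lambda_{0}}x)$. Setting $\theta = 1 - \lambda_{0}/\mu \in [0,1)$, this produces $\rho(J_{\mu}x,J_{\lambda_{0}}x) \le \tfrac{\theta}{1-\theta}\rho(J_{\lambda_{0}}x,x) = \big(\tfrac{\mu}{\lambda_{0}} - 1\big)\rho(J_{\lambda_{0}}x,x)$, which tends to $0$ as $\mu \to \lambda_{0}^{+}$. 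Combining the two one-sided bounds gives continuity at $\lambda_{0}$, and since $\lambda_{0} > 0$ was arbitrary, continuity on $(0,+\infty)$ follows; together with the endpoint computation this settles both claims.
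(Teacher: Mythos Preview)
Your argument is correct. Both you and the paper reduce to the basic estimate $\rho(J_{\mu}x,J_{\lambda}x)\le(1-\mu/\lambda)\,\rho(x,J_{\lambda}x)$ for $0<\mu\le\lambda$, obtained from the resolvent identity and nonexpansiveness, and both dispatch the endpoint $\lambda=0$ via Theorem~\ref{thm:Asymp0}. The genuine difference lies in how the factor $\rho(x,J_{\lambda}x)$ is controlled when $\lambda$ is the moving parameter. The paper first treats $z\in\dom A$, where Proposition~\ref{prop:HeartWarmingIneq} gives $\rho(z,J_{\lambda}z)\le\lambda\abs{Az}$ and hence the Lipschitz bound $\rho(J_{\mu}z,J_{\lambda}z)\le(\lambda-\mu)\abs{Az}$; it then bootstraps to arbitrary $x\in H$ by bounding $\rho(x,J_{t}x)$ uniformly on compact subintervals of $(0,\infty)$ through comparison with such a $z$. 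Your self-absorbing trick for the right limit, solving $\rho(J_{\mu}x,J_{\lambda_{0}}x)\le\theta\bigl(\rho(J_{\mu}x,J_{\lambda_{0}}x)+\rho(J_{\lambda_{0}}x,x)\bigr)$ for the unknown, bypasses this detour entirely: it needs neither the Yosida estimate nor an auxiliary point of $\dom A$, which makes your argument more elementary and self-contained. In exchange, the paper's route yields the explicit Lipschitz-in-$\lambda$ estimate on $\dom A$ (its displayed inequality \eqref{eqn:est-cont-J}), a sharper quantitative statement that your approach does not produce directly.
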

\begin{proof}
Let $z \in \dom A$ and $\lambda \geq \mu > 0$. By \ref{cdn:J-NX} and \ref{cdn:J-ResolventIden} in Proposition \ref{prop:properties-J} and Proposition \ref{prop:HeartWarmingIneq}, we get
\begin{align}\label{eqn:est-cont-J}
\rho(J_{\mu}z,J_{\lambda}z) &\leq \rho\left(J_{\mu}z,J_{\mu}\left(\left(1-\frac{\mu}{\lambda}\right)J_{\lambda}z \oplus \frac{\mu}{\lambda}z\right)\right) \nonumber\\
&\leq \rho\left(z,\left(1-\frac{\mu}{\lambda}\right)J_{\lambda}z \oplus \frac{\mu}{\lambda}z\right) \nonumber\\
& = \left(1 - \frac{\mu}{\lambda}\right)\rho(z,J_{\lambda}z) \leq (\lambda - \mu)\abs{Az}.
\end{align}
Therefore, $\lambda \mapsto J_{\lambda}z$ is continuous on $(0,\infty)$ for $z \in \dom A$.
Now, let $x \in H$ and $I := [a,b]$ be a bounded closed subinterval of $(0,+\infty)$. We have
\[
\rho(x,J_{t}x) \leq \rho(x,J_{t}z) + \rho(J_{t}z,J_{t}x) \leq \rho(x,J_{t}z) + \rho(z,x)
\]
for all $t \in I$. By the continuity of $J_{t}z$, we know that $\rho(x,J_{t}z)$ is bounded on $I$. The above inequality implies that $\rho(x,J_{t}x)$ is also bounded on $I$. Thus there exists $M > 0$ such that we may similarly obtain
\[
\rho(J_{\mu}x,J_{\lambda}x) \leq \left(1 - \frac{\mu}{\lambda}\right)\rho(x,J_{\lambda}x) \leq \left(1 - \frac{\mu}{\lambda}\right)M
\]
whenever $\mu,\lambda \in (a,b)$ and $\mu \leq \lambda$. This proves the desired continuity. Moreover, Theorem \ref{thm:Asymp0} shows the right continuity at $\lambda = 0$ for $x \in \overline{\dom A}$.
\end{proof}

\begin{thm}\label{thm:asymp_infty}
Suppose that $H$ satisfies the geodesic extension property, $A : H \multimap TH$ is a monotone vector field with the surjectivity condition, and $A^{-1}(\0) \neq \emptyset$. Then,
\[
\lim_{\lambda \tendsto \infty} J_{\lambda}x = P_{A^{-1}(\0)} x,
\]
for every $x \in H$.
\end{thm}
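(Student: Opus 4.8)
The plan is to set $F := A^{-1}(\0)$ and prove that the net $(J_{\lambda}x)_{\lambda}$ converges strongly to $P_{F}x$ as $\lambda \tendsto \infty$. Since $F = \Fix(J_{\mu})$ for every $\mu > 0$ by Proposition \ref{prop:properties-J}\ref{cdn:Fixed=Stationary} and $J_{\mu}$ is nonexpansive, $F$ is nonempty, closed, and convex, so $P_{F}$ is well-defined by Theorem \ref{thm:projection}. Fixing any $z \in F$ (so that $J_{\lambda}z = z$ for all $\lambda$) and substituting $y = z$ into inequality \eqref{eqn:firmlynonspreading} from the proof of Proposition \ref{prop:properties-J}\ref{cdn:J-NX}, I would first record the fundamental estimate
\[
\rho^{2}(J_{\lambda}x,z) + \rho^{2}(x,J_{\lambda}x) \leq \rho^{2}(x,z) \qquad (\forall \lambda > 0,\ z \in F).
\]
In particular $\rho(J_{\lambda}x,z) \leq \rho(x,z)$ and $\rho(x,J_{\lambda}x) \leq \rho(x,z)$, so the net $(J_{\lambda}x)$ is bounded.

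Next I would show that every $\Delta$-subsequential limit of $(J_{\lambda}x)$ as $\lambda \tendsto \infty$ lies in $F$. Fix $\mu > 0$. For $\lambda \geq \mu$ the resolvent identity (Proposition \ref{prop:properties-J}\ref{cdn:J-ResolventIden}) gives $J_{\lambda}x = J_{\mu}u$ with $u = (1 - \mu/\lambda)J_{\lambda}x \oplus (\mu/\lambda)x$, so that $\rho(J_{\lambda}x,u) = (\mu/\lambda)\rho(J_{\lambda}x,x)$. Using nonexpansivity of $J_{\mu}$ together with the bound $\rho(x,J_{\lambda}x) \leq \rho(x,z)$ from the fundamental estimate, I obtain
\[
\rho(J_{\mu}(J_{\lambda}x),J_{\lambda}x) = \rho(J_{\mu}(J_{\lambda}x),J_{\mu}u) \leq \rho(J_{\lambda}x,u) = \frac{\mu}{\lambda}\rho(J_{\lambda}x,x) \leq \frac{\mu}{\lambda}\rho(x,z),
\]
which tends to $0$ as $\lambda \tendsto \infty$. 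Hence if $(J_{\lambda_{k}}x)$ is $\Delta$-convergent to some $p$ with $\lambda_{k} \tendsto \infty$, then $\rho(J_{\lambda_{k}}x,J_{\mu}(J_{\lambda_{k}}x)) \tendsto 0$, and the Demiclosedness Principle (Theorem \ref{thm:demiclosed}) applied to the nonexpansive map $J_{\mu}$ yields $J_{\mu}p = p$, i.e.\ $p \in \Fix(J_{\mu}) = F$. This step, in which the resolvent identity manufactures an almost-fixed-point sequence for $J_{\mu}$, is the crux of the argument, and is where I expect the main obstacle to lie.

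Finally I would upgrade to strong convergence and identify the limit. Given $\lambda_{k} \tendsto \infty$, boundedness guarantees a $\Delta$-convergent subsequence whose limit $p$ lies in $F$ by the previous step. Applying the fundamental estimate with $z = p$ gives $\rho^{2}(J_{\lambda_{k}}x,p) \leq \rho^{2}(x,p) - \rho^{2}(x,J_{\lambda_{k}}x)$ along that subsequence; taking $\limsup$ and invoking the $\Delta$-lower semicontinuity of $\rho(x,\cdot)$ (so that $\rho^{2}(x,p) \leq \liminf_{k}\rho^{2}(x,J_{\lambda_{k}}x)$) forces $\limsup_{k}\rho^{2}(J_{\lambda_{k}}x,p) \leq 0$, whence $J_{\lambda_{k}}x \tendsto p$ strongly. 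Passing to the limit in the fundamental estimate then gives $\rho^{2}(p,z) + \rho^{2}(x,p) \leq \rho^{2}(x,z)$ for all $z \in F$; in particular $\rho(x,p) \leq \rho(x,z)$ for every $z \in F$, so $p = P_{F}x$ by uniqueness of the projection. Since this limit is independent of the chosen subsequence, a standard subsequence argument (every subsequence of $(J_{\lambda}x)$ admits a further subsequence converging strongly to $P_{F}x$) shows that the entire net satisfies $\lim_{\lambda \tendsto \infty}J_{\lambda}x = P_{F}x$, as claimed.
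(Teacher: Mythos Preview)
Your proof is correct and follows the same overall architecture as the paper's: bound the net via inequality \eqref{eqn:firmlynonspreading}, establish asymptotic regularity $\rho(J_{\lambda}x,J_{\mu}(J_{\lambda}x)) \to 0$, apply demiclosedness to place every $\Delta$-cluster point in $F$, then identify the limit and upgrade to strong convergence. Two minor but noteworthy variations: (i) for asymptotic regularity the paper routes through the Yosida estimate of Proposition~\ref{prop:HeartWarmingIneq} (observing $\abs{A(J_{\lambda}x)} \leq \lambda^{-1}\rho(J_{\lambda}x,x)$ and then $\norm{A_{\mu}(J_{\lambda}x)} \leq \abs{A(J_{\lambda}x)}$), whereas you obtain the same bound $\rho(J_{\mu}(J_{\lambda}x),J_{\lambda}x) \leq (\mu/\lambda)\rho(J_{\lambda}x,x)$ directly from the resolvent identity and nonexpansivity of $J_{\mu}$, bypassing that proposition; (ii) for strong convergence the paper first identifies the $\Delta$-limit as $P_{F}x$ and then invokes the Kadec--Klee property (Lemma~\ref{lem:Kadec-Klee}), while you squeeze $\limsup_{k}\rho^{2}(J_{\lambda_{k}}x,p) \leq 0$ straight out of the fundamental estimate combined with $\Delta$-lower semicontinuity of $\rho(x,\cdot)$, avoiding Kadec--Klee altogether. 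Both shortcuts make your argument slightly more self-contained, with no loss of rigor.
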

\begin{proof}
Let $x \in H$ and $(\lambda_{k}) \subset (0,\infty)$ be a sequence with $\lambda_{k} \tendsto \infty$. For convenience, we put $u := P_{A^{-1}(\0)} x$ and $x^{k} := J_{\lambda_{k}} x$ for each $k \in \N$. In view of the inequality \eqref{eqn:firmlynonspreading}, we have for each $k \in \N$:
\begin{equation}\label{eqn:bound-1}
\rho^{2}(x^{k},x) \leq \rho^{2}(x^{k},x) + \rho^{2}(x^{k},u) \leq \rho^{2}(x,u) =: K.
\end{equation}
Therefore, $(x^{k})$ is bounded and so it admits a subsequence $(x^{k_{i}})$ which is $\Delta$-convergent to some point $z \in H$. Recall that $\lambda_{k_{i}}^{-1}\rho(x^{k_{i}},x)\gamma_{x^{k_{i}},x} \in Ax^{k_{i}}$, and so
\[
\abs{Ax^{k_{i}}} \leq \norm{\lambda_{k_{i}}^{-1}\rho(x^{k_{i}},x)\gamma_{x^{k_{i}},x}} = \lambda_{k_{i}}^{-1}\rho(x^{k_{i}},x) \leq \lambda_{k_{i}}^{-1}K.
\]
Fix any $\mu > 0$. From Proposition \ref{prop:HeartWarmingIneq}, we have
\[
\mu^{-1}\rho(x^{k_{i}},J_{\mu}x^{k_{i}}) = \norm{A_{\mu}x^{k_{i}}} \leq \abs{Ax^{k_{i}}} \leq \lambda_{k_{i}}^{-1}K.
\]
It follows that
\[
\rho(x^{k_{i}},J_{\mu}x^{k_{i}}) \leq \mu\lambda_{k_{i}}^{-1}K,
\]
which yields $\lim_{k}\rho(x^{k_{i}},J_{\mu}x^{k_{i}}) = 0$. By the Demiclosedness Principle (Theorem \ref{thm:demiclosed}), we have $z \in \Fix(J_{\mu}) = A^{-1}\0$. Since $\rho^{2}(\cdot,x)$ is $\Delta$-lower semicontinuous and $(x^{k_{i}})$ is $\Delta$-convergent to $z$, we obtain from \eqref{eqn:bound-1} the following:
\[
\rho^{2}(u,x) \leq \rho^{2}(z,x) \leq \liminf_{i \tendsto \infty} \rho^{2}(x^{k_{i}},x) \leq \limsup_{i \tendsto \infty} \rho^{2}(x^{k_{i}},x) \leq \rho^{2}(u,x).
\]
Therefore, $z = u$ and so $\omega_{\Delta}(x^{k}) = \{u\}$. By Proposition \ref{prop:singletonOmegaDelta} we conclude that $(x^{k})$ is $\Delta$-convergent to $u$. With similar procedure above, we obtain $\lim_{k} \rho(x^{k},x) = \rho(u,x)$. By the Kadec-Klee property (Lemma \ref{lem:Kadec-Klee}), we have the strong convergence $\lim_{k} J_{\lambda_{k}}x = Px$. Since $(\lambda_{k})$ is an arbitrary sequence with $\lambda_{k} \tendsto \infty$, the convergence is attained.
\end{proof}

\section{The generation of a nonexpansive semigroup}

In this section, we state and prove the generation theorem applied to a monotone vector field on a complete $\CAT(0)$ space. Our results in this section extend several results in the literature. For instance, the generation theorems for proper, convex, lower semicontinuous functions in complete $\CAT(0)$ spaces of Jost \cite[Theorem 1.3.13]{MR1652278}, Mayer \cite[Theorem 1.13]{MR1651416}, Stojkovic \cite[Theorem 2.18]{MR2879568}, and Ba\v{c}\'{a}k \cite[Theorem 1.5]{MR3047087} are successfully amplified to use with monotone vector fields. Moreover, the results of Iwamiya and Okochi \cite[Theorem 4.1]{MR1979730} are lifted to a greater generality where the previous requirement for the smoothness of class $C^{3}$ is completely removed.

Recall that for a nonempty subset $C \subset H$, a family $\{S(t)\}_{t \geq 0}$ is said to be a \emph{nonexpansive semigroup} on $C$ if the following conditions are satisfied:
\begin{enumerate}[label=(\roman*)]
\item $S(t) : C \to C$ is nonexpansive for each $t \geq 0$.
\item $S(s+t) = S(s) \circ S(t)$ for all $s,t \geq 0$.
\item $S(0) = I$, where $I$ denotes the identity mapping on $C$.
\item $t \mapsto S(t)x$ is continuous on $[0,+\infty)$ for each $x \in C$.
\end{enumerate}

The following technical lemma can be extracted from the proof of \cite[Theorem 1 of Chapter XIV.7]{MR1336382}. It also appears in a Japanese textbook \cite{MR840217} in the present form. Here, we adopt the notation $\N_{0} := \N \cup \{0\}$. \begin{lem}[\cite{MR840217,MR1336382}]\label{lem:doubleSeq}
Let $\lambda > 0$. For each $k \in \N_{0}$, let $\mu_{k} \in (0,\lambda]$, $\alpha_{k} := \frac{\mu_{k}}{\lambda}$, $\beta_{k} := 1 - \alpha_{k}$, and
\[
t_{k} := \left\{
\begin{array}{ll}
0	&\text{if $k = 0$;} \medskip\\
\sum_{i=1}^{k} \mu_{i}	&\text{if $k \in \N$.}
\end{array}
\right.
\]
Let $(A_{j,k})$ be a double sequence such that
\[
\left\{
\begin{array}{l}
A_{0,k} \leq k\lambda, \quad  A_{j,0} \leq t_{j} \quad (\forall j,k \in \N_{0}); \smallskip\\
A_{j,k} \leq \alpha_{j}A_{j-1,k-1} + \beta_{j}A_{j-1,k} \quad (\forall j,k \in \N).
\end{array}
\right.
\]
Then the following inequality holds:
\[
A_{j,k} \leq \sqrt{(k\lambda - t_{j})^{2} + k\lambda^{2}} + \sqrt{(k\lambda - t_{j})^{2} + \lambda t_{j}} \quad (\forall j,k \in \N_{0}).
\]
\end{lem}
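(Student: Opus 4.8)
The plan is to prove the estimate by induction on the row index $j$, treating all column indices $k \in \N_{0}$ simultaneously, and to reduce the inductive step to a single convexity estimate. Abbreviate the right-hand side as
\[
\Phi(j,k) := \sqrt{(k\lambda - t_{j})^{2} + k\lambda^{2}} + \sqrt{(k\lambda - t_{j})^{2} + \lambda t_{j}},
\]
so that the goal is $A_{j,k} \le \Phi(j,k)$. The base case $j = 0$ is immediate: since $t_{0} = 0$ one has $\Phi(0,k) = \lambda\sqrt{k^{2}+k} + k\lambda \ge k\lambda \ge A_{0,k}$. Likewise, for every $j$ the boundary column $k = 0$ is handled directly, because $\Phi(j,0) = t_{j} + \sqrt{t_{j}^{2} + \lambda t_{j}} \ge t_{j} \ge A_{j,0}$; this disposes of the $k = 0$ entries at each stage of the induction.

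For the inductive step, fix $j \ge 1$ and $k \ge 1$ and assume $A_{j-1,m} \le \Phi(j-1,m)$ for all $m$. The recurrence together with the inductive hypothesis gives $A_{j,k} \le \alpha_{j}\Phi(j-1,k-1) + \beta_{j}\Phi(j-1,k)$, so everything reduces to the \emph{key inequality}
\[
\alpha_{j}\,\Phi(j-1,k-1) + \beta_{j}\,\Phi(j-1,k) \le \Phi(j,k).
\]
To verify it I would set $a := k\lambda - t_{j}$ and write $\alpha := \alpha_{j}$, $\beta := \beta_{j}$, then exploit the relation $t_{j} = t_{j-1} + \alpha\lambda$ together with $\alpha + \beta = 1$. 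These yield the two substitutions $k\lambda - t_{j-1} = a + \alpha\lambda$ and $(k-1)\lambda - t_{j-1} = a - \beta\lambda$, after which $\Phi(j-1,k-1)$ and $\Phi(j-1,k)$ are expressed entirely in terms of $a$, $\alpha$, $\beta$, $\lambda$, and $t_{j-1}$.

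The decisive move is to split the key inequality into its two radicals and bound each convex combination by a single square root using the concavity of $\sqrt{\cdot}$ (Jensen's inequality). For the first radical this produces a bound by $\sqrt{\alpha(a-\beta\lambda)^{2} + \beta(a+\alpha\lambda)^{2} + \alpha(k-1)\lambda^{2} + \beta k\lambda^{2}}$, and the linchpin is the algebraic identity
\[
\alpha(a-\beta\lambda)^{2} + \beta(a+\alpha\lambda)^{2} = a^{2} + \alpha\beta\lambda^{2},
\]
which collapses the radicand to $a^{2} + k\lambda^{2} - \alpha^{2}\lambda^{2} \le a^{2} + k\lambda^{2}$, exactly the first term of $\Phi(j,k)$. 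The same identity applied to the second radical yields radicand $a^{2} + \alpha\beta\lambda^{2} + \lambda t_{j-1}$, and since $\beta \le 1$ forces $\alpha\beta\lambda^{2} \le \alpha\lambda^{2}$, this is at most $a^{2} + \lambda(t_{j-1} + \alpha\lambda) = a^{2} + \lambda t_{j}$, the second term of $\Phi(j,k)$. Adding the two bounds establishes the key inequality and closes the induction. I expect the only genuine obstacle to be spotting the correct split and the cancellation in that displayed identity; once those are in hand, the concavity step and the remaining estimates are entirely routine.
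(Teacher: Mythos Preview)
Your argument is correct. The induction scheme, the boundary checks, the Jensen step via concavity of $\sqrt{\cdot}$, and the algebraic identity $\alpha(a-\beta\lambda)^{2} + \beta(a+\alpha\lambda)^{2} = a^{2} + \alpha\beta\lambda^{2}$ all go through exactly as you describe, and the two radicand estimates $a^{2}+k\lambda^{2}-\alpha^{2}\lambda^{2} \le a^{2}+k\lambda^{2}$ and $a^{2}+\alpha\beta\lambda^{2}+\lambda t_{j-1} \le a^{2}+\lambda t_{j}$ are precisely what is needed.

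As for comparison with the paper: the paper does not prove this lemma at all. It is stated with a citation to \cite{MR840217,MR1336382} and introduced as a technical fact that ``can be extracted from the proof of \cite[Theorem~1 of Chapter~XIV.7]{MR1336382}'' and ``appears in a Japanese textbook \cite{MR840217} in the present form.'' So there is no in-paper argument to compare against; your proof supplies what the paper omits. The approach you give---induction on $j$ plus the concavity split of the two radicals---is in fact the standard route found in the cited sources, so you have reconstructed the intended argument.
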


Now, we are ready to prove the main result of this section. The theorem extends a similar result of Crandal and Liggett \cite{MR0287357} to the setting of a complete $\CAT(0)$ space.

\begin{thm}\label{thm:S(t)x}
Suppose that $H$ satisfies the geodesic extension property, $A : H \multimap TH$ is a monotone vector field with the surjectivity condition. Assume that $x \in \overline{\dom A}$. Then, the following limit exists and is uniform on each bounded subinterval of $[0,+\infty)$:
\begin{equation}\label{eqn:S(t)x}
S(t)x := \lim_{k \tendsto \infty} J_{t/k}^{k} x \quad (\forall t \in [0,+\infty)).
\end{equation}
Moreover, the following estimate holds for any $x \in \dom A$:
\begin{equation}\label{eqn:semigroupestimate}
\rho(J_{t/k}^{k} x,S(t)x) \leq \abs{Ax} \cdot \frac{2t}{\sqrt{k}}
\end{equation}
for any $t \geq 0$ and $k \in \N$.
\end{thm}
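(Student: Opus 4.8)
The plan is to follow the Crandall--Liggett strategy, with the resolvent identity in Proposition~\ref{prop:properties-J}\ref{cdn:J-ResolventIden} and the Yosida estimate of Proposition~\ref{prop:HeartWarmingIneq} playing the role of the accretivity estimates, and Lemma~\ref{lem:doubleSeq} carrying out the combinatorial bookkeeping. I would first dispose of the trivial case $\abs{Ax}=0$, where $x\in A^{-1}(\0)=\Fix(J_\lambda)$ forces $J_{t/k}^{k}x=x$ for every $k$, so that both claims hold with $S(t)x=x$. Thus assume $x\in\dom A$ with $\abs{Ax}\in(0,\infty)$. Two preliminary estimates are needed. Nonexpansivity of $J_\lambda$ (Proposition~\ref{prop:properties-J}\ref{cdn:J-NX}) gives that the ``speed'' $\rho(J_\lambda^{i-1}x,J_\lambda^{i}x)$ is nonincreasing in $i$, while $\rho(x,J_\lambda x)=\lambda\norm{A_\lambda x}\le\lambda\abs{Ax}$ by Proposition~\ref{prop:HeartWarmingIneq}; summing along the iterates via the triangle inequality then yields $\rho(x,J_\lambda^{k}x)\le k\lambda\abs{Ax}$ for every $k\in\N$ and $\lambda>0$.

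Next I would derive the recursion that feeds Lemma~\ref{lem:doubleSeq}. Fix $0<\mu\le\lambda$ and apply the resolvent identity to the longer-step iterate, writing $J_\lambda^{k}x=J_\lambda(J_\lambda^{k-1}x)=J_\mu(u_k)$ with $u_k:=\left(1-\tfrac{\mu}{\lambda}\right)J_\lambda^{k}x\oplus\tfrac{\mu}{\lambda}J_\lambda^{k-1}x$. Using nonexpansivity of $J_\mu$ to peel off one factor from $J_\mu^{j}x$, and then the convexity of the metric in $\CAT(0)$ spaces to split the convex combination $u_k$, one obtains
\[
\rho(J_\mu^{j}x,J_\lambda^{k}x)\le\tfrac{\mu}{\lambda}\,\rho(J_\mu^{j-1}x,J_\lambda^{k-1}x)+\left(1-\tfrac{\mu}{\lambda}\right)\rho(J_\mu^{j-1}x,J_\lambda^{k}x).
\]
Setting $A_{j,k}:=\rho(J_\mu^{j}x,J_\lambda^{k}x)/\abs{Ax}$ with uniform step $\mu$ (so $t_j=j\mu$), the preliminary estimates supply the boundary data $A_{0,k}\le k\lambda$ and $A_{j,0}\le t_j$, and the displayed inequality is exactly the recursion of Lemma~\ref{lem:doubleSeq} with $\alpha_j=\mu/\lambda$ and $\beta_j=1-\mu/\lambda$.

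The lemma then yields
\[
\rho(J_\mu^{j}x,J_\lambda^{k}x)\le\abs{Ax}\left[\sqrt{(k\lambda-j\mu)^{2}+k\lambda^{2}}+\sqrt{(k\lambda-j\mu)^{2}+\lambda j\mu}\right].
\]
The decisive specialization is to compare $J_{t/n}^{n}x$ and $J_{t/m}^{m}x$ for $n\le m$ by taking $\lambda=t/n$, $k=n$ and $\mu=t/m$, $j=m$; then $k\lambda=j\mu=t$, the cross term $k\lambda-j\mu$ vanishes, and the bound collapses to $\rho(J_{t/m}^{m}x,J_{t/n}^{n}x)\le\abs{Ax}\,\tfrac{2t}{\sqrt{n}}$. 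This shows $(J_{t/k}^{k}x)_k$ is Cauchy, so the limit $S(t)x$ exists; letting $m\to\infty$ with $k=n$ fixed gives precisely the estimate \eqref{eqn:semigroupestimate}, and the bound $2t\abs{Ax}/\sqrt{k}\le 2T\abs{Ax}/\sqrt{k}$ on $[0,T]$ yields uniformity in $t$ on bounded subintervals.

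Finally, to pass from $\dom A$ to an arbitrary $x\in\overline{\dom A}$, I would approximate $x$ by $x^{l}\in\dom A$ and use that $J_{t/k}^{k}$ is nonexpansive, being a composition of nonexpansive maps, so that
\[
\rho(J_{t/k}^{k}x,J_{t/m}^{m}x)\le 2\rho(x,x^{l})+\rho(J_{t/k}^{k}x^{l},J_{t/m}^{m}x^{l});
\]
choosing $l$ first and then $k,m$ large makes the right-hand side small uniformly for $t$ in a bounded interval, giving existence and uniform convergence in general. I expect the main obstacle to be setting up the recursion cleanly---correctly invoking the resolvent identity on the longer-step iterate and controlling the convex combination $u_k$ via $\CAT(0)$ convexity---together with checking that the boundary data match Lemma~\ref{lem:doubleSeq}; once these are in place, the collapse of the cross term under $k\lambda=j\mu=t$ delivers the sharp $2t/\sqrt{k}$ rate essentially for free.
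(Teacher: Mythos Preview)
Your proposal is correct and follows essentially the same Crandall--Liggett strategy as the paper's proof: the boundary estimates $\rho(x,J_\lambda^{k}x)\le k\lambda\abs{Ax}$ and $\rho(x,J_\mu^{j}x)\le j\mu\abs{Ax}$, the recursion obtained from the resolvent identity \ref{cdn:J-ResolventIden} plus nonexpansivity and geodesic convexity, the invocation of Lemma~\ref{lem:doubleSeq}, and the specialization $k\lambda=j\mu=t$ that collapses the bound to $2t/\sqrt{\min(n,m)}\,\abs{Ax}$ all match the paper line by line. The only cosmetic differences are that the paper sets up Lemma~\ref{lem:doubleSeq} with variable steps $(\mu_j)$ before specializing to a constant step, and treats existence and uniform convergence on $\overline{\dom A}$ in two separate passes, whereas you handle both at once via the uniform Cauchy estimate; neither changes the substance.
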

\begin{proof}
Let $t \geq 0$ be given. Then we first show the convergence on \eqref{eqn:S(t)x} by showing that $(J_{t/k}^{k})$ is Cauchy. Let $x \in \dom A$ and $\lambda > 0$ be given. Let $(\mu_{k})$ and $(t_{k})$ be sequences given in Lemma \ref{lem:doubleSeq}. Put
\[
\tilde A_{j,k} := \left\{
\begin{array}{ll}
\rho(J_{\mu_{j}}J_{\mu_{j-1}}\cdots J_{\mu_{1}}x,J_{\lambda}^{k}x)	& (\forall j \in \N, \forall k \in \N_{0}); \medskip\\
\rho(x,J_{\lambda}^{k}x)	& (j=0, \forall k \in \N_{0}).
\end{array}
\right.
\]
For $k \in \N_{0}$, it follows from Propositions \ref{prop:properties-J} and \ref{prop:HeartWarmingIneq} that
\begin{align*}
\tilde A_{0,k} &= \rho(x,J_{\lambda}^{k}x) \\
&\leq \rho(x,J_{\lambda}x) + \rho(J_{\lambda}x,J_{\lambda}^{2}x) + \cdots + \rho(J_{\lambda}^{k-1}x,J_{\lambda}^{k}x) \\
&\leq k \rho(x,J_{\lambda}x) = k\lambda \norm{A_{\lambda}x} \leq k\lambda\abs{Ax}.
\end{align*}
For $j \in \N$, we have
\begin{align*}
\tilde A_{j,0} &= \rho(J_{\mu_{j}}J_{\mu_{j-1}}\cdots J_{\mu_{1}}x,x) \\
&\leq \rho(J_{\mu_{j}}J_{\mu_{j-1}}\cdots J_{\mu_{1}}x,J_{\mu_{j}}J_{\mu_{j-1}}\cdots J_{\mu_{2}}x) + \rho(J_{\mu_{j}}J_{\mu_{j-1}}\cdots J_{\mu_{2}}x,J_{\mu_{j}}J_{\mu_{j-1}}\cdots J_{\mu_{3}}x) \\
&\quad + \cdots + \rho(J_{\mu_{j}}J_{\mu_{j-1}}x,J_{\mu_{j}}x) + \rho(J_{\mu_{j}}x,x) \\
&\leq \rho(J_{\mu_{1}}x,x) + \rho(J_{\mu_{2}}x,x) + \cdots + \rho(J_{\mu_{j}}x,x) \\
&= \mu_{1}\norm{A_{\mu_{1}}x} + \mu_{2}\norm{A_{\mu_{2}}x} + \cdots + \mu_{j}\norm{A_{\mu_{j}}x} \\
&\leq (\mu_{1} + \cdots + \mu_{j})\abs{Ax} = t_{j}\abs{Ax}.
\end{align*}
Now, let $j,k \in \N$. By using \ref{cdn:J-NX} and \ref{cdn:J-ResolventIden} of Proposition \ref{prop:properties-J}, we get
\begin{align*}
\tilde A_{j,k} &= \rho(J_{\mu_{j}}J_{\mu_{j-1}}\cdots J_{\mu_{1}}x,J_{\lambda}^{k}x) \\
&= \rho\left(J_{\mu_{j}}J_{\mu_{j-1}}\cdots J_{\mu_{1}}x,J_{\mu_{j}}\left(\frac{\mu_{j}}{\lambda}J_{\lambda}^{k-1}x \oplus \left(1 - \frac{\mu_{k}}{\lambda}\right)J_{\lambda}^{k}x\right)\right) \\
&\leq \rho\left(J_{\mu_{j-1}}\cdots J_{\mu_{1}}x,\frac{\mu_{j}}{\lambda}J_{\lambda}^{k-1}x \oplus \left(1 - \frac{\mu_{k}}{\lambda}\right)J_{\lambda}^{k}x\right) \\
&\leq \frac{\mu_{j}}{\lambda} \rho(J_{\mu_{j-1}}\cdots J_{\mu_{1}}x,J_{\lambda}^{k-1}x) + \left(1 - \frac{\mu_{k}}{\lambda}\right)\rho(J_{\mu_{j-1}}\cdots J_{\mu_{1}}x,J_{\lambda}^{k}x) \\
&= \alpha_{j}\tilde A_{j-1,k-1} + \beta_{j}\tilde A_{j-1,k}.
\end{align*}
If $\abs{Ax} = 0$, then $\rho(x,J_{\lambda}^{k}x) = \tilde A_{0,k} = 0$ and so $(J_{t/k}^{k} x)$ is a constant sequence. If $\abs{Ax} > 0$, we put $A_{j,k} := \tilde A_{j,k}/\abs{Ax}$ for all $j,k \in \N_{0}$. Then, $(A_{j,k})$ satisfies the hypotheses of Lemma \ref{lem:doubleSeq} so that we have
\[
\tilde A_{j,k} \leq \abs{Ax} \left(\sqrt{(k\lambda - t_{j})^{2} + k\lambda^{2}} + \sqrt{(k\lambda - t_{j})^{2} + \lambda t_{j}}\right) \quad (\forall j,k \in \N_{0}).
\]
Fix $t > 0$ and $n,m \in \N$ with $m < n$. Take $\lambda := \frac{t}{m}$ and for each $k \in \N_{0}$, take $\mu_{k} := \frac{t}{n}$. So, we have $t_{0} = 0$ and $t_{n} = t$. Hence, we have
\begin{align*}
\rho(J_{t/n}^{n}x,J_{t/m}^{m}x) &= \rho(J_{\mu_{n}}J_{\mu_{n-1}}\cdots J_{\mu_{1}}x,J_{\lambda}^{m}x) = \tilde A_{n,m} \\
&\leq \abs{Ax}\left(\sqrt{(m\lambda - t_{n})^{2} + m\lambda^{2}} + \sqrt{(m\lambda - t_{n})^{2} + \lambda t_{n}}\right) \\
& = \abs{Ax}\cdot\frac{2t}{\sqrt{m}}.
\end{align*}
This shows that $(J_{t/k}^{k}x)$ is Cauchy and so the limit $S(t)x$ is defined, by the completeness of $H$. The above inequality also yields the estimate \eqref{eqn:semigroupestimate}. Take any $T > 0$, we further have
\[
\sup_{t \in [0,T]}\rho(J_{t/k}^{k} x,S(t)x) \leq \abs{Ax} \cdot \frac{2T}{\sqrt{k}} \quad (\forall k \in \N),
\]
which guarantee that the convergence is uniform on compact intervals of $[0,\infty)$.

Next, we show convergence for any $x \in \overline{\dom A}$. For instance, let $(x^{i})$ be a sequence in $\dom A$ such that $x^{i} \tendsto x$. For any $m,m \in \N$, we have
\begin{align*}
\rho(J_{t/n}^{n}x,J_{t/m}^{m}x) &\leq \rho(J_{t/n}^{n}x,J_{t/n}^{n}x^{i}) + \rho(J_{t/n}^{n}x^{i},J_{t/m}^{m}x^{i}) + \rho(J_{t/m}^{m}x^{i},J_{t/m}^{m}x) \\
&\leq 2\rho(x^{i},x) + \rho(J_{t/n}^{n}x^{i},J_{t/m}^{m}x^{i}).
\end{align*}
Since $x^{i} \tendsto x$ and the sequence $(J_{t/k}^{k}x^{i})_{n \in \N}$ is Cauchy for any $i \in \N$, we conclude that $(J_{t/k}^{k}x)$ is a Cauchy sequence. The convergence is again obtained by the completeness of $H$.

It remains to show that $S(t)x$ is attained uniformly on each bounded interval $[0,T]$ for $x \in \overline{\dom A}$. Again, let $(x^{i})$ be a sequence in $\dom A$ convergent to $x$. Since each mapping $J_{t}$ is nonexpansive for any $t \geq 0$, it follows that $S(t)$ is also nonexpansive for each $t \geq 0$. Consider for $0 \leq t \leq T$, we have
\begin{align*}
\rho(J_{t/k}^{k}x,S(t)x) &\leq \rho(J_{t/k}^{k}x,J_{t/k}^{k}x^{i}) + \rho(J_{t/k}^{k}x^{i},S(t)x^{i}) + \rho(S(t)x^{i},S(t)x) \\
&\leq 2\rho(x,x^{i}) + \rho(J_{t/n}^{n}x^{k},S(t)x^{k}) \\
&\leq 2\rho(x,x^{i}) + \sup_{0 \leq t \leq T} \rho(J_{t/k}^{k}x^{i},S(t)x^{i}) \\
&\leq 2\rho(x,x^{i}) + \abs{Ax^{i}}\cdot\frac{2T}{\sqrt{k}}.
\end{align*}
Letting $\varepsilon > 0$ be arbitrary, we can choose $i_{0} \in \N$ such that $2\rho(x,x^{i_{0}}) < \varepsilon$. This further implies
\[
\sup_{0 \leq t \leq T} \rho(J_{t/k}^{k}x,S(t)x) \leq \varepsilon + \abs{Ax^{i_{0}}}\cdot\frac{2T}{\sqrt{k}}
\]
and hence
\[
\lim_{k \tendsto \infty} \sup_{0 \leq t \leq T} \rho(J_{t/k}^{k}x,S(t)x) \leq \varepsilon.
\] 
Since $\varepsilon$ is chosen arbitrarily, then convergence is uniform on $[0,T]$.
\end{proof}

In the above theorem, we have in fact defined a family $\{S(t)\}_{t \geq 0}$ of mappings $S(t) : \overline{\dom A} \to \overline{\dom A}$. We next show that this family is a nonexpansive semigroup. Note that the semigroup constructed in this way is said to be \emph{generated by $A$}.
\begin{thm}
Under the assumptions of Theorem \ref{thm:S(t)x}, the family $\{S(t)\}_{t \geq 0}$ generated by the formula \eqref{eqn:S(t)x} is a nonexpansive semigroup.
\end{thm}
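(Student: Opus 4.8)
The plan is to verify directly the four defining properties of a nonexpansive semigroup for the family $\{S(t)\}_{t\geq 0}$ constructed in Theorem \ref{thm:S(t)x}. Three of them are light. Property (iii) is immediate, since $J_0 = I$ forces $S(0)x = \lim_k J_0^{k}x = x$. For nonexpansiveness (i), each iterate $J_{t/k}^{k}$ is nonexpansive as a composition of nonexpansive maps (Proposition \ref{prop:properties-J}\ref{cdn:J-NX}), so passing to the pointwise limit gives $\rho(S(t)x,S(t)y) = \lim_k \rho(J_{t/k}^k x, J_{t/k}^k y) \leq \rho(x,y)$; moreover $J_{t/k}^k x \in \dom A \subseteq \overline{\dom A}$ for $t>0$, whence $S(t)$ maps $\overline{\dom A}$ into itself. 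For the continuity property (iv), I would first note that $(\lambda,y)\mapsto J_\lambda y$ is jointly continuous (continuity in $\lambda$ was established in the proposition preceding Theorem \ref{thm:asymp_infty}, and nonexpansiveness yields uniform continuity in $y$), so that $t\mapsto J_{t/k}^k x$ is continuous on $[0,\infty)$; since $J_{t/k}^k x \to S(t)x$ uniformly on bounded subintervals, the limit $t\mapsto S(t)x$ is continuous for $x\in\dom A$, and the case $x\in\overline{\dom A}$ follows because $S(t)x^i\to S(t)x$ uniformly in $t$ (by (i)) when $x^i\to x$.

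The substance of the theorem is the semigroup identity (ii), $S(s+t) = S(s)\circ S(t)$, and this is where the main work lies. My plan is to isolate a \emph{generalized product formula}: for every $x\in\overline{\dom A}$ and $\tau\geq 0$, whenever $\lambda_j\tendsto 0^+$ and $n_j\in\N$ satisfy $n_j\lambda_j\tendsto\tau$, one has $J_{\lambda_j}^{n_j}x\tendsto S(\tau)x$. For $x\in\dom A$ this is obtained by rerunning the double-sequence estimate of Lemma \ref{lem:doubleSeq} exactly as in the proof of Theorem \ref{thm:S(t)x}, but now comparing the constant-step iterate $J_{\lambda_j}^{n_j}x$ against $J_{\tau/N}^{N}x$; letting $N\tendsto\infty$ collapses the bound to
\[
\rho\big(J_{\lambda_j}^{n_j}x, S(\tau)x\big) \leq 2\abs{Ax}\,\big|\tau - n_j\lambda_j\big|,
\]
which tends to $0$. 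The extension to $x\in\overline{\dom A}$ is routine: approximate $x$ by $x^i\in\dom A$ and use nonexpansiveness of $J_{\lambda_j}^{n_j}$ and of $S(\tau)$ (property (i)) to absorb the error $\rho(x,x^i)$.

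Granting the generalized product formula, the identity (ii) follows by a single splitting. Fix $s,t\geq 0$ and $x\in\overline{\dom A}$, let $\lambda\tendsto 0^+$, and set $m:=\lfloor s/\lambda\rfloor$ and $n:=\lfloor t/\lambda\rfloor$, so that $m\lambda\tendsto s$, $n\lambda\tendsto t$, and $(m+n)\lambda\tendsto s+t$. Applying the formula, $J_\lambda^{m+n}x\tendsto S(s+t)x$ and $J_\lambda^{m}x\tendsto S(s)x=:y$. Writing $J_\lambda^{m+n}x = J_\lambda^{n}(J_\lambda^{m}x)$ and using nonexpansiveness of $J_\lambda^{n}$,
\[
\rho\big(J_\lambda^{n}(J_\lambda^{m}x), S(t)y\big) \leq \rho\big(J_\lambda^{m}x, y\big) + \rho\big(J_\lambda^{n}y, S(t)y\big),
\]
and both terms vanish as $\lambda\tendsto 0^+$ (the first since $J_\lambda^{m}x\tendsto y$, the second by the formula applied to $y\in\overline{\dom A}$). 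Hence $J_\lambda^{m+n}x\tendsto S(t)(S(s)x)$, and uniqueness of limits yields $S(s+t)x = S(t)S(s)x = S(s)S(t)x$.

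The main obstacle is the generalized product formula, specifically the need to compare iterates with \emph{different} step sizes: the stated estimate \eqref{eqn:semigroupestimate} only controls the diagonal sequence $J_{t/k}^k$, so I must reuse Lemma \ref{lem:doubleSeq} in the sharper off-diagonal form $\rho(J_{\lambda'}^{j}x, J_\lambda^{k}x)\leq\abs{Ax}\big(\sqrt{(k\lambda-j\lambda')^2+k\lambda^2}+\sqrt{(k\lambda-j\lambda')^2+\lambda j\lambda'}\,\big)$ that is already implicit in the proof of Theorem \ref{thm:S(t)x}. Once this comparison is in hand, everything else is bookkeeping with the triangle inequality and nonexpansiveness.
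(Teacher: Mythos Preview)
Your overall strategy is sound and takes a genuinely different route from the paper. The paper establishes the semigroup law by first proving $S(nt)=S(t)^{n}$ for all $n\in\N$ via induction (using $J_{t/k}^{nk}=J_{(nt)/(nk)}^{nk}$ together with nonexpansiveness), then deduces $S(s+t)=S(s)S(t)$ for positive rationals from this power law, and finally passes to all $s,t\geq 0$ by the continuity of $t\mapsto S(t)x$. You instead isolate a generalized product formula $J_{\lambda_j}^{n_j}x\to S(\tau)x$ whenever $\lambda_j\to 0^{+}$ and $n_j\lambda_j\to\tau$, and obtain the semigroup law in one stroke via the splitting $J_\lambda^{m+n}=J_\lambda^{n}\circ J_\lambda^{m}$ with $m=\lfloor s/\lambda\rfloor$, $n=\lfloor t/\lambda\rfloor$. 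Both arguments are standard; yours avoids the rational--real density step, while the paper's needs only the diagonal estimate \eqref{eqn:semigroupestimate} rather than the full off-diagonal comparison from Lemma \ref{lem:doubleSeq}.

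One correction is needed. The bound you display after ``letting $N\to\infty$ collapses the bound to'' is not what one actually gets. For large $N$ one has $\tau/N<\lambda_j$, so in your own off-diagonal inequality one must take $\lambda=\lambda_j$, $k=n_j$, $\lambda'=\tau/N$, $j=N$; this yields
\[
\rho\bigl(J_{\lambda_j}^{n_j}x,\,J_{\tau/N}^{N}x\bigr)\ \le\ \abs{Ax}\Bigl(\sqrt{(n_j\lambda_j-\tau)^{2}+n_j\lambda_j^{2}}+\sqrt{(n_j\lambda_j-\tau)^{2}+\lambda_j\tau}\Bigr),
\]
which is already independent of $N$; passing $N\to\infty$ leaves the residual terms $n_j\lambda_j^{2}$ and $\lambda_j\tau$ under the square roots and does \emph{not} produce $2\abs{Ax}\,\lvert\tau-n_j\lambda_j\rvert$. (Indeed, your displayed bound would force $J_{\lambda_j}^{n_j}x=S(\tau)x$ whenever $n_j\lambda_j=\tau$, contradicting \eqref{eqn:semigroupestimate}.) This does not damage your argument, since the residual terms also tend to $0$ as $\lambda_j\to 0^{+}$ and $n_j\lambda_j\to\tau$, so the generalized product formula still follows; just replace the incorrect collapsed bound with the one above.
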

\begin{proof}
The fact that $S(0) = I$, $S(t)$ is nonexpansive and $t \mapsto S(t)x$ is continuous on $[0,\infty)$ follows from $J_{0} = I$, $J_{t}$ is nonexpansive, $t \mapsto J_{t}x$ is continuous on $[0,\infty)$ and the uniform convergence of $\{J_{t/k}^{k} x\}$ on each bounded subinterval of $[0,\infty)$ for $x \in \overline{\dom A}$. It is therefore sufficient to show only $S(s+t) = S(s)S(t)$.

Let $t \geq 0$. We first show that $S(nt) = S(t)^{n}$ for all $n \in \N$. For $n =2$, we have
\begin{align*}
\rho(S(t)^{2}x,J_{t/k}^{2k}x) &\leq \rho(S(t)S(t)x,J_{t/k}^{k}S(t)x) + \rho(J_{t/k}^{k}S(t)x,J_{t/k}^{k}J_{t/k}^{k}x) \\
&\leq \rho(S(t)^{2}x,J_{t/k}^{k}S(t)x) + \rho(S(t)x,J_{t/k}^{k}x).
\end{align*}
Letting $k \tendsto \infty$, we have $S(t)^{2}x = \lim_{n} J_{t/k}^{2k} x = \lim_{n} J_{2t/2k}^{2k} x = S(2t)x$. Next, let $n \geq 3$ and suppose that $S((n-1)t) = S(t)^{n-1}$ holds true. We have
\[
\rho(S(t)^{n}x,J_{t/k}^{nk}x) \leq \rho(S(t)S(t)^{n-1}x,J_{t/k}^{k}S(t)^{n-1}x) + \rho(S(t)^{n-1}x,J_{t/k}^{(n-1)k}x).
\]
Again, letting $k \tendsto \infty$, we similarly have $S(t)^{n} = S(nt)$. The claim that $S(nt) = S(t)^{n}$ for all $n \in \N$ is thus proved.

Now, for two positive rationals $s := c/d$ and $t := p/q$ with $c,d,p,q \in \N$, we have
\begin{align*}
S(s+t) & = S\left(\frac{c}{d} + \frac{p}{q}\right) = S\left(\frac{cq + dp}{dq}\right) = S\left(\frac{1}{dq}\right)^{cq + dp} \\
& = S\left(\frac{1}{dq}\right)^{cq}S\left(\frac{1}{dq}\right)^{dp} = S\left(\frac{c}{d}\right)S\left(\frac{p}{q}\right) = S(s)S(t).
\end{align*}
The continuity of $t \mapsto S(t)$ at each $x \in \overline{\dom A}$ implies that $S(s+t) = S(s)S(t)$ for $s,t \geq 0$.
\end{proof}

The trajectory $(S(t)x)_{t \geq 0}$ can behave very unstably. However, if a stationary point exists, the trajectory is bounded. As is known from Hilbert space theory, $(S(t)x)_{t \geq 0}$ is not necessarily weakly convergent at all. The following result is our final result, and it shows that the mentioned trajectory is bounded if a stationary point exists and if all the accumulated points are contained in $A^{-1}\0$, we have its $\Delta$-convergence to a stationary.

\begin{thm}\label{thm:S(t)x-convergent}
Suppose that all the assumptions of Theorem \ref{thm:S(t)x} hold, $A^{-1}\0 \neq \emptyset$, and $\{S(t)\}_{t \geq 0}$ be the semigroup generated by $A$. Let $x \in \overline{\dom A}$. Then, $(S(t)x)_{t \geq 0}$ is bounded. Moreover, if all $\Delta$-accumulated points of $(S(t)x)_{t \geq 0}$ is contained in $A^{-1}\0$, then $(S(t)x)_{t \geq 0}$ is $\Delta$-convergent to a stationary point as $t \tendsto \infty$.
\end{thm}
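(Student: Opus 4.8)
The plan is to realize the entire argument as a single application of the Fej\'er monotonicity machinery in Proposition \ref{prop:Fejer} (in its net form, as licensed by the accompanying remark), taking $V := A^{-1}\0$. The first task is to record that every stationary point is a common fixed point of the whole semigroup. Indeed, if $p \in A^{-1}\0$, then by \ref{cdn:Fixed=Stationary} of Proposition \ref{prop:properties-J} we have $p \in \Fix(J_{\lambda})$ for every $\lambda > 0$; hence $J_{t/k}^{k}p = p$ for all $k \in \N$ and $t > 0$, and passing to the limit in the defining formula \eqref{eqn:S(t)x} gives $S(t)p = p$ (the case $t = 0$ being trivial since $S(0) = I$). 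Note that $p \in A^{-1}\0 \subset \overline{\dom A}$, so $S(t)p$ is legitimately defined by Theorem \ref{thm:S(t)x}. Thus $A^{-1}\0 \subset \Fix(S(t))$ for each $t \geq 0$.

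Next I would establish the Fej\'er monotonicity of the net $(S(t)x)_{t \geq 0}$ with respect to $A^{-1}\0$. Fix $p \in A^{-1}\0$ and $0 \leq s \leq t$. Using the semigroup property $S(t) = S(t-s)\circ S(s)$, the nonexpansivity of $S(t-s)$, and the fixed-point identity $S(t-s)p = p$ just obtained, I get
\[
\rho(S(t)x,p) = \rho\big(S(t-s)S(s)x,\,S(t-s)p\big) \leq \rho(S(s)x,p),
\]
which is precisely the defining inequality of Fej\'er monotonicity for a net. With this in hand, part (i) of Proposition \ref{prop:Fejer} (net version) immediately yields that $(S(t)x)_{t \geq 0}$ is bounded, proving the first assertion; alternatively, taking $s = 0$ above shows directly that the net lies in the closed ball of radius $\rho(x,p)$ about $p$.

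For the second assertion, the Fej\'er monotonicity with respect to $A^{-1}\0$ is already in place, and the standing hypothesis is exactly that every $\Delta$-accumulation point of the net belongs to $A^{-1}\0$. Invoking part (iii) of Proposition \ref{prop:Fejer} (again in its net form), I conclude that $(S(t)x)_{t \geq 0}$ is $\Delta$-convergent to some element of $A^{-1}\0$, i.e.\ to a stationary point, as $t \to \infty$.

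The argument is essentially bookkeeping once the two structural facts---that stationary points are fixed by the semigroup, and that this forces Fej\'er monotonicity---have been recorded, so there is no genuinely hard step. The only point requiring slight care is the passage from sequences to nets in Proposition \ref{prop:Fejer}; I would rely on the remark following that proposition, noting that the monotone convergence of $t \mapsto \rho(S(t)x,p)$ supplied by Fej\'er monotonicity lets the sequential proof carry over to the net verbatim.
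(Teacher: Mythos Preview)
Your argument is correct and follows essentially the same route as the paper: both establish Fej\'er monotonicity of $(S(t)x)_{t\ge 0}$ with respect to $A^{-1}\0$ via the semigroup property and nonexpansivity, then invoke the net version of Proposition~\ref{prop:Fejer}. Your write-up is in fact more careful in justifying $S(t)p = p$ for $p \in A^{-1}\0$ and in flagging the passage from sequences to nets.
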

\begin{proof}
Suppose that $p \in A^{-1}\0$. Then, it is a fixed point to all $S(t)$'s. Therefore, we have
\[
\rho(p,S(t)x) \leq \rho(p,x), \quad \forall t \geq 0.
\]
This shows the boundedness, and it implies that a $\Delta$-accumulation point of $(S(t)x)_{t \geq 0}$ exists. Assume that every $\Delta$-accumulation points of $(S(t)x)_{t \geq 0}$ is contained in $A^{-1}\0$. Then, we have
\[
\rho(p,S(t)x) = \rho(S(t-s)p,S(t-s)S(s)x) \leq \rho(p,S(s)x),
\]
for any $0 \leq s \leq t$. Hence, $(S(t)x)_{t \geq 0}$ is Fej\'er monotone with respect to $A^{-1}\0$. By the hypothesis and Proposition \ref{prop:Fejer}, we conclude that $(S(t)x)_{t \geq 0}$ is $\Delta$-convergent to a stationary point of $A$ as $t \tendsto \infty$.
\end{proof}

\section*{Conclusions}

In this paper, we utilized the concept of tangent spaces to develop a theory on monotone vector fields and generalized gradient flows over complete $\CAT(0)$ spaces. This approach is a natural extension of the theory from the frameworks of Hilbert spaces and Hadamard manifolds. Important instruments in our studies include resolvents and Yosida approximations. Among others, we obtained the resolvent identity (see in Proposition \ref{prop:properties-J}), the convexity of $\overline{\dom A}$ for a monotone vector field $A$ (Proposition \ref{prop:convexity}), asymptotic convergence for the resolvents (Theorems \ref{thm:Asymp0} and \ref{thm:asymp_infty}) and derived a useful estimate for the Yosida approximations (Proposition \ref{prop:HeartWarmingIneq}). Finally, we used such devices to establish a generation theorem of nonexpansive semigroup (Theorem \ref{thm:S(t)x}) which improves and generalizes, to some extents, results of Jost \cite[Theorem 1.3.13]{MR1652278}, Mayer \cite[Theorem 1.13]{MR1651416}, Stojkovic \cite[Theorem 2.18]{MR2879568}, Ba\v{c}\'{a}k \cite[Theorem 1.5]{MR3047087}, and Iwamiya and Okochi \cite[Theorem 4.1]{MR1979730}. The error estimate for the generation is also given in terms of Yosida approximations up to any given accuracy and time.

We also propose the following open questions which are yet to be considered from a viewpoint of our paper.
\begin{enumerate}[label=Q\arabic*.]
\item Does the surjectivity condition always hold for a maximally monotone vector field?
\item Is it possible to drop the geodesic extension property in all definitions and results, where the Yosida approximations are involved?
\end{enumerate}


\section*{Acknowledgements}

The second author was supported by JSPS KAKENHI Grant No. 17K05372. The third author was supported by he Thailand Research Fund (TRF) and the King Mongkut’s University of Technology Thonburi (KMUTT) under the TRF Research Scholar Award (Grant No. RSA6080047).

\renewcommand\bibname{References}
\bibliographystyle{abbrv}
\bibliography{vfflow}

\begin{thebibliography}{10}

\bibitem{MR3003694}
B.~Ahmadi~Kakavandi.
\newblock Weak topologies in complete {$\rm CAT(0)$} metric spaces.
\newblock {\em Proc. Amer. Math. Soc.}, 141(3):1029--1039, 2013.

\bibitem{MR2680038}
B.~Ahmadi~Kakavandi and M.~Amini.
\newblock Duality and subdifferential for convex functions on complete {${\rm
  CAT}(0)$} metric spaces.
\newblock {\em Nonlinear Anal.}, 73(10):3450--3455, 2010.

\bibitem{MR3206460}
D.~Ariza-Ruiz, L.~Leu\c{s}tean, and G.~L\'opez-Acedo.
\newblock Firmly nonexpansive mappings in classes of geodesic spaces.
\newblock {\em Trans. Amer. Math. Soc.}, 366(8):4299--4322, 2014.

\bibitem{MR3047087}
M.~Ba\v{c}\'ak.
\newblock The proximal point algorithm in metric spaces.
\newblock {\em Israel J. Math.}, 194(2):689--701, 2013.

\bibitem{MR3241330}
M.~Ba\v{c}\'ak.
\newblock {\em Convex analysis and optimization in {H}adamard spaces},
  volume~22 of {\em De Gruyter Series in Nonlinear Analysis and Applications}.
\newblock De Gruyter, Berlin, 2014.

\bibitem{MR1744486}
M.~R. Bridson and A.~Haefliger.
\newblock {\em Metric spaces of non-positive curvature}, volume 319 of {\em
  Grundlehren der Mathematischen Wissenschaften [Fundamental Principles of
  Mathematical Sciences]}.
\newblock Springer-Verlag, Berlin, 1999.

\bibitem{MR1835418}
D.~Burago, Y.~Burago, and S.~Ivanov.
\newblock {\em A course in metric geometry}, volume~33 of {\em Graduate Studies
  in Mathematics}.
\newblock American Mathematical Society, Providence, RI, 2001.

\bibitem{MR3691338}
P.~Chaipunya and P.~Kumam.
\newblock On the proximal point method in {H}adamard spaces.
\newblock {\em Optimization}, 66(10):1647--1665, 2017.

\bibitem{MR0287357}
M.~G. Crandall and T.~M. Liggett.
\newblock Generation of semi-groups of nonlinear transformations on general
  {B}anach spaces.
\newblock {\em Amer. J. Math.}, 93:265--298, 1971.

\bibitem{MR2232680}
S.~Dhompongsa, W.~A. Kirk, and B.~Sims.
\newblock Fixed points of uniformly {L}ipschitzian mappings.
\newblock {\em Nonlinear Anal.}, 65(4):762--772, 2006.

\bibitem{MR1979730}
T.~Iwamiya and H.~Okochi.
\newblock Monotonicity, resolvents and {Y}osida approximations of operators on
  {H}ilbert manifolds.
\newblock {\em Nonlinear Anal.}, 54(2):205--214, 2003.

\bibitem{MR1360608}
J.~Jost.
\newblock Convex functionals and generalized harmonic maps into spaces of
  nonpositive curvature.
\newblock {\em Comment. Math. Helv.}, 70(4):659--673, 1995.

\bibitem{MR1652278}
J.~Jost.
\newblock Nonlinear {D}irichlet forms.
\newblock In {\em New directions in {D}irichlet forms}, volume~8 of {\em AMS/IP
  Stud. Adv. Math.}, pages 1--47. Amer. Math. Soc., Providence, RI, 1998.

\bibitem{MR3679017}
H.~Khatibzadeh and S.~Ranjbar.
\newblock Monotone operators and the proximal point algorithm in complete
  {${\rm Cat}(0)$} metric spaces.
\newblock {\em J. Aust. Math. Soc.}, 103(1):70--90, 2017.

\bibitem{MR2754196}
Y.~Kimura.
\newblock Convergence of a sequence of sets in a {H}adamard space and the
  shrinking projection method for a real {H}ilbert ball.
\newblock {\em Abstr. Appl. Anal.}, pages Art. ID 582475, 11, 2010.

\bibitem{MR2416076}
W.~A. Kirk and B.~Panyanak.
\newblock A concept of convergence in geodesic spaces.
\newblock {\em Nonlinear Anal.}, 68(12):3689--3696, 2008.

\bibitem{MR840217}
Y.~K\=omura and Y.~Konishi.
\newblock {\em Hisenkei hatten hoteishiki}, volume~16 of {\em Iwanami Shoten
  Kiso S\=ugaku [Iwanami Lectures on Fundamental Mathematics]}.
\newblock Iwanami Shoten, Tokyo, second edition, 1983.
\newblock Kaisekigaku (II) [Analysis (II)], vii.

\bibitem{MR2506692}
C.~Li, G.~L\'opez, and V.~Mart\'\i~n M\'arquez.
\newblock Monotone vector fields and the proximal point algorithm on {H}adamard
  manifolds.
\newblock {\em J. Lond. Math. Soc. (2)}, 79(3):663--683, 2009.

\bibitem{MR1651416}
U.~F. Mayer.
\newblock Gradient flows on nonpositively curved metric spaces and harmonic
  maps.
\newblock {\em Comm. Anal. Geom.}, 6(2):199--253, 1998.

\bibitem{MR1694468}
S.~Z. N\'emeth.
\newblock Monotone vector fields.
\newblock {\em Publ. Math. Debrecen}, 54(3-4):437--449, 1999.

\bibitem{MR1317739}
I.~Nikolaev.
\newblock The tangent cone of an {A}leksandrov space of curvature {$\leq K$}.
\newblock {\em Manuscripta Math.}, 86(2):137--147, 1995.

\bibitem{MR2879568}
I.~Stojkovic.
\newblock Approximation for convex functionals on non-positively curved spaces
  and the {T}rotter-{K}ato product formula.
\newblock {\em Adv. Calc. Var.}, 5(1):77--126, 2012.

\bibitem{MR2720608}
J.~H. Wang, G.~L\'opez, V.~Mart\'{\i}n-M\'arquez, and C.~Li.
\newblock Monotone and accretive vector fields on {R}iemannian manifolds.
\newblock {\em J. Optim. Theory Appl.}, 146(3):691--708, 2010.

\bibitem{MR1336382}
K.~Yosida.
\newblock {\em Functional analysis}.
\newblock Classics in Mathematics. Springer-Verlag, Berlin, 1995.
\newblock Reprint of the sixth (1980) edition.

\end{thebibliography}

\end{document}